\documentclass[11pt]{amsart}
\usepackage{graphicx}
\usepackage{amssymb}
\usepackage[usenames,dvipsnames]{color}
\DeclareGraphicsRule{.tif}{png}{.png}{`convert #1 `dirname #1`/`basename #1 .tif`.png}

\setlength{\parindent}{0.25in}

\numberwithin{equation}{section} 

\newtheorem{theorem}{Theorem}[section]
\newtheorem{lemma}[theorem]{Lemma}

\newtheorem{corollary}[theorem]{Corollary}

\newtheorem{remark}[theorem]{Remark}
\newtheorem{definition}{Definition}[section]


\def\R{{\mathbb R}}

\def\cL{{\mathcal L}}
\def\cM{{\mathcal M}}

\def\a{\alpha}
\def\b{\beta}
\def\e{\varepsilon}
\def\d{\delta}
\def\D{\Delta}
\def\g{\gamma}
\def\G{\Gamma}
\def\k{\kappa}
\def\l{\lambda}
\def\L{\Lambda}

\def\p{\partial}
\def\r{\rho}
\def\s{\sigma}
\def\t{\tau}
\def\w{\omega}
\def\W{\Omega}

\def\1{\left(}
\def\2{\right)}
\def\3{\left\{}
\def\4{\right\}}
\def\8{\infty}
\def\sm{\setminus}
\def\ss{\subseteq}

\DeclareMathOperator*{\supp}{supp}
\DeclareMathOperator*{\dist}{dist}


\title{Regularity for solutions of non local parabolic equations II}
\author[H. Chang Lara]{H\'ector Chang Lara}
\address{%
University of Texas at Austin\\
Department of Mathematics\\
1 University Station C1200\\
Austin, TX 78712
}
\email{hchang@math.utexas.edu}
\author[G. D\'avila]{Gonzalo D\'avila}
\address{%
University of British Columbia\\
Department of Mathematics\\
1984 Mathematics Road\\
Vancouver, B.C. Canada V6T 1Z2 
}
\email{gdavila@math.ubc.ca}

\begin{document}
\begin{abstract}
We prove boundary regularity and a compactness result for parabolic nonlocal equations of the form $u_t-Iu=f$, where the operator $I$ is not necessarily translation invariant. As a consequence of this and the regularity results for translation invariant case, we obtain $C^{1,\a}$ interior estimates in space for non translation invariant operators under some hypothesis on the time regularity of the boundary data.
\end{abstract}

\maketitle


\section{Introduction}
In this work we are interested in studying regularity of solutions of 
\begin{align}\label{eqintro}
u_t(x,t)-Iu(x,t)=f(x) \text{ in } B_1\times(-1,0],
\end{align}
where $I$ may be given by
\begin{align*}
Iu(x,t)=\inf\limits_\a\sup\limits_\b\int_{\R^n}(u(x+y,t)+u(x-y,t)-2u(x,t))K_{\a,\b}(x,t;y)dy,
\end{align*}
and $K_{\a,\b}(x,t;\cdot)$ is a kernel comparable to the kernel of the fractional laplacian of order $\s\in(0,2)$. Equations of the form \eqref{eqintro} appear naturally when studying the evolution of a jump process and stochastic control problems. 

The study of the regularity of solutions to \eqref{eqintro} for the fully non linear case started in \cite{CD}, where the authors proved H\"older estimates in the non translation invariant setting. In the case of the translation invariant case, i.e., $f=0$, $K(x,t;y)=K(y)$ the authors prove, under some suitable hypothesis on the kernels, that the spatial gradient of the solutions of the equation \eqref{eqintro} had interior H\"older estimates. 

The variational problem was studied by L. Caffarelli, C. Chan and A. Vasseur in \cite{CHV} by using De Giorgi's technique. Also in a recent work from M. Felsinger and M. Kassmann \cite{FK} they prove a Harnack inequality in the divergence case, using Moser's technique. As consequence they obtain H\"older interior regularity. Moreover, their result is stable under the order of the equation, so they recover the local case as a limit.

In the elliptic case, L. Caffarelli and L. Silvestre studied in \cite{C2} regularity of solutions for non translation invariant operators. They prove that if an operator $I_1$ is close to an operator $I_2$, and $I_2$ is a translation invariant operator with $C^{1,\a}$ a priori estimates, then solutions of $I_1u=f$ also have $C^{1,\tilde \a}$ estimates for some $\tilde\a\leq\a$. The method relies on a perturbative argument and a compactness result. As a consequence they recover classical Cordes-Nirenberg type of estimates under some extra assumptions on the operator.

In this work we extend the results of \cite{C2} to the parabolic case. The main theorem (Theorem \ref{C1a theorem}) states that solutions of \eqref{eqintro}, with some regularity assumptions in the time variable for the boundary data, have interior $C^{1,\tilde \a}$ estimates in space, if $I$ is close to a translation invariant operator with interior $C^{1,\a}$ estimates and $\|f\|_\8$ is small enough. We also prove further regularity in time under a different set of hypothesis. 

The paper is divided as follows. In Section \ref{VSP} we introduce the family of operators we are considering, the notion of viscosity solution, introduce the norm for the operators $I$ and recall some previous results from \cite{CD}. In Section \ref{SBR} we prove boundary regularity for solutions of \eqref{eqintro}. To achieve this we use the barriers from \cite{CD} and some careful estimates. In Section \ref{SFRT}, we prove that, under some extra hypothesis on the boundary data, solutions of \eqref{eqintro} have $C^{1,\a}$ estimates in time, even for rough kernels. In Section \ref{SSC} stability and compactness results are proven. This section is key in order to proceed with the perturbative method from Section \ref{C1a section}. Finally, in Section \ref{C1a section} we prove our main result. We also present some applications to linear and non linear equations with variable coefficients.

\section{Preliminaries}\label{VSP}

The (spatial) linear operators $L_K$ we are interested are of the form,
\begin{align*}
L_Ku(x,t)&=\int\d(u,x,t;y)K(x,t;y)dy,\\
\d(u,x,t;y)&=u(x+y,t)+u(x-y,t)-2u(x,t)
\end{align*}
with $K$ non negative, even in $y$ and satisfying the following integrability condition,
\begin{align}\label{intcond}
\int K(x,t;y)\min(1,|y|^2)dy<\infty.
\end{align}

A sufficient regularity and integrability that we need to ask to the function $u$ in order to make the integral above convergent is $u(\cdot,t) \in C^{1,1}(x) \cap L^\8(\R^n)$. The functional space $C^{1,1}$ consists of all the functions for which there exists a vector $p\in\R^n$ such that $\limsup_{|y|\to0}|u(x+y)-u(x)-p\cdot y|/|y|^2 < \8$ and in particular $\limsup_{|y|\to0}|\d(u,x,t;y)|/|y|^2 < \8$.

Most of the time we will consider kernels $K(x,t;y)$ that are controlled by a weight $\w \in L^1$ away from the singularity. This means that
\begin{align*}
\sup_{\R^n\sm B_r} \frac{K(x,t;y)}{\w(y)}<\8 \text{ for every radius $r>0$}.
\end{align*}
In this case the boundedness of $u$ can be relaxed for $u$ being in $L^1(\w)$ which consist of all the functions integrable against $\w$.


\subsection{Basic properties of the operators}

Combinations by inf-sup operations of the linear operators introduced above satisfy several important properties which we resume in the following Lemma.

\begin{lemma}\label{defisok}
Let $\{K_{\a,\b}\}_{(\a,\b)\in A\times B}$ be a family of kernels satisfying the following hypothesis:
\begin{enumerate}
\item Uniform integrability condition:
\begin{align*}
\sup_{\substack{\a,\b\in A\times B,\\(x,t)\in\W\times(-T,0]}} \int K_{\a,\b}(x,t;y)\min(1,|y|^2)dy < \8.
\end{align*}
\item Equicontinuity: For every $(x_0,t_0) \in \W\times(-T,0]$,
\begin{align*}
\lim_{(x,t)\to(x_0,t_0^-))}\sup_{\a,\b\in A\times B}\left|\int\1K_{\a,\b}(x,t;y)-K_{\a,\b}(x_0,t_0;y)\2\min(1,|y|^2)dy \right| = 0
\end{align*}
\item There exists a weight $\w \in L^1$ such that the kernels are uniformly controlled by $\w(y)$ away from the singularity,
\begin{align*}
\sup_{\substack{\a,\b\in A\times B,\\(x,t)\in\W\times(-T,0]\\y\in\R^n\sm B_r}} \frac{K_{\a,\b}(x,t;y)}{\w(y)} < \8 \text{ for every radius $r>0$}. 
\end{align*}
\item For every $x\in\R^n$,
\begin{align*}
\sup_{y\in\R^n}\frac{\w(y-x)}{\w(y)} < \8.
\end{align*}
\end{enumerate}
Let $I$ be defined as the following inf-sup combination of linear operators:
\begin{align*}
Iu(x,t) = \inf_{\a\in A}\sup_{\b\in B} L_{K_{\a,\b}}u(x,t) = \inf_\a\sup_\b \int \d(u,x,t;y)K_{\a,\b}(y)dy,
\end{align*}
Then $I$ is a continuous operator $\W\times(-T,0]$ elliptic with respect to $\{L_{K_{\a,\b}}\}$. Moreover, if each $K_{\a,\b}(x,t;y) = K_{\a,\b}(y)$ is independent of $(x,t) \in \W\times(-T,0]$ then $I$ is also translation invariant.
\end{lemma}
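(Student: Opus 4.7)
The plan is to reduce the claim to a uniform-in-$(\a,\b)$ continuity estimate for the linear operators $L_{K_{\a,\b}}$, and then use that $\inf$ and $\sup$ preserve moduli of continuity. Translation invariance in the case $K_{\a,\b}(x,t;y)=K_{\a,\b}(y)$ is immediate from a change of variables inside each $L_{K_{\a,\b}}$, and ellipticity with respect to $\{L_{K_{\a,\b}}\}$ follows from the pointwise inequality
\[
\Big|\inf_\a\sup_\b a_{\a\b}-\inf_\a\sup_\b b_{\a\b}\Big|\leq\sup_{\a,\b}|a_{\a\b}-b_{\a\b}|
\]
applied with $a_{\a\b}=L_{K_{\a,\b}}u$, $b_{\a\b}=L_{K_{\a,\b}}v$. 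So the bulk of the work is the continuity estimate.

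For that estimate, fix a test function $u\in C^{1,1}(x)\cap L^1(\w)$ and a point $(x_0,t_0)\in\W\times(-T,0]$, and decompose
\begin{align*}
L_{K_{\a,\b}}u(x,t)-L_{K_{\a,\b}}u(x_0,t_0)
&=\int\1\d(u,x,t;y)-\d(u,x_0,t_0;y)\2 K_{\a,\b}(x,t;y)\,dy \\
&\quad+\int\d(u,x_0,t_0;y)\1K_{\a,\b}(x,t;y)-K_{\a,\b}(x_0,t_0;y)\2\,dy.
\end{align*}
The $C^{1,1}\cap L^\8$ regularity of $u$ gives $|\d(u,x_0,t_0;y)|\leq C(u)\min(1,|y|^2)$, so the second integral is bounded by $C(u)\int|K_{\a,\b}(x,t;y)-K_{\a,\b}(x_0,t_0;y)|\min(1,|y|^2)\,dy$ and vanishes as $(x,t)\to(x_0,t_0^-)$ uniformly in $(\a,\b)$ by hypothesis (2). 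For the first integral I would split $\R^n$ into $B_r$, $B_R\sm B_r$, and $\R^n\sm B_R$. On $\R^n\sm B_R$, hypothesis (3) gives $K_{\a,\b}\leq C\w$; hypothesis (4) together with $u\in L^1(\w)$ then makes the $\w$-tail of $|u(x\pm y,t)|$ small uniformly in $(x,t,\a,\b)$ for large $R$. On the annulus $B_R\sm B_r$, $\d(u,\cdot,\cdot;y)$ is uniformly continuous (by continuity of $u$) and $K_{\a,\b}\leq C\w$ has uniformly bounded mass on the annulus, so this piece vanishes as $(x,t)\to(x_0,t_0)$. On the small ball $B_r$ the $C^{1,1}$ bound gives $|\d-\d_0|\leq 2C(u)|y|^2$, and I would use (2) together with the fact that $|y|^2=\min(1,|y|^2)$ on $B_r$ (for $r\leq 1$) to compare $\int_{B_r}|y|^2K_{\a,\b}(x,t;y)\,dy$ with $\int_{B_r}|y|^2K_{\a,\b}(x_0,t_0;y)\,dy$, reducing to a smallness statement at the fixed reference point.

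The main obstacle I anticipate is exactly this small-ball piece: hypothesis (1) only gives a uniform \emph{bound} on $\int\min(1,|y|^2)K_{\a,\b}(x_0,t_0;y)\,dy$, not uniform absolute continuity in $(\a,\b)$ as $r\to 0$. In the concrete setting where the kernels are comparable to the fractional Laplacian of a fixed order $\s\in(0,2)$, the pointwise bound $K_{\a,\b}(x_0,t_0;y)\lesssim|y|^{-n-\s}$ yields $\int_{B_r}|y|^2K_{\a,\b}\,dy\lesssim r^{2-\s}\to 0$ uniformly in $(\a,\b)$, which is how I expect the uniformity is actually obtained; at the level of generality of the hypotheses stated one must extract it from additional structure of the family or refine the splitting accordingly. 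Once the uniform modulus of continuity of $L_{K_{\a,\b}}u$ at $(x_0,t_0)$ is in hand, the inf-sup estimate from the first paragraph upgrades it to continuity of $Iu$, completing the proof.
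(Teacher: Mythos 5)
Your overall plan---reduce to a uniform modulus of continuity for $L_{K_{\a,\b}}u$, then use the inf-sup Lipschitz inequality to transfer it to $I$, with ellipticity and translation invariance being immediate---is the paper's strategy, and your Leibnitz decomposition into $\int(\d_1-\d_0)K_1$ plus $\int\d_0(K_1-K_0)$ is exactly the paper's. The handling of the $\d_0(K_1-K_0)$ term via hypothesis (2) and of the large-$|y|$ tail via (3)--(4) and $L^1(\w)$ is also correct.

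The genuine gap is in the class of functions you test against. You fix a generic $u\in C^{1,1}(x)\cap L^1(\w)$ and then, correctly, hit a wall on the small-ball piece $\int_{B_r}(\d_1-\d_0)K_1$: hypothesis (1) is a uniform bound, not uniform absolute continuity, so $\int_{B_r}|y|^2K_{\a,\b}(x,t;y)\,dy$ need not be small uniformly in $(\a,\b)$ as $r\to 0$. But this obstacle is an artifact of testing against the wrong class. In this paper, a nonlocal operator is declared \emph{continuous} if it sends every $(v,B_r(x)\times(t-\t,t])\in S$ to a function continuous on $B_{r}(x)\times(t-\t,t]$, where $v$ is a quadratic parabolic polynomial inside $B_r(x)\times(t-\t,t]$ and lies in $C(t-\t,t;L^1(\w))$. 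For such a $v$, one has $\d(v,x,t;y)=2\sum_{ij}a_{ij}y_iy_j$ for $|y|$ small, which is independent of both $x$ and $t$; hence $\d_1-\d_0\equiv 0$ near the origin and the small-ball piece vanishes identically. There is nothing to estimate there, and hypothesis (1) is only needed to make $Iu(x,t)$ well-defined and uniformly bounded in $(\a,\b)$. You should also note that the $C(t-\t,t;L^1(\w))$ continuity in time of the test function is what handles the time-only perturbation $x_1=x_0$, $t_1\to t_0^-$, which the paper treats as a separate step; this continuity-in-time ingredient does not appear in your write-up. So: restrict the argument to $S$, observe the small-ball cancellation, keep your tail estimates, and add the $C(t-\t,t;L^1(\w))$ step for the time direction, and you recover the paper's proof.
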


%

The last condition on $\w$ is important to bound $\|u(x+\cdot,t)\|_{L^1(\w)}$ given that $\|u(\cdot,t)\|_{L^1(\w)}<\8$. To be precise,
\begin{align*}
\|u(x+\cdot,t)\|_{L^1(\w)} &= \int |u(x+y,t)|w(y)dy,\\
&= \int |u(y,t)|w(y-x)dy,\\
&\leq \1\sup_{y\in\R^n}\frac{\w(y-x)}{\w(y)}\2 \|u\|_{L^1(\w)} < \8.
\end{align*}

The proof of the Lemma will be given after we define precisely what does it mean to be continuous, translation invariant and elliptic.


\subsubsection{Continuous operators}

Classically we understand that an operator is continuous if, when tested against a smooth test function, returns a continuos function. For the non local setting we need to consider not only functions which are locally smooth but also with tails such that their integrals are continuous. The continuity in space is expected from because the operator is computed by a convolution integral however, the continuity in time has to be imposed in some way.

\begin{definition}
Let $C(a,b;L^1(\w))$ be the space of function $u:(a,b]\to\R$ such that
\begin{enumerate}
\item[(i)] for every $t\in(a,b]$, $u(\cdot,t)\in L^1(\w)$,
\item[(ii)] for every $t_2 \in (a,b]$, $\|u(\cdot,t_1)-u(\cdot,t_2)\|_{L^1(\w)} \to 0$ as $t_1\to t_2^-$.
\end{enumerate}
It comes additionally with the norm,
\begin{align*}
\|u\|_{C(a,b;L^1(\w))} = \sup_{t\in(a,b]}\|u(\cdot,t)\|_{L^1(\w)}.
\end{align*}
\end{definition}

The space $C(a,b;L^1(\w))$ with the topology given by its norm is a separable Banach space. This will be used in the compactness arguments in Section \ref{SSC}.

The space of functions against which we test the continuity of $I$ are given by parabolic second order polynomials and functions in $C(a,b;L^1(\w))$.

\begin{definition}
The space $S = S(\W\times(-T,0])$ of test functions is the set of all pairs $(v,B_r(x)\times(t-\t,t])$ such that $v \in C(t-\t,t;L^1(\w))$, $B_r(x)\times(t-\t,t] \ss \W\times(-T,0]$ and $v$ restricted to $B_r(x)\times(t-\t,t]$ is a quadratic parabolic polynomial, i.e.
\begin{align*}
v(x,t) = \sum_{i,j=1}^n a_{i,j}x_ix_j + \sum_{i=1}^n b_ix_i + ct + d. 
\end{align*}
\end{definition}

We say that a non local operator $I$ is continuous if for every $(v,B_r(x)\times(t-\t,t]) \in S$, we have that $Iu$ is a continuous function in $B_r(x)\times(t-\t,t]$ (with respect to the parabolic topology).

Notice that without the assumption coming from $C(a,b;L^1(\w))$ even the fractional laplacian can give us troubles in terms of continuity. Take for example $u$ equal to zero in $B_1\times(-1,0]$ and let vary $u$ freely outside $B_1\times(-1,0]$ to get a discontinuous function in the time variable, $f = u_t- \Delta^{\sigma/2} u$.


\subsubsection{Translation invariant operators}

In order to introduce the translation invariant operators we consider the shift $\t_{(x,t)}$ acting over a function $u$,
\begin{align*}
(\t_{(x,t)}u)(y,s) = u(y+x,s+t). 
\end{align*}
We say that an operator $I$ is translation invariant if for every $(y,s) \in \R^n\times\R$ and every $(x,t) \in \R^n\times\R$ where $I$ can be evaluated, we have then that $I$ can also be evaluated for $\t_{(x-y,t-s)}u$ at $(y,s)$ and
\begin{align*}
I(\t_{(x-y,t-s)}u)(y,s) = Iu(x,t).
\end{align*}

Whenever $I$ is not necessarily translation invariant we may also write $I(u,x,t)(y,s)$ when we want to ``freeze the coefficients of $I$ at $(x,t)$'' and evaluate at a given function $u$,
\begin{align*}
I(u,x,t)(y,s) := I(\t_{(y-x,s-t)}u)(x,t).
\end{align*}
When $I$ is translation invariant $I(u,x,t)(y,s) = Iu(y,s)$ always holds. Also $J = I(\cdot,x_0,t_0)$ is a translation invariant operator because,
\begin{align*}
J(\t_{(x-y,t-y)}u)(y,s) &= I(\t_{(x-y,t-s)}u,x_0,t_0)(y,s) = I(\t_{(x-x_0,t-t_0)}u)(x_0,t_0),
\end{align*}
and also
\begin{align*}
Ju(x,t) = I(u,x_0,t_0)(x,t) = I(\t_{(x-x_0,t-t_0)}u)(x_0,t_0).
\end{align*}


\subsubsection{Ellipticity.}

The notion of ellipticity traditionally refers to some positivity condition on the operator. In the definition given in \cite{C1} this is achieved by controlling the non linear operators by a family of linear ones which already have the positivity condition established as a lower bound for the kernels. We give next the exact same definition however the positivity requirement does not appear at this moment. Later on we will also impose a positive lower bound to the family of kernels controlling $I$.

We say that a fully non linear operator $I$ is elliptic with respect to a class $\cL$ of linear operators if for every $x$ in the domain of $I$ ($t$ is fixed and ommited),
\begin{align}\label{ellipticity}
\cM^-_{\cL}(u-v)(x) \leq Iu(x)-Iv(x) \leq \cM^+_{\cL}(u-v)(x),
\end{align}
where the extremal operators $\cM^\pm_\cL$ are defined in the following way,
\begin{align*}
\cM_\cL^+w(x) &= \sup_{L\in\cL}Lu(x),\\
\cM_\cL^-w(x) &= \inf_{L\in\cL}Lu(x).
\end{align*}


\subsubsection{Proof of Lemma \ref{defisok}}

Let $\a$ and $\b$ be fixed. For $u(\cdot,t) \in C^{1,1}(x)\cap L^1(\w)$ there is some radius $r$ and a positive constant $M$ such that 
\[
|\d(u,x,t;y)| \leq M|y|^2,\ \hbox{in}\ B_r.
\]
This allows us to estimate,\begin{align*}
\int|\d(u,x,t;y)K_{\a,\b}| &= \int_{B_r}M|y|^2 K_{\a,\b}dy\\
&{} + C\int_{\R^n\sm B_r}|u(x+y,t)+u(x-y,t)-2u(x,t)|\w dy,\\
&\leq C(M+\|u(\cdot,t)\|_{L^1(\w)} + |u(x,t)|).
\end{align*}
This implies that $L_{\a,\b}u(x,t)$ is well defined and bounded, uniformly in $(\a,\b)$, therefore $Iu(x,t)$ is well defined.

It is immediate from the definition that $I$ is elliptic with respect to $\cL = \{L_{K_{\a,\b}}\}$ and is translation invariant if each kernel $K_{\a,\b}(x,t;y) = K_{\a,\b}(y)$ is independent of $(x,t) \in \W\times(-T,0]$.

To check continuity we need to test $I$ against test functions $(v,B_r(x_0)\times(t_0-\t,t_0]) \in S$. 
It would follow if we show that for each $(\a,\b)\in A\times B$ we have that the following limit holds uniformly in $(\a,\b)$,
\begin{align*}
\lim_{(x,t)\to(x_0,t_0^-)} L_{K_{\a,\b}}v(x,t) = L_{K_{\a,\b}}v(x_0,t_0).
\end{align*}
Let $(x_1,t_1)\in B_r(x_0)\times(t_0-\t,t_0]$ and denote,
\begin{align*}
\d_0(y) &= \d(v,x_0,t_0;y),\\
\d_1(y) &= \d(v,x_1,t_1;y),\\
K_0(y) &= K_{\a,\b}(x_0,t_0;y),\\
K_1(y) &= K_{\a,\b}(x_1,t_1;y).
\end{align*}
By the Leibnitz formula,
\begin{align*}
L_{K_{\a,\b}}v(x_1,t_1) - L_{K_{\a,\b}}v(x_0,t_0) &= \int (\d_1-\d_0)K_1 + \d_0(K_1-K_0)dy.
\end{align*}
The second term in the integral goes to zero, uniformly in $(\a,\b)$, because of the equicontinuity of the kernels. For the first term we do it in two steps, if $t_1 = t_0$ and if $x_1 = x_0$. The whole equicontinuity follows then by the triangular inequality.

Consider $t_1 = t_0$. In this case we can just forget about the time variable which has been fixed. Since $L_{K_1}$ is now translation invariant the continuity in this case follows because then the definition is equivalent to the one in \cite{C1}.

Consider now $x_1=x_0$. We use the fact that $v\in C(t_0-\t,t_0,L^1(\w))$
\begin{align*}
\left|\int (\d_1-\d_0)K_1\right| &\leq C(\|v(\cdot+x,t_1)-v(\cdot+x,t_0)\|_{L^1(\w)}\\
&{} + \|v(\cdot-x,t)-v(\cdot-x,s)\|_{L^1(\w)}\\
&{} + \|v(\cdot,t)-v(\cdot,s)\|_{L^1(\w)}),\\
&\leq C\|v(\cdot,t)-v(\cdot,s)\|_{L^1(\w)}.
\end{align*}
It then goes to zero independently of $(\a,\b)$. This concludes the equicontinuity and the proof of the Lemma.

\subsection{Sufficient hypothesis in our proofs}

One important feature of the classical regularity theory of non divergence equations is that the estimates hold at every scale by having an scale invariance property. Next, we briefly discuss what does this means in our case. 

\subsubsection{Scaling}

Consider a smooth bounded function $u$ and a operator $I$ such that
\begin{align*}
u_t - Iu = f \text{ in $\W\times(-T,0]$}.
\end{align*}
If we rescale $u$ by $u_{\a,\b,\g}(x,t) = \a u(\b x,\g t)$ then the equation gets rescaled in the following way,
\begin{align*}
(u_{\a,\b,\g})_t - I_{\a,\b,\g}u_{\a,\b,\g} = f_{\a,\b,\g} \text{ in $\b^{-1}\W\times[-\g^{-1}T,0]$}, 
\end{align*}
where
\begin{align*}
(I_{\a,\b,\g}v)(x,t) &= \g\a I(\a^{-1}v(\b^{-1}\cdot,\g^{-1}\cdot))(\b x,\g t),\\
f_{\a,\b,\g}(x,t) &= \g\a f(\b x,\g t).
\end{align*}

If $I=L$ is linear with kernel $K$ then the equation for $u_{\a,\b,\g}$ is also linear and can be obtained by the change of variable formula. The kernel $K_{\a,\b,\g}$ for $L_{\a,\b,\g}$ is given by
\begin{align*}
K_{\a,\b,\g}(x,t;y) = \g\b^n K(\b x,\g t; \b y).
\end{align*}


\subsubsection{Operators comparable to the fractional laplacian.}

There will be two important families of linear operators we will frequently use. They are denoted by $\cL_0(\s,\L) \supseteq \cL_1(\s,\L)$ and depend on parameters $\s \in (0,2)$ and $\L \geq 1$. For all of them, $\w$ will be fixed to be $w(y)=1/(1+|y|^{n+\s})$ or $w(y)=1/(1+|y|^{n+\s_0})$ for some $0 < \s_0 \leq \s$ if we want to consider all the equations of order between $\s_0$ and $2$.

$\cL_0$ consists of all the linear, translation invariant, operators $L$ such that their kernels $K$ are comparable to the kernel for the laplacian of order $\s$,
\begin{align}
(2-\s)\frac{\L^{-1}}{|y|^{n+\s}}\leq K(y) \leq (2-\s)\frac{\L}{|y|^{n+\s}}.
\end{align}
In this family the extremal operators take the explicit form
\begin{align*}
\cM^+_{\cL_0} v(x,t) &:= \sup_{L\in\cL_0}(Lv)(x,t)\\
& = (2-\s)\int\limits_{\R^n}\frac{\L\d^+(v,x,t;y)-\L^{-1}\d^-(v,x,t;y)}{|y|^{n+\s}}dy,
\end{align*}
and 
\begin{align*}
\cM^-_{\cL_0} v(x,t) &:= \inf_{L\in\cL_0}(Lv)(x,t)\\
& = (2-\s)\int\limits_{\R^n}\frac{\L^{-1}\d^+(v,x,t;y)-\L\d^-(v,x,t;y)}{|y|^{n+\s}}dy.
\end{align*}
Here $\d^\pm$ denotes the positive and negative part of $\d$ ($\d = \d^+ - \d^-$).

Notice that in this case the hypothesis of Lemma \ref{defisok} are immediately satisfied.

The factor $(2-\s)$ becomes important as $\s \to 2^-$ as they will allow us to recover second order differential operators.

The family $\cL_1$ satisfy additionally that for every kernel $K$,
\begin{align*}
|DK(y)| \leq \L|y|^{-(n+\s+1)}.
\end{align*}

An important property of these families is that the parabolic equations associated with them remain invariant under scaling. If $I$ is elliptic with respect to $\cL$ and $u$ is a smooth function such that
\begin{align*}
u_t - Iu = f \text{ in } \W\times(-T,0],
\end{align*}
then $u_\b = u(\b x, \b^\s t)$ satisfies
\begin{align*}
(u_\b)_t - I_\b u_\b = f_\b \text{ in } \b^{-1} \W\times(-\b^{-\s} T,0],
\end{align*}
where,
\begin{align*}
I_\b v(x,t) &= \b^\s I(v(\b^{-1}\cdot,\b^{-\s}\cdot))(\b x, \b^\s t),\\
f_\b(x,t) &= \b^\s f(\b x, \b^\s t).
\end{align*}
Therefore, the ellipticity with respect to $\cL$ gets transformed, for $I_\b$, into ellipticity with respect to $\cL_\b$ which consist of all the linear operators $L_\b$ obtained from $L\in\cL$ according to $L_\b v(x,t) = \b^\s L(v(\b^{-1}\cdot,\b^{-\s}\cdot))(\b x, \b^\s t)$. If $L$ has kernel $K$, then $L_\b$ has kernel $K_\b(x,t) = \b^{n+\s}K(\b x,\b^\s t)$. In particular, if $\cL = \cL_i$ then $\cL_\b = \cL$.


\subsection{Viscosity solutions.}

Viscosity solutions always assume a minimum requirement of continuity. Here we denote the space of upper semicontinuous functions in $\bar\Omega\times[-T,0]$, always with respect to the parabolic topology, by $USC(\bar\Omega\times[-T,0])$. Similarly, $LSC(\bar\Omega\times[-T,0])$ denotes the space of lower semicontinuous functions in $\bar\Omega\times[-T,0]$.

With respect to the time derivative, it is natural for the parabolic topology to consider only the values of $u$ towards the past. In this sense
\begin{align*}
u_{t^-}(x,t) = \lim_{h\to0^+}\frac{u(x,t)-u(x,t-h)}{h}.
\end{align*}

\begin{definition}\label{viscosity}
A function $u \in USC(\bar\Omega\times[-T,0])\cap C(-T,0;L^1(\w))$ ($u \in C(-T,0;L^1(\w)) \cap LSC(\bar\Omega\times[-T,0])$), is said to be a subsolution (supersolution) to $u_t - Iu=f$, and we write $u_t - Iu \leq f$ ($u_t - Iu \geq f$), if every time $(v,B_r(x)\times(t-\t,t]) \in S$ touches $u$ from above (below) at $(x,t)$, i.e.
\begin{itemize}
\item[(i)] $v(x,t)=u(x,t)$,
\item[(ii)] $v(y,s)>u(y,s)$ ($\varphi(y,s)<u(y,s)$) for every $(y,s)\in B_r(x)\times(t-\t,t]\sm\{(x,t)\}$,
\item[(iii)] $v(y,s)=u(y,s)$ for every $(y,s)\in\R^n\sm B_r(x)\times(t-\t,t]$,
\end{itemize}
then $v_{t^-}(x,t) - Iv(x,t) \leq f(x,t)$ ($v_{t^-}(x,t) - Iv(x,t) \geq f(x,t)$).
\end{definition}

An equivalent definition holds if instead of using parabolic second order polynomials as test functions we use test functions $\varphi$ with less regularity around the contact point. This is important when we want to prove the maximum principle by means of and inf and sup convolutions. We omit it here and just assume that maximum principle for viscosity solutions holds. The ideas of the proof of this result can be found in Section 4 and 5 in \cite{C1} for the stationary problem and in the appendix of \cite{S2} for the time depending problem.

One important advantage of the viscosity solutions is that they have enough stability to allow us to keep the equation even when the boundary data changes abruptly in time far away from the domain of the equation. For example, consider the function $u(x,t) = \chi_{E\times\{0\}}$ where $E \ss \R^n\sm B_1$. In the domain $B_1\times(-1,0)$, $u$ satisfies $u_{t^-} + (-\D)^{\s/2}u = 0$ in the classical sense. When $t=0$ the equation is not satisfied any more because $u_{t^-}(x,0)$ is still zero in $B_1$ but $(-\D)^{\s/2}u(x,0)$ becomes strictly positive in $B_1$. If we consider now the same equation in the viscosity sense, $u$ is a solution even when $t=0$. The restriction for the test functions to be in $C(-\t,0;L^1(\w))$ (in the case the contact occurs at $t=0$) implies that such test functions will not be able to see that $u$ has a jump at $t=0$.


\subsection{Previous Results}

The following results can be found in \cite{CD}.


\begin{theorem}[Existence and uniqueness]
Let $\s\in(0,2)$, $\W$ be a smooth domain, $I$ a continuous elliptic operator with respect to $\cL_0$ and $f$ and $g$ bounded, continuous functions. The Dirichlet problem, 
\begin{align*}
 u_t - Iu &= f \text{ in } \W\times(-T,0],\\
 u &= g \text{ in } ((\R^n \sm \W)\times(-T,0]) \cup (\R^n\times\{-T\}),
\end{align*}
has a unique viscosity solution $u$.
\end{theorem}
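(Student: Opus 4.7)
The plan is to use Perron's method, which for viscosity solutions requires three ingredients: a comparison principle (for uniqueness and to make Perron's envelope single‐valued), global barriers (bounded sub- and supersolutions bracketing $g$), and local barriers at each point of the parabolic boundary $\partial_p(\W\times(-T,0])=((\R^n\sm\W)\times(-T,0])\cup(\R^n\times\{-T\})$ (to enforce the boundary datum). Uniqueness is immediate from the comparison principle for viscosity sub/supersolutions which the paper has already declared it will assume; so the real content is existence.

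For the global barriers, since $\|f\|_\infty,\|g\|_\infty<\infty$ and constants are annihilated by any $L_K$ with the integrability condition \eqref{intcond}, the constants $\pm(\|g\|_\infty+T\|f\|_\infty)$ serve as global super- and subsolutions that dominate (respectively are dominated by) $g$ outside $\W\times(-T,0]$. Then define
\begin{align*}
u(x,t)=\sup\{v(x,t):\ v\in USC\cap C(-T,0;L^1(\w)),\ v_t-Iv\le f \text{ in }\W\times(-T,0],\ v\le g\text{ on }\partial_p\}.
\end{align*}
Standard viscosity arguments (stability of subsolutions under suprema, upper-semicontinuous envelope) show that $u$ is a subsolution; the fact that $u_*$ (its lower semicontinuous envelope) is a supersolution is the classical ``bump lemma'' argument: if $u_*$ failed to satisfy the supersolution inequality at some interior $(x_0,t_0)$, one could perturb $u$ upward in a small parabolic neighborhood and contradict maximality. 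The continuity of $I$ (Lemma \ref{defisok}) and the fact that the test functions lie in $S$, together with the assumption on $f$, make this perturbation argument go through verbatim as in the translation invariant case.

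To close the argument one must show that $u=u_*=g$ on $\partial_p(\W\times(-T,0])$, and this is the main obstacle. At the initial time $t=-T$ one uses an upper barrier of the form $g(x,-T)+\varepsilon+C(x,\varepsilon)(t+T)$, where $C(x,\varepsilon)$ is chosen large depending on the modulus of continuity of $g$ and the ellipticity bounds coming from $\cL_0$: because $\cM^+_{\cL_0}$ acts on a bounded function with bounded integral, the value $\cM^+_{\cL_0}$ of any such test function is bounded, so $C$ can be chosen to make the parabolic inequality strict. For the lateral boundary, smoothness of $\W$ is needed: at each $x_0\in\p\W$ one exhibits a local barrier $b(x,t)=g(x_0,t)+\omega(\mathrm{dist}(x,\p\W))$, where $\omega$ is built from the standard exterior‐ball sub/supersolutions for $\cM^\pm_{\cL_0}$ of the form $c(\mathrm{dist}(x,\p\W))^{s}$ with $s$ chosen according to $\sigma$; these are precisely the barriers referenced in the introduction as coming from \cite{CD}. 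Combining the spatial barrier with the above temporal barrier and invoking the comparison principle against Perron's envelope $u$ forces $u(x,t)\to g(x_0,t_0)$ as $(x,t)\to(x_0,t_0)\in\p_p$, completing existence.

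The delicate point is that the test‐function space $S$ imposes the time‐continuity requirement (ii) on $C(-T,0;L^1(\w))$ for the barriers, so one must verify that both the global and local barriers lie in this space; this is where the uniform control by the weight $\w$ in Lemma \ref{defisok} and the continuity of $g$ are used, so that the barriers are admissible test functions in the sense of Definition \ref{viscosity}. Modulo this verification, everything else is a direct parabolic adaptation of the elliptic Perron's method of \cite{C1}.
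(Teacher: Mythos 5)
This theorem is not proven in the paper under review: the paper lists it under ``Previous Results'' and attributes it, together with the H\"older and $C^{1,\a}$ theorems, to \cite{CD}. So there is no in-paper proof to compare against; what you have written is an independent proof sketch. Your general strategy (Perron's method, using the assumed comparison principle for uniqueness, global bounded barriers, and local barriers at the parabolic boundary) is the standard and essentially correct route, and it is the one most plausibly used in \cite{CD}.

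That said, there are a few imprecisions worth flagging. The constants $\pm(\|g\|_\infty+T\|f\|_\infty)$ are not sub/supersolutions of $u_t-Iu=f$ for general bounded $f$: a constant has $u_t=0$ and $Iu=0$, so the supersolution inequality $0\geq f$ fails whenever $f>0$. You need the time-dependent functions $\pm\bigl(\|g\|_\infty+(t+T)\|f\|_\infty\bigr)$, which give $u_t=\pm\|f\|_\infty$. Similarly, the lateral barrier $b(x,t)=g(x_0,t)+\omega(\mathrm{dist}(x,\partial\W))$ presumes $g(x_0,\cdot)$ is differentiable (or at least Lipschitz) in $t$; the hypotheses only give continuity, so one must replace $g(x_0,t)$ by $g(x_0,t_0)+\varepsilon+C_\varepsilon\,|t-t_0|$ (with $C_\varepsilon$ chosen from the modulus of continuity of $g$) or equivalent. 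Finally, the pure spatial barrier $c\,\mathrm{dist}(x,\partial\W)^s$ for $\cM^\pm_{\cL_0}$ does not by itself solve the parabolic problem; in the nonlocal parabolic setting one needs a genuine space-time barrier of the type constructed in Lemma \ref{barrier} (the function $\psi$), whose parabolic scaling and behavior away from the singularity are the content of that lemma. Your phrase ``combining the spatial barrier with the temporal barrier'' glosses over exactly this construction; it is the only nontrivial point in the lateral-boundary step, and the argument as written is incomplete without it. With the time-dependent global barrier and with Lemma \ref{barrier} invoked explicitly in place of the static exterior-ball barrier, the outline would be a complete proof sketch.
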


\begin{theorem}[H\"older regularity]
Let $\s_0 \in(0,2)$ and $\s\in(\s_0,2)$. Let $u\in C(\W\times(-1,0])$ such that it satisfies the following two inequalities in the viscosity sense with $C_0 \geq 0$,
\begin{align*}
u_t - \cM_{\cL_0(\s)}^- u &\geq -C_0 \text{  in $B_1\times(-1,0]$},\\
u_t - \cM_{\cL_0(\s)}^+ u &\leq C_0 \text{  in $B_1\times(-1,0]$}.
\end{align*}
Then there is some $\a \in (0,1)$ and $C>0$, depending only on $n$, $\L$ and $\s_0$, such that for every $(y,s),(x,t) \in B_{1/2}\times(-1/2,0]$
\begin{align*}
\frac{|u(y,s) - u(x,t)|}{(|x-y| + |t-s|^{1/\s})^\a} \leq C\1\|u\|_{L^\8(\bar B_1\times[-1,0])} + \|u\|_{C(-1,0;L^1(\w))} + C_0\2
\end{align*}
\end{theorem}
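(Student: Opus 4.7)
The plan is to deduce the Hölder estimate from a "diminish of oscillation" lemma in parabolic cylinders $Q_\rho = B_\rho\times(-\rho^\s,0]$, iterated dyadically. First I would reduce to a normalized setting: by replacing $u$ with $u/M$ where $M = \|u\|_{L^\8(\bar B_1\times[-1,0])} + \|u\|_{C(-1,0;L^1(\w))} + C_0/\e_0$ for a small universal $\e_0$ to be chosen, and by using the invariance of $\cL_0(\s)$ under the parabolic rescaling $u_\b(x,t)=u(\b x,\b^\s t)$ described in Subsection 2.2, I reduce to the case $\|u\|_{L^\8}\leq 1$, $\|u\|_{C(-1,0;L^1(\w))}\leq 1$ and $C_0\leq\e_0$. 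Since $\cM^\pm_{\cL_0(\s)}$ scale in the correct parabolic way, this reduction does not affect the two viscosity inequalities.

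The core is the following diminish-of-oscillation step: there exist universal $\th\in(0,1)$ and $\rho\in(0,1)$ such that if $u$ satisfies the hypotheses on $B_1\times(-1,0]$ with $\osc u\leq 1$ on $\R^n\times[-1,0]$ (in the sense of the tail norm), then $\osc_{Q_\rho}u\leq\th$. To prove this I would follow the standard Krylov--Safonov scheme adapted to the nonlocal parabolic setting: after subtracting $\inf u$ one may assume $u\in[0,1]$ in $B_1\times(-1,0]$ and either $u(x_*,t_*)\leq 1/2$ or $u(x_*,t_*)\geq 1/2$ at some reference point in $Q_{3/4}$. Applying the parabolic point estimate/ABP-type measure estimate from \cite{CD} to the extremal equations, together with barriers of the form $\Phi(x,t)=c(|x|^2-r^2)_+^\b - \l t$ from \cite{CD} that are subsolutions of $u_t-\cM^+_{\cL_0}u\leq 0$ uniformly in $\s\geq\s_0$, one shows $|\{u\geq 3/4\}\cap Q_{1/2}|\geq\mu$ for some universal $\mu>0$, or the symmetric statement. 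The weighted Calder\'on--Zygmund-type decomposition that underlies the measure estimate then upgrades this into a pointwise bound $\osc_{Q_\rho}u\leq 1-\delta$.

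Once the one-step oscillation decay is established, iteration at every $(x_0,t_0)\in B_{1/2}\times(-1/2,0]$ on the sequence of cylinders $Q_{\rho^k}(x_0,t_0)$ yields $\osc_{Q_{\rho^k}(x_0,t_0)}u\leq\th^k$, which translates via $r=\rho^k$ and the parabolic distance $|x-y|+|t-s|^{1/\s}$ into the claimed H\"older estimate with $\a=\log\th/\log\rho$ and constant $C$ picking up all the normalization factors $M$.

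The main obstacle, as usual in the nonlocal setting, is controlling the tail of $u$: because the extremal operators are nonlocal, at each step of the iteration the rescaled function $u_k(x,t)=\th^{-k}(u(x_0+\rho^k x, t_0+\rho^{k\s}t)-m_k)$ is no longer bounded by $1$ away from $Q_1$, and one must absorb the growth of $\|u_k\|_{L^1(\w)}$ into the right-hand side using the specific choice $\w(y)=(1+|y|^{n+\s_0})^{-1}$ that is preserved under the rescaling uniformly in $\s\in(\s_0,2)$. The $(2-\s)$ normalization in the definition of $\cL_0(\s)$ is precisely what keeps the barrier and measure constants uniform as $\s\to 2^-$, and this is the step that requires the most care.
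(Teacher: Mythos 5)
The theorem you are asked to prove is stated in this paper only as a ``previous result'' imported from \cite{CD}; the present paper contains no proof of it, so there is nothing here to compare your attempt against. Your proposal is a plausible sketch of the Krylov--Safonov / Caffarelli--Silvestre machinery that \cite{CD} follows (normalize, prove a one-step diminish of oscillation from a point estimate, iterate on shrinking parabolic cylinders), and you correctly single out the tail growth and the uniformity in $\s\geq\s_0$ (via the $(2-\s)$ factor and the weight $\w$) as the delicate points.

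That said, the sketch has some genuine gaps. First, the barrier $\Phi(x,t)=c(|x|^2-r^2)_+^\b-\l t$ vanishes inside $B_r$ and grows at infinity; that is the shape of a boundary barrier (as in Section~3 of this paper), not the shape needed for the point estimate. For the ABP-type measure estimate you need the opposite profile: a global supersolution that is large near the center and small but controlled outside (for instance, truncations of $|x|^{-p}$), so that it forces $u$ to be small on a set of positive measure inside $Q_{1/2}$. Second, the statement that ``the Calder\'on--Zygmund-type decomposition then upgrades this into a pointwise bound $\osc_{Q_\rho}u\leq 1-\delta$'' compresses the logic incorrectly: the CZ decomposition produces the measure estimate (decay of $|\{u>t\}\cap Q|$ in $t$, i.e.\ an $L^\e$ weak Harnack), and a separate, two-sided argument is then needed to convert that into a pointwise oscillation decay. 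Third, the hypothesis ``$\osc u\leq 1$ on $\R^n\times[-1,0]$ in the sense of the tail norm'' is too vague to iterate: under the rescaling $u_k(x,t)=\theta^{-k}\bigl(u(x_0+\rho^k x, t_0+\rho^{k\s}t)-m_k\bigr)$ the $C(-1,0;L^1(\w))$-norm itself can grow, so one cannot simply absorb the tail into that norm. The usual fix is to prove the one-step decay under a growth hypothesis of the form $|u(x,t)|\leq 1+(|x|-1)_+^{\a_0}$ for a small universal $\a_0$, choose $\theta$, $\rho$ with $\theta\geq\rho^{\a_0}$ so that the $u_k$ keep satisfying that same bound, and only then read off $\a<\a_0$. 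Without that bookkeeping the iteration does not close.
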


\begin{theorem}[Regularity for translation invariant operators]\label{C1aold}
Let $\s_0 \in(0,2)$, $\s\in(\s_0,2)$, $I$ be elliptic operator with respect to $\cL_1(\s,\L)$ and translation invariant in space and $f\in C([-1,0])$. Let $u \in C(\bar B_1\times[-1,0])$ be a viscosity solution of the equation,
\begin{align*}
u_t - Iu = f(t) \text{ in $B_1\times(-1,0]$},
\end{align*}
then $u$ is $C^{1,\a}$ in space for some universal $\a \in (0,1)$. More precisely, there is a constant $C>0$, depending only on $n$, $\L$ and $\s_0$, such that for every $(x,t),(y,s) \in B_{1/4}\times(-1,0]$
\begin{align*}
\frac{|u_{x_i}(x,t) - u_{x_i}(y,s)|}{(|x-y| + |t-s|^{1/\s})^\a} &\leq C\1\|u\|_{L^\8(\bar B_1\times[-1,0])} + \|u\|_{C(-1,0;L^1(\w))}\right.\\
&\left. {} + \|f\|_{L^\8((-1,0])}\2, \text{ for $i=1,\ldots,n$}.
\end{align*}
\end{theorem}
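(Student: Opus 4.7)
The plan hinges on an algebraic observation: because $I$ is translation invariant in space and $f=f(t)$ depends only on time, for every $h\in\R^n$ both $u$ and $u(\cdot+h,\cdot)$ solve the same equation. By the ellipticity of $I$ with respect to $\cL_1\ss\cL_0$, the first-order spatial difference $w_h(x,t):=u(x+h,t)-u(x,t)$ satisfies the two-sided extremal inequalities
\begin{align*}
(w_h)_t-\cM^+_{\cL_0}w_h\leq 0\quad\text{and}\quad (w_h)_t-\cM^-_{\cL_0}w_h\geq 0
\end{align*}
in the viscosity sense wherever both $u$ and its translate are defined. This reduces each gain of regularity to a single application of the H\"older regularity theorem stated above.

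I would first apply that theorem to $u$ itself, using that $Iu$ is controlled by $\cM^\pm_{\cL_0}u$ (since $I(0)=0$ for an inf-sup of such integral operators and $f(t)$ is bounded), to obtain $u\in C^{\a_0}$ in space-time for some universal $\a_0\in(0,1)$. Dividing $w_h$ by $|h|^{\a_0}$ then yields a uniformly bounded function --- the tails of the translate remain controlled by property (4) of the weight in Lemma \ref{defisok} --- so the H\"older theorem upgrades $u$ to $C^{2\a_0}$ in space. Iterating the bootstrap $k$ times produces $u\in C^{k\a_0}$ in space as long as $k\a_0<1$.

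The main obstacle is crossing the Lipschitz threshold: a function cannot be more than Lipschitz before admitting a gradient, so the bootstrap above stalls as $k\a_0\nearrow 1$. Here the refinement $|DK(y)|\leq\L|y|^{-(n+\s+1)}$ of the $\cL_1$ kernels becomes essential, since it ensures that limiting translation invariant $\cL_1$-elliptic solutions are smooth enough to admit directional derivatives. I would implement the final step through an improvement-of-flatness scheme --- exhibit $\eta\in(0,1)$ and $\a\in(0,1)$ such that every normalized solution admits a spatial affine function $\ell$ with $|\n\ell|\leq C$ and $\|u-\ell\|_{L^\8(B_\eta\times(-\eta^\s,0])}\leq\eta^{1+\a}$ --- and iterate over dyadic parabolic scales, using that $\cL_1$ is invariant under the rescaling noted in the paper. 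The existence of $\ell$ at each scale is proved by compactness-contradiction: if it failed, one constructs a sequence of normalized $\cL_1$-elliptic solutions with arbitrarily bad flatness, which by the iterated $C^{k\a_0}$ bootstrap (applied also to second-order increments $(u(x+he)+u(x-he)-2u(x))/|h|^{1+\g}$, still exploiting translation invariance) is precompact in $C^\g$ for every $\g<1$; the limit solves a translation invariant $\cL_1$-elliptic equation whose kernel smoothness permits differentiation along $e$, yielding the missing affine approximation and a contradiction. Once the spatial $C^{1,\a}$ estimate is in hand, applying the H\"older theorem one final time to the first spatial difference quotient produces the joint $(|x-y|+|t-s|^{1/\s})^\a$ modulus for $\n_x u$ with the correct parabolic scaling.
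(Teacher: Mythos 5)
This theorem is not proved in the present paper: it is quoted in the ``Previous Results'' subsection with the attribution ``The following results can be found in \cite{CD}.'' So there is no in-paper argument to compare against, but your outline can still be assessed on its own terms, and it has a genuine gap in the very first bootstrap step.

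You claim that dividing $w_h=u(\cdot+h,\cdot)-u$ by $|h|^{\a_0}$ yields a function to which the H\"older theorem applies, ``the tails of the translate remain controlled by property (4) of the weight in Lemma~\ref{defisok}.'' That property only gives $\|u(\cdot+h,t)\|_{L^1(\w)}\leq C\|u(\cdot,t)\|_{L^1(\w)}$; it does \emph{not} make the difference $w_h$ small in $L^1(\w)$. The H\"older estimate you want to invoke has a term $\|\cdot\|_{C(-1,0;L^1(\w))}$ on its right-hand side, and for $w_h/|h|^{\a_0}$ this term is of order $\|u\|_{C(-1,0;L^1(\w))}/|h|^{\a_0}$, which blows up as $h\to0$; $u$ is only locally $C^{\a_0}$, not globally. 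The standard remedy (Caffarelli--Silvestre, adapted here to the parabolic setting) is to truncate $u$ outside a ball, which makes $w_h/|h|^{\a_0}$ genuinely bounded in $L^1(\w)$, and to absorb the resulting error $Iu-I(\eta u)$ into the right-hand side; one then observes that the increment of this error, $g(\cdot+h,\cdot)-g$, is $O(|h|)$ \emph{precisely because of the $\cL_1$ condition} $|DK(y)|\leq\L|y|^{-(n+\s+1)}$: one changes variables in the tail integral and uses the mean-value estimate $|K(y-h)-K(y)|\leq C|h||y|^{-(n+\s+1)}$. So the $\cL_1$ refinement is indispensable already at the first bootstrap step, not only at the Lipschitz threshold as your outline suggests; without it the iteration never starts.

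A secondary concern is the compactness-contradiction you invoke at the end. You conclude from the contradiction hypothesis that ``the limit solves a translation invariant $\cL_1$-elliptic equation whose kernel smoothness permits differentiation along $e$, yielding the missing affine approximation.'' As written this is circular: the statement that $\cL_1$-elliptic translation invariant solutions admit first-order affine approximations \emph{is} the theorem you are trying to prove. The improvement-of-flatness scheme works in Theorem~\ref{C1a theorem} of this paper exactly because the limiting operator there is assumed to already have interior $C^{1,\bar\a}$ estimates; here you would need the limit to be special in some more elementary way (e.g.\ constant, or a solution of a linear constant-coefficient equation) to close the contradiction. Once the truncation-and-kernel-smoothness mechanism above is in place, the cleaner route is to run the bootstrap through the incremental-quotient lemma (Lemma 5.6 in \cite{CC}), choosing $\a_0$ so the chain $k\a_0$ skips the integer $1$, rather than setting up a flatness-improvement argument whose limit has no a priori regularity to exploit.
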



\subsubsection{Counterexample for time regularity}

Theorem \ref{C1aold} does not give more regularity in time even if $I$ is translation invariant in time and $f \equiv 0$. Consider the following example for the fractional heat equation $u_t + (-\D)^{\s/2}u = 0$ in $B_1\times(-1,0]$. Let $u$ be the solution of such problem, with initial value $u(\cdot,-1) \equiv 0$. The boundary data outside $B_1$ is equal to $\underbar u$ where 
\begin{align*}
\underbar u(x,t) = \begin{cases}
0 &\text{ if $t < -1/2$},\\
C_1(t+1/2) + \chi_{B_3 \sm B_2}(x) &\text{ if $t \geq -1/2$},
\end{cases}
\end{align*}
and the constant $C_1>0$ is chosen small enough so that $\underbar u$ is a subsolution to the fractional heat equation in $B_1\times(-1,0]$. By the comparison principle, we have that $u \geq \underbar u$ in $B_1\times(-1,0]$. Also $u \equiv 0$ in $B_1\times[-1,-1/2]$ by uniqueness. This show that the time derivative of $u$ have a jump at $t=-1/2$ at every $x\in B_1$.

\section{Boundary regularity}\label{SBR}

The regularity up to the boundary depends on the comparison principle and the existence of barriers. In \cite{CD} it was showed the existence of a barrier with the following properties. From this moment on we will implicitly assume that all the integro-differential inequalities are satisfied in the viscosity sense.

\begin{lemma}[Barrier]\label{barrier}
For $\s\geq\s_0>0$, there exists a non negative function $\psi:\R^n\times(-\8,0]\to\R$ such that:
\begin{alignat*}{2}
\psi &= 0 &&\text{ in $B_1\times\{0\}$},\\
\psi_t-\cM^+_{\cL_0}\psi &\geq 0 &&\text{ in $\R^n\sm B_1\times(-\8,0]$},\\
\psi &\geq 1 &&\text{ in $\R^n\times(-\8,0]\sm(B_2\times[-\k,0])$},
\end{alignat*}
for some $\k$ universal (depending on $\s_0$ but independent of $\s$).
\end{lemma}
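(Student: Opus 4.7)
The plan is to construct $\psi$ in the separated form $\psi(x,t) = \phi(x) + \eta(t)\zeta(x)$, where $\phi$ is a stationary spatial barrier vanishing on $\bar B_1$ and reaching $1$ outside $B_2$, $\eta:(-\infty,0]\to[0,1]$ is non-increasing with $\eta(0)=0$ and $\eta(-\k)=1$, and $\zeta$ is a smooth cutoff equal to $1$ on $\bar B_2$ and supported in $B_3$. The vanishing at $t=0$ on $B_1$ is immediate from $\phi|_{\bar B_1}\equiv 0$ and $\eta(0)=0$; the bound $\psi\geq 1$ outside the parabolic cylinder $B_2\times[-\k,0]$ is produced by $\phi$ alone when $|x|\geq 2$ and by $\eta(t)\zeta(x)$ when $t\leq -\k$ and $x\in\bar B_2$; and the supersolution inequality reduces by subadditivity of $\cM^+_{\cL_0}$ to
\begin{align*}
\psi_t - \cM^+_{\cL_0}\psi \geq \eta'(t)\zeta(x) - \cM^+_{\cL_0}\phi(x) - \eta(t)\cM^+_{\cL_0}\zeta(x).
\end{align*}

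For the spatial profile I would take a radial function such as $\phi(x) = \min(1, c_0((|x|-1)_+)^\b)$ with $\b\in(0,\s_0/2)$ and $c_0$ tuned so $\phi=1$ precisely on $\R^n\sm B_2$. A direct computation using radial symmetry and the explicit extremal kernels $|y|^{-n-\s}$ shows $-\cM^+_{\cL_0}\phi(x)\geq c_1>0$ on the annulus $B_2\sm\bar B_1$; on $\R^n\sm B_2$ the function $\phi\equiv 1$ has nonpositive second differences and hence $\cM^+_{\cL_0}\phi\leq 0$ there. Uniformity of $c_1$ in $\s\in[\s_0,2)$ is ensured by the $(2-\s)$ normalization in the definition of $\cL_0$ combined with the restriction $\b<\s_0\leq \s$.

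For the time profile take $\eta(t)=\min(1,-t/\k)$, so that $|\eta'|\leq 1/\k$ on $(-\k,0)$ and vanishes elsewhere. With a universal bound $|\cM^+_{\cL_0}\zeta|\leq C$ coming from the smoothness and compact support of $\zeta$, the supersolution inequality on $B_2\sm\bar B_1$ with $t\in(-\k,0]$ reads
\begin{align*}
\psi_t - \cM^+_{\cL_0}\psi \geq -\frac{1}{\k} + c_1 - C\eta(t) \geq -\frac{1}{\k} + c_1 - C,
\end{align*}
which is nonnegative provided $\k$ is chosen so that $1/\k \leq c_1 - C$. Outside $\supp\zeta$ the time term drops out and only the stationary inequality on $\phi$ is needed; for $t\leq -\k$ we have $\eta'\equiv 0$ and the same remark applies.

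The main obstacle will be securing the strictly positive lower bound $-\cM^+_{\cL_0}\phi\geq c_1$ uniformly in $\s\in[\s_0,2)$: the singular integral computing $\cM^+_{\cL_0}\phi$ near $|x|=1$ interacts delicately with the H\"older-type behavior of $\phi$ at $\p B_1$, and as $\s\to 2^-$ one must recover the classical elliptic barrier estimate from the nonlocal one. If the explicit profile above fails to yield a $\s$-uniform $c_1$, an alternative is to let $\phi$ depend on $\s$ (e.g.\ $\phi(x)=\min(1,((|x|^2-1)_+)^{\s/4})$) so its natural scaling matches the operator's order, or to take $\phi$ as the solution of a stationary exterior Dirichlet problem for $-\cM^+_{\cL_0}$, whose existence and decay were already established in \cite{CD}.
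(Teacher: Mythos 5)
This lemma is not proved in the paper; the text explicitly says the barrier was constructed in \cite{CD}, so there is no in-paper argument to compare against, and I assess your proposal on its own terms. The pointwise requirements are indeed met by the ansatz $\psi=\phi+\eta\zeta$, and the sublinearity reduction for $\cM^+_{\cL_0}$ is correct. The problem is that the supersolution inequality must hold on all of $(\R^n\sm B_1)\times(-\8,0]$, and you only verify it on $(B_2\sm\bar B_1)\times(-\k,0]$, outside $\supp\zeta$, and for $t\le-\k$; the transition annulus $B_3\sm\bar B_2$ is skipped and is precisely where the construction breaks. On that annulus $\phi\equiv1$ locally, so $-\cM^+_{\cL_0}\phi(x)$ receives no help from the quadratic singular part and is only a small nonlocal contribution from $B_2$ (with no uniform positive lower bound available); meanwhile $\zeta$ attains its minimum value $0$ on $\p B_3$, so for $x$ near $\p B_3$ one has $\d(\zeta,x,t;y)=\zeta(x+y)+\zeta(x-y)-2\zeta(x)\ge0$ for all $y$, hence $\cM^+_{\cL_0}\zeta(x)>0$ of order one, uniformly in $\s\in[\s_0,2)$. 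For $t\in(-\k,0)$ the term $-\eta(t)\cM^+_{\cL_0}\zeta(x)$ is then a negative order-one quantity that nothing absorbs, and $\psi_t-\cM^+_{\cL_0}\psi<0$ there.

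Two secondary points. First, the condition $1/\k\le c_1-C$ you impose on $B_2\sm\bar B_1$ silently presupposes $c_1>C$, which is nowhere established; but in fact it is not needed there, because $\zeta\equiv1$ on $\bar B_2$ and $\zeta\le1$ globally give $\d(\zeta,x,t;\cdot)\le0$, hence $\cM^+_{\cL_0}\zeta\le0$, and the term $-\eta\cM^+_{\cL_0}\zeta$ is nonnegative, so $\k\ge1/c_1$ already suffices inside $\bar B_2$. This observation, however, does not rescue the annulus $B_3\sm\bar B_2$. Second, the spatial estimate $-\cM^+_{\cL_0}\phi\ge c_1>0$ on $B_2\sm\bar B_1$, uniform in $\s\in[\s_0,2)$, is stated but not shown; your own remarks acknowledge this, and one should either carry out the computation for a profile whose H\"older exponent near $\p B_1$ matches the operator's order, or import the stationary barrier from \cite{CD} or \cite{C1} as you suggest. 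A natural repair for the main gap is to not cap $\phi$ at $1$ near $\supp\zeta$: keep $\phi$ strictly increasing (for instance cap it at a large level $A$ outside $B_3$), so that $-\cM^+_{\cL_0}\phi$ retains a positive lower bound on all of $\supp\zeta\sm B_1$ and can absorb the penalty from $\cM^+_{\cL_0}\zeta$ on the transition annulus.
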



\begin{theorem}[Regularity up to the boundary]\label{bddryreg}
 Let $\s\geq\s_0 > 0$. Let $u:\R^n\times[-1,0]\to \R$ a bounded function such that
\begin{alignat*}{2}
u_t - \cM^+_{\cL_0} u &\leq C_0 &&\text{ in $B_1\times(-1,0]$},\\
u_t - \cM^-_{\cL_0}u &\geq-C_0 &&\text{ in $B_1\times(-1,0]$}.
\end{alignat*}
Let $\rho$ be a modulus of continuity such that
\begin{align*}
|u(x,t) - u(y,s)| \leq \rho(|x-y|\vee|t-s|)
\end{align*}
for every $(x,t) \in (\p B_1\times[-1,0]) \cup (B_1 \times\{-1\})$ and $(y,s)\in ((\R^n \sm B_1)\times[-1,0]) \cup (B_1 \times\{-1\})$. Then there is another modulus of continuity $\bar\rho$ so that
\begin{align*}
|u(x,t) - u(y,s)| \leq \bar\rho(|x-y|\vee|t-s|)
\end{align*}
for every $(x,t)\in B_1\times[-1,0]$ and $(y,s)\in \R^n \times[-1,0]$. The modulus of continuity $\bar\rho$ depends only on $\rho$, $\L$, $\s_0$, $n$, $\|u\|_\8$ and $C_0$.
\end{theorem}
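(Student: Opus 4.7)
My plan is to combine the interior H\"older estimate already quoted with a barrier argument at each point of the parabolic boundary $\partial_p Q:=(\partial B_1\times[-1,0])\cup(\bar B_1\times\{-1\})$. The interior estimate yields a universal modulus on compact subsets of $B_1\times(-1,0]$ depending only on $n,\L,\s_0,\|u\|_\8,C_0$, so the task reduces to producing $|u(x,t)-u(x_0,t_0)|\leq\bar\rho(|x-x_0|\vee|t-t_0|)$ for $(x_0,t_0)\in\partial_pQ$ and $(x,t)\in B_1\times(-1,0]$ at close parabolic distance; the triangle inequality then delivers the full estimate.

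\textbf{Spatial-boundary barrier.} Fix $(x_0,t_0)\in\partial B_1\times[-1,0]$ and a small parameter $r>0$. Set $x_0^*:=(1+r)x_0$, so that $B_r(x_0^*)\cap B_1=\emptyset$, $\partial B_r(x_0^*)\cap\partial B_1=\{x_0\}$, and $\bar B_1\setminus\{x_0\}\ss \R^n\setminus\bar B_r(x_0^*)$. Define the rescaled barrier
\[\Psi_r(x,t):=\psi\!\1\frac{x-x_0^*}{r},\frac{t-t_0}{r^\s}\2,\]
with $\psi$ as in Lemma \ref{barrier}. By the scale invariance of $\cL_0(\s)$, $\Psi_r$ is a viscosity supersolution of $(\Psi_r)_t-\cM^+_{\cL_0}\Psi_r\geq 0$ on $(\R^n\setminus B_r(x_0^*))\times(-\infty,t_0]$ and hence on $B_1\times(-1,t_0]$; it vanishes at $(x_0,t_0)$ because $(-x_0,0)\in\partial B_1\times\{0\}$, where $\psi=0$ by continuity; and it is $\geq 1$ outside $B_{2r}(x_0^*)\times[t_0-\k r^\s,t_0]$. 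The identity $|y-x_0^*|^2=r^2+(1+r)|y-x_0|^2$ for $y\in\partial B_1$ shows that with the choice $r=c\epsilon$ (with a universal $c=c(\s_0,\k)$), every $y\in\partial B_1$ at distance at least $\epsilon$ from $x_0$ satisfies $|y-x_0^*|\geq 2r$, while $\k r^\s\leq\epsilon$ so that the temporal far-field also reaches height $1$ on $\{|s-t_0|\geq\epsilon\}$.

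\textbf{Comparison and extraction of modulus.} Given $\epsilon>0$ and $r=c\epsilon$, compare $u$ with
\[v(x,t):=u(x_0,t_0)+\rho(\epsilon)+(2\|u\|_\8+1)\Psi_r(x,t)+C_0(t-t_0+2).\]
Since $\cM^+_{\cL_0}$ vanishes on constants and on functions depending only on $t$, positive homogeneity gives $v_t-\cM^+_{\cL_0}v=(2\|u\|_\8+1)[(\Psi_r)_t-\cM^+_{\cL_0}\Psi_r]+C_0\geq C_0\geq u_t-\cM^+_{\cL_0}u$ on $B_1\times(-1,t_0]$. On the parabolic boundary of this cylinder I split into the near range $|y-x_0|\vee|s-t_0|\leq\epsilon$, where the hypothesis $u(y,s)-u(x_0,t_0)\leq\rho(\epsilon)$ makes $v\geq u$ immediately, and the far range, where $\Psi_r\geq 1$ and the prefactor $(2\|u\|_\8+1)$ absorbs the full oscillation of $u$. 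The parabolic comparison principle quoted in the paper then gives $u\leq v$ on $B_1\times(-1,t_0]$; applying the same argument to $-u$ (which satisfies the mirrored extremal inequality by $\cM^-_{\cL_0}(u)=-\cM^+_{\cL_0}(-u)$) yields the reverse inequality. Continuity of $\psi$ at $(-x_0,0)$ supplies a universal modulus $\omega_\psi$ with $\Psi_r(x,t)\leq\omega_\psi(|x-x_0|/r\vee|t-t_0|/r^\s)$, and optimizing $\epsilon$ against the parabolic distance $d=|x-x_0|\vee|t-t_0|$ (for instance $\epsilon=\sqrt d$) yields a modulus $\bar\rho$ with the stated dependencies. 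Initial-time points $(x_0,-1)\in\bar B_1\times\{-1\}$ are handled by the same construction with a time-shifted $\psi$, the far-past clause $\psi\geq 1$ outside $B_2\times[-\k,0]$ taking over the role of the spatial far-field clause.

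\textbf{Main obstacle.} The principal delicacy is coordinating the two length scales $\epsilon$ (the scale at which the input modulus $\rho$ is invoked) and $r$ (the radius of the tangent exterior ball): $r$ must be small enough, $r=c(\s_0,\k)\epsilon$, both so that $\Psi_r\geq 1$ covers the distant lateral boundary and so that $\k r^\s\leq\epsilon$ makes the temporal far-field work uniformly in $\s\in[\s_0,2)$, yet not so small that $\omega_\psi(d/r)$ stops decaying as $d\to 0$. A secondary subtlety is the corner where $\partial B_1\times[-1,0]$ meets $\bar B_1\times\{-1\}$: if $t_0$ is within $O(r^\s)$ of $-1$, the time-slab of the spatial barrier fails to cover $[-1,t_0]$ on the portion of $B_1\times\{-1\}$ close to the tangent annulus $\{1-r<|y|\leq 1\}$, and one either restricts the comparison to the thinner cylinder $B_1\times(-1+c\epsilon^\s,t_0]$ or patches in an initial-time barrier; uniformity of $\omega_\psi$ across $x_0\in\partial B_1$ follows from the rotational symmetry of the construction.
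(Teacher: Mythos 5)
Your overall strategy --- barrier at each parabolic-boundary point, plus interior estimates, plus a triangle-inequality combination --- is the same as the paper's, which splits the proof into Lemmas \ref{bddry1}--\ref{ival2}. The lateral-boundary barrier you construct (rescaled $\psi$ with an exterior tangent ball $B_r(x_0^*)$, rescaled so that the far-field clause $\psi\geq 1$ covers both the spatial and temporal tails) is essentially the paper's Lemma \ref{bddry1}.

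However, the claim that \emph{``Initial-time points $(x_0,-1)\in\bar B_1\times\{-1\}$ are handled by the same construction with a time-shifted $\psi$''} is wrong. The barrier $\psi$ of Lemma \ref{barrier} is a supersolution of $\psi_t-\cM^+_{\cL_0}\psi\geq 0$ only on $(\R^n\sm B_1)\times(-\8,0]$, so any translation/dilation of it is a supersolution only outside a ball $B_r(x^*)$. For the comparison principle in $B_1\times(-1,0]$ one must have $B_r(x^*)\cap B_1=\emptyset$; but then the rescaled $\psi$ is uniformly bounded away from zero on $B_1\times\{-1\}$ (indeed $\geq 1$ once the past-time clause kicks in), so it cannot vanish at an interior initial-time point $(x_0,-1)$ with $x_0\in B_1$. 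No amount of time-shifting fixes this, because the zero set of $\psi$ is $B_1\times\{0\}$ and the supersolution region is in the opposite time direction from $(-1,0]$. The paper's Lemma \ref{ival1} instead builds a genuinely different, elementary barrier: a smooth bump $b$ with $\supp(1-b)=B_1$, $b(0)=0$, and $\psi(y,s)=b(y)+\|\cM^+_{\cL_0}b\|_\8\,s$, which is a global supersolution; the rescaled version $\b_r$ then also carries an extra $(s+1)/r$ term so that it reaches height $\geq 1$ at times $s+1\geq r$. You need this (or an equivalent globally-defined barrier), not a reuse of the tangent-ball $\psi$.

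A second, softer gap is in the combination step. \emph{``The triangle inequality then delivers the full estimate''} glosses over the case of two interior points $(x,t),(y,s)$ that are both close to the parabolic boundary and close to each other: interior estimates with constant $\sim\d^{-\a}$ ($\d=\dist$ to $\partial_p Q$) do not directly give a modulus, and routing both through a boundary point gives $2\bar\rho_1(\d+d)$, which also does not decay in $d$ alone. The paper's Lemmas \ref{bddry2} and \ref{ival2} resolve this with a precise case split (whether $(y,s)\in B_{r/2}(x)\times(t-(r/2)^\s,t]$, $r=\dist(x,\p B_1)$ resp.\ $r^\s=1+t$) and, crucially, by applying the interior estimate to $v=u-u(x_0,t)$ rescaled to $B_1\times(-1,0]$, whose $L^\8$ and tail norms are controlled by $\rho(2r\vee r^{\s_0})$ and $I_r$ --- quantities that go to $0$ as $r\to0$ and offset the $r^{-\a}$ blow-up. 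You should make this subtraction and the ensuing $\sup_{r\in[2d,1]} m_r d^\a/r^\a\to 0$ argument explicit. Finally, note that your barrier comparison, and the paper's Lemma \ref{bddry1}, only controls $u$ for times $\leq t_0$; the case $s>t$ in the final statement is handled in the paper by passing through the nearest lateral-boundary point and using the already-established modulus, which you should also spell out.
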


In the particular case of H\"older boundary data, the same proof will give us a H\"older modulus of continuity up to the boundary.

\begin{corollary}\label{bdrycoro}
Let $u$ be as in Theorem \ref{bddryreg} and $\rho(|x-y|\vee|t-s|)=C(|x-y|^{\a_1}+|t-s|^{\a_1})$ for some universal constants $C,\a_1>0$, then $u\in C^\a(\bar{B}_1\times[0,1])$ for some universal constant $\a \leq \a_1$.
\end{corollary}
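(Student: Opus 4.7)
The plan is to revisit the proof of Theorem \ref{bddryreg} and make the modulus $\bar\rho$ quantitative when $\rho(r) = Cr^{\a_1}$. The essential idea is that the construction of $\bar\rho$ proceeds by a dyadic iteration at every point on the parabolic boundary of $B_1 \times [-1,0]$, and if $\r$ is H\"older, each step of the iteration preserves a geometric, hence H\"older, rate.

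Concretely, fix a parabolic-boundary point $(x_0,t_0)$ and, for $k \geq 0$, set
\begin{align*}
Q_k = \1 B_{2^{-k}}(x_0) \times (t_0 - 2^{-\s k}, t_0] \2 \cap \1 B_1 \times [-1,0] \2, \qquad \w_k = \osc_{Q_k} u.
\end{align*}
The barrier $\psi$ from Lemma \ref{barrier}, rescaled to the spatial scale $2^{-k}$ and placed so that its exterior values dominate the oscillation of the boundary data on $Q_k$, combined with the extremal inequalities and the comparison principle, yields the recursion
\begin{align*}
\w_{k+1} \leq \m \, \w_k + C\, 2^{-k\a_1} + C_0\, 2^{-k\s},
\end{align*}
for a universal $\m \in (0,1)$. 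Here the first term comes from the barrier-driven contraction; the second records that the boundary data on $Q_k$ has oscillation at most $C 2^{-k\a_1}$; and the last absorbs the source contribution. A standard iteration lemma then gives $\w_k \leq C\, 2^{-k\a_0}$, with $\a_0 = \min(\a_1, \s_0, -\log_2 \m) > 0$, so
\begin{align*}
\osc_{(B_r(x_0) \times (t_0 - r^\s, t_0]) \cap (B_1 \times [-1,0])} u \leq C r^{\a_0}
\end{align*}
at every parabolic-boundary point $(x_0,t_0)$.

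The second step is to patch this with the interior H\"older regularity theorem quoted above, whose exponent call $\a'$. Set $\a = \min(\a_0,\a') \leq \a_1$. Given two points $(x,t),(y,s) \in \bar B_1 \times [-1,0]$, write $d_p = |x-y| \vee |t-s|^{1/\s}$ and let $d$ be the parabolic distance from $(x,t)$ to $\p_p(B_1 \times [-1,0])$. If $d \leq 2d_p$, chain through the nearest boundary point and apply the boundary estimate twice to obtain $|u(x,t) - u(y,s)| \leq C d_p^{\a_0}$. If $d > 2d_p$, rescale $u$ on the parabolic cylinder $B_d(x) \times (t-d^\s,t]$; by the scale invariance of $\cL_0(\s,\L)$ discussed in Section \ref{VSP}, the rescaled function satisfies the same extremal inequalities with the same constants, and the interior H\"older theorem produces $|u(x,t) - u(y,s)| \leq C (d_p/d)^{\a'} \osc_{Q_d} u$; the boundary estimate controls $\osc_{Q_d} u$ by $C d^{\a_0}$, and choosing $\a \leq \a_0 \wedge \a'$ yields $|u(x,t) - u(y,s)| \leq C d_p^\a$.

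The main obstacle is the first step: one must ensure $\m$ and the constants in the recursion are universal in $\s \geq \s_0$. This is where the uniform treatment of $\cL_0(\s,\L)$ matters: the barrier's waiting time $\k$ in Lemma \ref{barrier} is universal, and the rescaled barrier at scale $2^{-k}$ continues to satisfy the rescaled inequality $\psi_t - \cM^+_{\cL_0(\s)}\psi \geq 0$ because $\cL_0(\s,\L)$ is preserved by the parabolic scaling $x \mapsto 2^{-k}x$, $t \mapsto 2^{-\s k}t$. The only subtlety is coordinating the spatial and temporal contractions so that the rescaled barrier fits inside the truncation dictated by $Q_k$; this is arranged by choosing the barrier to be supported on a suitably thin parabolic shell near the boundary, exactly as in the construction used for Theorem \ref{bddryreg}.
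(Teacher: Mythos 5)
Your proposal takes a genuinely different route from the paper's: the paper's proof of Theorem \ref{bddryreg} produces, in each of the four Lemmas \ref{bddry1}--\ref{ival2}, an explicit modulus of continuity $\bar\r$ (for instance $\bar\r(d)=\inf_{r\in(0,1)}\{\r(3r\vee\k r^{\s_0})+2\|u\|_\8\r_0(d/r^2)\}$ in Lemma \ref{bddry1}), and the corollary is obtained simply by observing that when $\r(r)=Cr^{\a_1}$ these infima are themselves of H\"older type, after balancing the two terms in $r$. Your proof instead proposes a dyadic oscillation iteration $\w_{k+1}\le\m\w_k+C2^{-k\a_1}+C_02^{-k\s}$ with a universal contraction factor $\m\in(0,1)$.

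This is where there is a genuine gap. The barrier comparison in Lemma \ref{bddry1} is performed on $B_1\times(-1,t_0]$, and because the operator is nonlocal one needs $\b_r\ge u$ on \emph{all} of $((\R^n\sm B_1)\times[-1,t_0])\cup(B_1\times\{-1\})$, not merely on the parabolic boundary of a dyadic cylinder $Q_k$. Consequently the coefficient multiplying the rescaled barrier $\psi$ has to dominate $u-u(x_0,t_0)$ wherever $\psi\ge1$, which is essentially all of $\R^n$; this is exactly why the paper's barrier is built with the coefficient $2\|u\|_\8$ rather than a local oscillation. The barrier alone therefore delivers a bound of the form
\begin{align*}
\sup_{Q_{k+1}}\bigl(u-u(x_0,t_0)\bigr)\leq \r(\text{scale}_k)+2\|u\|_\8\,\psi(\cdot),
\end{align*}
with the global sup norm and no replacement of $\|u\|_\8$ by $\w_k$. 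There is no factor $\m\w_k$, and indeed no contraction is available from Lemma \ref{barrier} as stated. Your proposal asserts the ``barrier-driven contraction'' but does not derive it, and in the present nonlocal setting such a recursion would require explicitly tracking the tail of $u$ (for example via the global H\"older bound on the exterior data together with the power-law bound $|u(y,s)|\le M\max(|y|^{1+\a},1)$-type control that appears in Theorem \ref{stacomp}), which the argument does not do. Your second step (chaining through the nearest boundary point when close to $\p_pB_1\times[-1,0]$, and otherwise rescaling and invoking the interior H\"older theorem) is correct and is essentially the content of Lemmas \ref{bddry2} and \ref{ival2}; but since it feeds on the unestablished decay $\w_k\le C2^{-k\a_0}$, the overall proof is incomplete. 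The shorter and correct route is the paper's: substitute $\r(r)=Cr^{\a_1}$ into the explicit $\bar\r$'s of the four lemmas and optimize in $r$ (using that the barrier $\psi$ has a polynomial modulus of continuity and that the tail integral $I_r$ in Lemmas \ref{bddry2} and \ref{ival2} is $O(r^{\a_1\wedge\s_0})$) to read off a H\"older exponent $\a\le\a_1$. A further minor point: your sketch treats only lateral boundary points; initial-time points require the different barrier of Lemma \ref{ival1}, which must be handled separately.
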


We split the proof in four lemmas, according if we are close to the lateral or bottom boundary.


\begin{lemma}\label{bddry1}
Let $\s\geq\s_0 > 0$. Let $u:\R^n\times[-1,0]\to \R$ a bounded function such that
\begin{alignat*}{2}
u_t - \cM^+_{\cL_0}u &\leq C_0 &&\text{ in $B_1\times(-1,0]$}.
\end{alignat*}
Let $\rho$ be a modulus of continuity such that
\begin{align*}
u(y,s) - u(x,t) \leq \rho(|x-y|\vee (t-s))
\end{align*}
for every $(x,t) \in \p B_1\times(-1,0]$ and $(y,s)\in ((\R^n \sm B_1)\times[-1,t]) \cup (B_1\times\{-1\})$. Then there is another modulus of continuity $\bar\rho$ such that
\begin{align*}
u(y,s) - u(x,t) \leq \bar\rho(|x-y|\vee (t-s))
\end{align*}
for every $(x,t)\in \p B_1\times(-1,0]$ and $(y,s)\in \R^n\times[-1,t]$. The modulus of continuity $\bar\rho$ depends only on $\rho$, $\L$, $\s_0$, $n$, $\|u\|_\8$ and $C_0$.
\end{lemma}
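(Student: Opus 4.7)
The plan is to construct a one-sided upper barrier from the obstacle of Lemma \ref{barrier}. Fix a lateral point $(x_0,t_0)\in\p B_1\times(-1,0]$ and, for a parameter $r\in(0,1)$ to be chosen, place the exterior ball $B_r(z_0)$ with $z_0=(1+r)x_0$, tangent to $\p B_1$ at $x_0$, and rescale the barrier to it:
\[
\psi_r(y,s):=\psi\!\left(\frac{y-z_0}{r},\frac{s-t_0}{r^\s}\right).
\]
By scale invariance of $\cM^+_{\cL_0}$, $(\psi_r)_t-\cM^+_{\cL_0}\psi_r\ge 0$ in $(\R^n\sm B_r(z_0))\times(-\8,t_0]$, which contains $B_1\times(-1,t_0]$; moreover $\psi_r\ge 1$ off the parabolic box $Q_r:=B_{2r}(z_0)\times[t_0-\k r^\s,t_0]$, and $\psi_r(y,s)\to 0$ as $(y,s)\to(x_0,t_0)$ by continuity of $\psi$ near $(-x_0,0)\in\p B_1\times\{0\}$.

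To absorb $C_0$ I pass to $\tilde u(y,s):=u(y,s)-C_0(s+1)$, which satisfies $\tilde u_t-\cM^+_{\cL_0}\tilde u\le 0$ and, on the parabolic data set, $\tilde u(y,s)-\tilde u(x_0,t_0)\le\tilde\rho(|x_0-y|\vee(t_0-s))$ with $\tilde\rho(\xi):=\rho(\xi)+C_0\xi$. With $\m(r):=3r\vee\k r^\s$ and $A:=2\|u\|_\8+2C_0+\tilde\rho(2)+1$, set
\[
v_r(y,s):=\tilde u(x_0,t_0)+\tilde\rho(\m(r))+A\,\psi_r(y,s).
\]
Then $(v_r)_t-\cM^+_{\cL_0}v_r\ge 0$ in $B_1\times(-1,t_0]$, and to verify $v_r\ge\tilde u$ on the data set $((\R^n\sm B_1)\times[-1,t_0])\cup(B_1\times\{-1\})$ I split on whether $|y-x_0|\vee(t_0-s)\le\m(r)$: in the close case the augmented hypothesis gives $\tilde u(y,s)-\tilde u(x_0,t_0)\le\tilde\rho(\m(r))$, absorbed by the second term; in the far case, since $B_{2r}(z_0)\subset B_{3r}(x_0)$ and $\k r^\s<t_0+1$ for $r$ small, $(y,s)$ lies outside $Q_r$, so $\psi_r(y,s)\ge 1$ and the choice of $A$ closes the inequality. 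The comparison principle for $\cM^+_{\cL_0}$ then yields $v_r\ge\tilde u$ throughout $B_1\times(-1,t_0]$.

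Unwinding the substitution, for every $(y_0,s_0)\in B_1\times(-1,t_0]$ with $\t:=|y_0-x_0|\vee(t_0-s_0)$,
\[
u(y_0,s_0)-u(x_0,t_0)\le\tilde\rho(\m(r))+A\,\psi_r(y_0,s_0)+C_0(s_0-t_0)\le\tilde\rho(\m(r))+A\,\psi_r(y_0,s_0).
\]
Tuning $r=r(\t)$ so that $r\to 0$ with $\t/r\to 0$ as $\t\to 0$ makes both terms vanish---the first directly, the second by continuity of $\psi$ at $(-x_0,0)$---producing the sought modulus $\bar\rho(\t)$ depending only on $\rho$, $\L$, $\s_0$, $n$, $\|u\|_\8$, and $C_0$; points $(y_0,s_0)$ already in the data set are covered by the hypothesis directly. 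The main technical obstacle is making the balance $r=r(\t)$ explicit in terms of a general modulus $\rho$: for H\"older $\rho(\xi)=C\xi^{\a_1}$ the recipe is concrete (take $r=\t^\theta$ for a suitable $\theta\in(0,1)$ and invoke a boundary H\"older estimate on $\psi$, which is the mechanism behind Corollary \ref{bdrycoro}), whereas for a general $\rho$ one argues abstractly with the modulus of continuity of $\psi$ near $(-x_0,0)$.
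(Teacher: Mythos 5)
Your construction closely mirrors the paper's: the exterior tangent ball $B_r(z_0)$ at $(x_0,t_0)$, the rescaled barrier $\psi_r$, the close/far split of the data set, and the comparison principle are all as in the paper's proof of Lemma \ref{bddry1}. Your absorption of $C_0$ by subtracting $C_0(s+1)$ is a clean alternative to the paper's spatial paraboloid $-C(4-|x|^2)^+$ and is arguably simpler since it leaves $\cM^\pm_{\cL_0}$ untouched. One cosmetic difference worth noting: the paper evaluates $\rho$ at $3r\vee\k r^{\s_0}$ rather than your $3r\vee\k r^\s$, precisely so that the resulting $\bar\rho$ is uniform in $\s\in[\s_0,2)$; your version works for each fixed $\s$ but one should then pass to the $\s_0$ exponent (or take a supremum over $\s$) to match the stated uniformity.

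There is, however, one genuine slip in the final tuning. You say $r\to 0$ with $\tau/r\to 0$ suffices, but that only sends the spatial argument of $\psi_r$, namely $\tfrac{y_0-z_0}{r}=\tfrac{y_0-x_0}{r}-x_0$, to $-x_0$. The time argument is $(s_0-t_0)/r^\s$, and since $\s$ can exceed $1$, the condition $\tau/r\to 0$ does \emph{not} imply $\tau/r^\s\to 0$. You need the stronger $\tau/r^\s\to 0$, and to be uniform over $\s<2$ you need $\tau/r^2\to 0$. This is exactly what the paper's explicit formula encodes:
\begin{align*}
\bar\rho(d)=\inf_{r\in(0,1)}\left\{\rho\left(3r\vee\k r^{\s_0}\right)+2\|u\|_\8\,\rho_0\!\left(d/r^2\right)\right\},
\end{align*}
where $\rho_0$ is a modulus for $\psi$; the argument of $\rho_0$ is $d/r^2$, not $d/r$. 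Correspondingly, in the H\"older recipe $r=\tau^\theta$ the exponent must satisfy $\theta\in(0,1/2)$ (further constrained by optimization), not merely $\theta\in(0,1)$ as you state.
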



\begin{proof}
We can reduce it to the case when $C_0 = 0$ by adding to $u$ a sufficiently smooth function $p$ that makes the right hand side of the equation for $u+p$ smaller than zero. Take for instance $p(x,t) = -C(4-|x|^2)^+$ for some $C$ large enough independent of $\s$ (but depending on $\s_0$).

Fix $(x,t)\in \p B_1\times(-1,0]$, a radius $r\in(0,1)$, $x_r = (1+r)x$ and consider the following barrier constructed from $\psi$ and $\k$ in Lemma \ref{barrier}.
\begin{align*}
 \b_r(y,s) = u(x,t) + \rho(3r\vee\k r^{\s_0}) + 2\|u\|_\8\psi\1\frac{y-x_r}{r},\frac{s-t}{r^\s}\2.
\end{align*}

As a combination of dilation and translations of $\psi$ we have that
\begin{align*}
 (\b_r)_t - \cM^+_{\cL_0}\b_r \geq 0 \text{ in $(\R^n \sm B_r(x_r)) \times [-\8,t] \ss B_1 \times [-1,t]$}.
\end{align*}
By the comparison principle, we have that
\begin{align}\label{barrier_comparison}
\b_r &\geq u \text{ in $\R^n\times[-1,t]$},\\
\nonumber\text{ given that } \b_r &\geq u \text{ in $((\R^n \sm B_1)\times[-1,t]) \cup (B_1\times\{-1\})$.}
\end{align}
We split $((\R^n \sm B_1)\times[-1,t]) \cup (B_1\times\{-1\})$ in two regions, around $x$ and away from $x$, respectively,
\begin{align*}
&((\R^n \sm B_1)\times[-1,t]) \cup (B_1\times\{-1\}) \ss\\
&(B_{3r}(x)\times[(t-\k r^\s)\vee-1,t]) \sm (B_1\times(-1,0])\\
&{} \cup (\R^n\times[-1,t]) \sm(B_{2r}(x_r)\times[t-\k r^\s,t]).
\end{align*}
On the first region we use the modulus of continuity for $u$,
\begin{align*}
\b_r(y,s) &\geq u(x,t) + \rho(3r\vee\k r^{\s_0}),\\
&\geq u(x,t) + \rho(3r\vee\k r^\s),\\
&\geq u(y,s).
\end{align*}
Over the second region we use that $\psi \geq 1$ in $\R^n\times(-\8,0]\sm(B_2\times[-\k,0])$. By the rescaling, we get that in $(\R^n\times(-\8,t]) \sm(B_{2r}(x_r)\times[t-\k r^\s,t])$,
\begin{align*}
\b_r(y,s) &\geq u(x,t) + 2\|u\|_\8 \geq u(y,s).
\end{align*}
We have then proved \eqref{barrier_comparison}. 

Let $\rho_0$ be a modulus of continuity for $\psi$. We construct $\bar\rho$ in the following way,
\begin{align*}
 \bar\rho(d) = \inf_{r\in(0,1)}\3\rho(3r\vee\k r^{\s_0}) + 2\|u\|_\8\rho_0\1d/r^2\2\4.
\end{align*}
Notice that $\rho_0\1d/r^2\2 \geq \psi\1\frac{y-(1+r)x}{r},\frac{s-t}{r^\s}\2$ for $d = |x-y|\vee(t-s)$. Also $\bar\rho$ is positive, decreasing and concave by being and infimum combination of functions of the same type. It also goes to zero when $d$ goes to zero.
\end{proof}


\begin{lemma}\label{bddry2}
 Let $\s\geq\s_0 > 0$. Let $u:\R^n\times[-1,0]\to \R$ a bounded function such that
\begin{alignat*}{2}
u_t - \cM^+_{\cL_0}u &\leq C_0 &&\text{ in $B_1\times[-1,0]$},\\
u_t - \cM^-_{\cL_0}u &\geq-C_0 &&\text{ in $B_1\times[-1,0]$}.
\end{alignat*}
Let $\rho$ be a modulus of continuity such that
\begin{align*}
|u(x,t) - u(y,s)| \leq \rho(|x-y|\vee|t-s|)
\end{align*}
for every $(x,t) \in \p B_1\times(-1,0]$ and $(y,s)\in \R^n\times[-1,t]$. Then there is another modulus of continuity $\bar\rho$ so that
\begin{align*}
|u(x,t) - u(y,s)| \leq \bar\rho(|x-y|\vee|t-s|)
\end{align*}
for every $(x,t)\in B_1\times(-1,0]$ with $(1-|x|)^{\s_0} \leq (1+t)$ and $(y,s)\in \R^n\times(-1,t]$. The modulus of continuity $\bar\rho$ depends only on $\rho$, $\L$, $\s_0$, $n$, $\|u\|_\8$ and $C_0$.
\end{lemma}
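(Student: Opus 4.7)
The plan is to split the argument by comparing $R := |x - y| \vee |t - s|$ with $d := 1 - |x|$, and to apply the interior H\"older regularity theorem stated earlier after a suitable rescaling. Let $\bar x := x/|x| \in \p B_1$ (or any point of $\p B_1$ if $x = 0$). In the \emph{far regime} $R \geq d/8$, the bound follows from the hypothesis modulus anchored at $\bar x$ plus the triangle inequality: $|u(x,t) - u(y,s)| \leq \rho(d) + \rho((d + |x-y|) \vee |t-s|) \leq 2\rho(9R)$, since $d \leq 8R$ and $|\bar x - y| \leq d + |x-y| \leq 9R$.

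For the \emph{close regime} $R < d/8$, note that the hypothesis $(1-|x|)^{\s_0} \leq 1+t$ together with $\s \geq \s_0$ and $d \leq 1$ forces $(d/2)^\s \leq d^{\s_0} \leq 1+t$, so the parabolic cylinder $B_{d/2}(x) \times (t - (d/2)^\s, t]$ lies in $B_1 \times (-1, 0]$. Define the rescaled, normalized function $v(z, \tau) := u(x + (d/2) z,\, t + (d/2)^\s \tau) - u(\bar x, t)$, which satisfies on $B_1 \times (-1, 0]$ the same pair of extremal inequalities (with right-hand side $(d/2)^\s C_0 \leq C_0$). The hypothesis applied with anchor $\bar x$ gives the pointwise bound
\begin{align*}
|v(z, \tau)| \leq \rho\1 (d + (d/2)|z|) \vee (d/2)^\s |\tau|\2,
\end{align*}
so $\|v\|_{L^\8(B_1 \times (-1, 0])} \leq \rho_d := \rho((3d/2) \vee (d/2)^\s) \to 0$ as $d \to 0$. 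For the weighted tail $\|v(\cdot, \tau)\|_{L^1(\w)}$, a standard truncation separates $|z| \leq 1$ (use the pointwise bound), intermediate $|z|$ (a change of variables reduces the contribution to $d^{\s_0}\int \rho(u)\, u^{-1-\s_0}\,du$ over the relevant range), and large $|z|$ (use $|v| \leq 2\|u\|_\8$), yielding $\|v\|_{C(-1, 0; L^1(\w))} \leq M(d)$ with $M(d) \to 0$. The interior H\"older theorem applied to $v$ then gives, for $(z, \tau) \in B_{1/2} \times (-1/2, 0]$,
\begin{align*}
|v(0, 0) - v(z, \tau)| \leq C\, \tilde M(d) \1|z| + |\tau|^{1/\s}\2^\a, \qquad \tilde M(d) := \rho_d + M(d) + (d/2)^\s C_0,
\end{align*}
with $\a$ the universal interior H\"older exponent. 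Undoing the rescaling and using $|y - x| + |t - s|^{1/\s} \leq 2\, R^{\min(1, 1/\s)}$ for $R \leq 1$ produces $|u(x, t) - u(y, s)| \leq C'\, \tilde M(d)\, R^\gamma / d^\a$ with $\gamma := \a \min(1, 1/\s)$.

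Combining both regimes, set $\bar\rho(R) := 2\rho(9R) + C' \sup_{d \in (8R, 1]} \tilde M(d)\, R^\gamma / d^\a$. The main technical obstacle is showing that this supremum is itself a modulus of $R$: at $d \approx 8R$ the factor $R^\gamma/d^\a \sim R^{\gamma - \a}$ can blow up (when $\s > 1$), but is compensated by the smallness of $\tilde M(8R)$; at $d \approx 1$, it reduces to $R^\gamma \to 0$ with $\tilde M(1)$ bounded. A balancing argument, together with the freedom to shrink the universal exponent $\a$ so that the decay rate of $\tilde M$ dominates $\a - \gamma$, yields the required modulus $\bar\rho$ depending only on $\rho, \L, \s_0, n, \|u\|_\8, C_0$.
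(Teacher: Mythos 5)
Your overall strategy matches the paper's: anchor at a boundary point, dichotomize by whether the target point is close to $(x,t)$ relative to the distance $d=1-|x|$ to the boundary, handle the far case by the hypothesis and the triangle inequality, and handle the close case by a rescaled interior H\"older estimate. However, there is a genuine gap in the way you set up the close regime, and it cannot be repaired by the remedy you suggest at the end.

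You define the close regime using the \emph{Euclidean} gauge $R:=|x-y|\vee|t-s|<d/8$, but the interior estimate applies (after rescaling $B_{d/2}(x)\times(t-(d/2)^\s,t]$ to $B_1\times(-1,0]$) only on the parabolic half-cylinder $B_{1/2}\times(-1/2,0]$, that is, on $B_{d/4}(x)\times\bigl(t-\tfrac12(d/2)^\s,t\bigr]$ in the original variables. When $\s>1$ and $d$ is small, the inclusion $\{|t-s|<d/8\}\ss\{|t-s|<\tfrac12(d/2)^\s\}$ fails (it requires $d^{1-\s}\leq 2^{2-\s}$), so there are points in your close regime at which the interior estimate simply does not apply; the rescaled time variable $\tau$ falls outside $(-1/2,0]$. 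Relatedly, even where the estimate does apply, you are left with the factor $R^\g/d^\a$ with $\g=\a/\s<\a$ when $\s>1$; with $R\sim d$ this behaves like $d^{\g-\a}\to\8$ as $d\to0$, and you would need $\tilde M(d)$ to decay faster than $d^{\a-\g}$. Since $\tilde M(d)$ is built from the arbitrary modulus $\rho$ (e.g., $\rho(s)=1/\log(1/s)$), no choice of a smaller universal exponent can guarantee this domination; the ``shrink $\a$'' step is therefore not available.

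The fix is the one the paper uses: split according to the \emph{parabolic} distance $P:=|x-y|\vee|t-s|^{1/\s}$, declaring the close regime to be $P<d/2$ (equivalently $(y,s)\in B_{d/2}(x)\times(t-(d/2)^\s,t]$). After rescaling, the interior estimate yields $|u(x,t)-u(y,s)|\leq C\,\tilde M(d)\,(P/d)^\a$, and now $(P/d)^\a\leq 2^{-\a}$ is \emph{bounded}, so the supremum $\sup_{d\geq 2P}\tilde M(d)(P/d)^\a$ tends to $0$ as $P\to0$ by the dichotomy ``either the maximizing $d$ goes to $0$, whence $\tilde M(d)\to0$, or it stays bounded below, whence $(P/d)^\a\to0$.'' The far regime becomes $P\geq d/2$, where $d\leq 2P\leq 2(|x-y|\vee|t-s|^{1/2})$ (using $\s<2$) and the triangle inequality through $\bar x$ gives $2\rho(3(|x-y|\vee|t-s|)^{1/2})$. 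One then converts to a modulus in $|x-y|\vee|t-s|$ by the elementary inequality $P\leq |x-y|\vee|t-s|^{1/2}$ for $|t-s|\leq1$.
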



\begin{proof}
Let $x_0 \in \p B_1$ such that $r := |x-x_0| = \dist(x,\p B_1)$ and assume $(y,s) \in B_1\times(-1,t]$. We split the proof in this case into two situations, depending if $(y,s)$ is contained or not in $B_{r/2}(x)\times[t-(r/2)^\s,t]$. Notice that $B_r(x)\times[t-r^\s,t]$ is contained in $B_1\times[-1,0]$ because $(1-|x|)^\s \leq (1-|x|)^{\s_0} \leq (1+t)$.

If $(y,s) \notin B_{r/2}(x)\times[t-(r/2)^\s,t]$ then we have that,
\begin{align*}
r/2 \leq |x-y|\vee(t-s)^{1/\s} \leq |x-y|\vee(t-s)^{1/2}.
\end{align*}
Now we use the modulus of continuity from $(x_0,t)$,
\begin{align*}
|u(x,t)-u(y,s)| &\leq \rho(r) + \rho(|x_0-y|\vee(t-s)),\\
&\leq \rho(2(|x-y|\vee(t-s)^{1/2})) + \rho((|x-y|+r)\vee(t-s)),\\
&\leq \rho(2(|x-y|\vee(t-s)^{1/2})) + \rho(3(|x-y|\vee(t-s)^{1/2})),\\
&\leq 2\rho(3(|x-y|\vee(t-s))^{1/2}).
\end{align*}

If $(y,s) \in B_{r/2}(x)\times[t-(r/2)^\s,t]$ then we use the interior estimates in $B_r(x)\times [t-r^\s,t]$. Rescale $B_r(x)\times[t-r^\s,t]$ to $B_1\times[-1,0]$. So that,
\begin{align*}
v(\xi,\tau) = u(x + r\xi, t + r^\s \tau) - u(x_0,t),
\end{align*}
satisfies,
\begin{alignat*}{2}
v_t - \cM^+_{\cL_0}v &\leq r^\s C_0 &&\text{ in $B_1\times[-1,0]$},\\
v_t - \cM^-_{\cL_0}v &\geq-r^\s C_0 &&\text{ in $B_1\times[-1,0]$}.
\end{alignat*}
Moreover, the modulus of continuity from $(x_0,t)$ allows us to control the norms $\|v\|_{L^\8(B_1\times[-1,0])}$ and $\|v\|_{C(-1,0,L^1(\w))}$.
\begin{align*}
\|v\|_{L^\8(B_1\times[-1,0])} &\leq \rho(2r\vee r^{\s_0}),\\
\|v\|_{C(-1,0,L^1(\w))} &\leq \int \frac{\rho(r(1+|\xi|)\vee r^{\s_0})}{1+|\xi|^{n+\s_0}}d\xi := I_r.
\end{align*}
Notice that $I_r$ goes to zero as $r$ goes to zero by monotone convergence. Here we are assuming that  every modulus of continuity $\r$ is decreasing and bounded.

By the interior estimates applied to $v$ we get that for $(y,s) \in B_{r/2}(x)\times[t-(r/2)^\s,t],$
\begin{align*}
|u(y,s)-u(x,t)| &= |v(\xi,\tau) - v(0,0)|,\\
&\leq C(r^\s C_0 + \rho(2r\vee r^{\s_0}) + I_r)(|\xi|^\a \vee |\tau|^{\a/\s}),\\
&\leq C\frac{m_r}{r^\a}\1|x-y|\vee(t-s)^{1/\s}\2^\a,
\end{align*}
where $m_r = r^\s C_0 + \rho(2r\vee r^{\s_0}) + I_r$. This gives us a modulus of continuity in terms of $d = |x-y|\vee(t-s)^{1/\s}$ if the following quantity goes to zero as $d$ goes to zero,
\begin{align*}
\sup_{r\in[2d,1]}\frac{m_r d^\a}{r^\a}.
\end{align*}
The quotient $d^\a/r^\a$ is bounded by above by $2^{-\a}$. If the supremum above is taken for some sequence of $r$'s going to zero, then $m_r$ goes to zero and also the expression. If the $r$'s remain away from zero then the supremum goes to zero because $d$ goes to zero. Then we have a modulus $\bar \rho$, independent of $\s$, so that
\begin{align*}
 |u(y,s)-u(x,t)| \leq \bar\rho(|x-y|\vee(t-s)^{1/\s}).
\end{align*}
This still depends on $\s$ but we can take care of that by noticing that,
\begin{align*}
 \bar\rho(|x-y|\vee(t-s)^{1/\s}) \leq \bar\rho(|x-y|\vee(t-s)^{1/2}).
\end{align*}

\end{proof}


\begin{lemma}\label{ival1}
 Let $\s\geq\s_0 > 0$. Let $u:\R^n\times[-1,0]\to \R$ a bounded function such that
\begin{alignat*}{2}
u_t - \cM^+_{\cL_0}u &\leq C_0 &&\text{ in $B_1\times[-1,0]$}.
\end{alignat*}
Let $\rho$ be a modulus of continuity for $u$ such that
\begin{align*}
u(y,s) - u(x,-1) \leq \rho(|x-y|\vee (s+1))
\end{align*}
for every $(x,-1) \in B_1\times\{-1\}$ and $(y,s)\in ((\R^n \sm B_1)\times[-1,0]) \cup (B_1\times\{-1\})$. Then there is another modulus of continuity $\bar\rho$ so that
\begin{align*}
u(y,s) - u(x,-1) \leq \bar\rho(|x-y|\vee (s+1))
\end{align*}
for every $(x,-1) \in B_1\times\{-1\}$ and $(y,s)\in \R^n\times[-1,0]$. The modulus of continuity $\bar\rho$ depends only on $\rho$, $\L$, $\s_0$, $n$, $\|u\|_\8$ and $C_0$.
\end{lemma}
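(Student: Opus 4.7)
The proof follows the same blueprint as Lemma~\ref{bddry1}: normalize to $C_0 = 0$, build a one-parameter family of supersolution barriers $\{\b_r\}_{r\in(0,1)}$ anchored at $(x,-1)$, apply the comparison principle, and extract the modulus by taking an infimum in $r$. The essential new difficulty is that the barrier $\psi$ from Lemma~\ref{barrier} has its vanishing region located at the top of its time interval ($B_1\times\{0\}$), so no translation-and-rescaling of $\psi$ can produce a supersolution in $B_1\times(-1,0]$ that is simultaneously small at an initial-time point $(x,-1)$ (possibly in the interior of $B_1$) and large elsewhere. One must construct an ad hoc barrier.

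After the usual reduction to $C_0=0$ and $u(x,-1)=0$ (adding a paraboloid $-C(4-|y|^2)^+$ and subtracting a constant), for each $r\in(0,1)$ I would take
\[
\b_r(y,s) = \r\bigl(3r \vee \k r^{\s_0}\bigr) + 2\|u\|_\8\,\eta\bigl((y-x)/r\bigr) + M(s+1),
\]
with $\eta$ a fixed smooth radial cutoff satisfying $\eta\equiv 0$ on $\bar B_1$, $\eta\equiv 1$ outside $B_2$, and $M = C(n,\L,\s_0)\|u\|_\8\,r^{-\s}$ chosen so that $M \geq 2\|u\|_\8\|\cM^+_{\cL_0}[\eta(\cdot/r)]\|_\8$. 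Since constants and $s$-linear terms are annihilated by $\cM^+_{\cL_0}$ and $\|\cM^+_{\cL_0}[\eta(\cdot/r)]\|_\8\leq Cr^{-\s}$ by the scaling for $\cL_0$, this makes $\b_r$ a classical supersolution in $B_1\times(-1,0]$. To verify $\b_r\geq u$ on the parabolic complement $((\R^n\sm B_1)\times[-1,0])\cup(B_1\times\{-1\})$, I split into a far region $|y-x|\geq 2r$, where $\eta=1$ forces $\b_r\geq 2\|u\|_\8$ and dominates any modulus, and a near region $|y-x|<2r$, where the hypothesis gives $u(y,s)-u(x,-1)\leq\r(2r\vee(s+1))$ and this is absorbed by $\r(3r\vee\k r^{\s_0})$ together with $M(s+1)$ (the time-linear piece handling $s+1\gtrsim r^{\s_0}$).

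The parabolic comparison principle then yields $u\leq\b_r$ in $B_1\times[-1,0]$. For a generic $(y,s)\in\R^n\times[-1,0]$, setting $d:=|x-y|\vee(s+1)$ and using that $\eta$ is radial and increasing, I define
\[
\bar\r(d) := \inf_{r\in(0,1)}\bigl\{\r(3r\vee\k r^{\s_0}) + 2\|u\|_\8\,\eta(d/r) + C\|u\|_\8\,r^{-\s}d\bigr\}.
\]
Choosing $r=d^{\s_0/4}$ forces each of the three summands to $0$ as $d\to 0^+$, uniformly in $\s\in[\s_0,2)$: the $\r$-term because $r\to 0$; the $\eta(d/r)$ term because $d/r=d^{1-\s_0/4}\to 0$ with positive exponent (so eventually $d\leq r$ and $\eta(d/r)=0$); and the time-linear term because $r^{-\s}d=d^{1-\s\s_0/4}\to 0$, whose exponent is positive for all $\s\in[\s_0,2)$ since $\s_0<2$. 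This yields a valid modulus $\bar\r$ depending only on $\r$, $\L$, $\s_0$, $n$, $\|u\|_\8$ and $C_0$.

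The main obstacle is the near-region boundary comparison, where the supersolution requirement $M\gtrsim\|u\|_\8\,r^{-\s}$ and the need to dominate $\r(|y-x|\vee(s+1))$ must be balanced uniformly in $\s\in[\s_0,2)$. The verification is most delicate when $x$ lies within $2r$ of $\p B_1$, so the annular sliver $\{r<|y-x|<2r\}\cap(\R^n\sm B_1)$ is nonempty and both the spatial cutoff and the time-linear term are simultaneously needed; it is here that the argument really departs from Lemma~\ref{bddry1}, since no $\psi$-type barrier is available at the initial time and the extra time-linear piece (together with the $\s_0$-dependent choice of rescaling radius) is precisely what replaces it.
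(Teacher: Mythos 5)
Your proof is correct and follows essentially the same approach as the paper's: reduce to $C_0=0$, build an ad hoc barrier consisting of a constant $\rho$-term, a rescaled spatial cutoff multiplied by $2\|u\|_\8$, and a time-linear term with coefficient scaling like a negative power of $r$, verify the supersolution property via the scaling of $\cL_0$, compare on the parabolic boundary, and extract $\bar\rho$ as an infimum over $r$. The only cosmetic difference is that the paper packages the cutoff and part of the time-linear term into a function $\psi(y,s)=b(y)+\|\cM^+_{\cL_0}b\|_\8 s$ and adds an extra $(s+1)/r$ term so that the boundary threshold in time is $r$ (matching the spatial threshold and allowing the constant to be simply $\rho(r)$), whereas you keep $M(s+1)$ with $M\sim r^{-\s}$ and compensate by enlarging the constant to $\rho(3r\vee\kappa r^{\s_0})$.
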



\begin{proof}
Assume as in the proof of Lemma \ref{bddry1} that $C_0 = 0$. Fix $(x,-1)\in B_1\times\{-1\}$. Let $b :\R^n \to [0,1]$ a smooth bump function such that $\supp(1-b) = B_1$ and $b(0) = 0$. Then $\psi_t - \cM^+_{\cL_0}\psi \geq 0$ for,
\begin{align*}
 \psi(y,s) = b(y) + \|\cM^+_{\cL_0}b\|_\8 s.
\end{align*}

Consider the following barrier for a given radius $r\in(0,1)$,
\begin{align*}
\b_r(y,s) = u(x,-1) + \rho(r) + 2\|u\|_\8 \3 \psi \1\frac{y-x}{r},\frac{s+1}{r^\s}\2 + \frac{s+1}{r}\4.
\end{align*}
$\b_r$ it is constructed such that
\begin{align*}
(\b_r)_t - \cM^+_{\cL_0}\b_r \geq 0 \text{ in $\R^{n+1}$}.
\end{align*}
Let's see that $\b_r \geq u$ in $((\R^n\sm B_1)\times[-1,0]) \cup (B_1\times\{-1\})$ and therefore also in $\R^n\times[-1,0]$ by the maximum principle. If $|x-y|\vee(s+1) \leq r$ then $\b_r(y,s) \geq u(x,-1) + \rho(r) \geq u(y,s)$. On the other side, if $|x-y|\vee(s+1) \geq r$ then either $|x-y| \geq r$ and then $b((y-x)/r) = 1$ or $(s+1) \geq r$ and then $(s+1)/r \geq 1$, in any case
\begin{align*}
 \b_r(y,s) \geq -\|u\|_\8 + 2\|u\|_\8 \3 \psi \1\frac{y-x}{r},\frac{1+s}{r^\s}\2 + \frac{s+1}{r}\4 \geq u(y,s).
\end{align*}

Having that $\b_r \geq u$ we get that the following modulus of continuity satisfies the conclusion of the lemma
\begin{align*}
 \bar\rho(d) = \inf_{r\in(0,1)} \3\rho(r) + 2\|u\|_\8\1\rho_0 \1d/r^2\2+d/r\2\4
\end{align*}
where $\rho_0$ is a modulus of continuity for $\psi(y,s)$ and $d=|x-y|\vee(s+1)$.
\end{proof}


\begin{lemma}\label{ival2}
 Let $\s\geq\s_0 > 0$. Let $u:\R^n\times[-1,0]\to \R$ a bounded function such that
\begin{alignat*}{2}
u_t - \cM^+_{\cL_0}u &\leq C_0 &&\text{ in $B_1\times[-1,0]$}.
\end{alignat*}
Let $\rho$ be a modulus of continuity for $u$ such that
\begin{align*}
|u(y,s) - u(x,-1)| \leq \rho(|x-y|\vee (s+1))
\end{align*}
for every $(x,-1) \in B_1\times\{-1\}$ and $(y,s)\in \R^n\times[-1,0]$. Then there is another modulus of continuity $\bar\rho$ so that
\begin{align*}
|u(y,s) - u(x,t)| \leq \bar\rho(|x-y|\vee (t-s))
\end{align*}
for every $(x,t)\in B_1\times[-1,0]$ with $(1-|x|)^{\s_0} \geq (1+t)$ and $(y,s)\in \R^n\times[-1,t]$. The modulus of continuity $\bar\rho$ depends only on $\rho$, $\L$, $\s_0$, $n$, $\|u\|_\8$ and $C_0$.
\end{lemma}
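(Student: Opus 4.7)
The plan is to adapt the proof of Lemma \ref{bddry2}, interchanging the roles of the lateral boundary $\p B_1$ and the initial slice $\{s=-1\}$. For $(x,t)\in B_1\times[-1,0]$ with $(1-|x|)^{\s_0}\geq 1+t$, I set $r:=(1+t)^{1/\s_0}$, so that $r^{\s_0}=1+t$. The hypothesis on $(1-|x|)$ gives $r\leq 1-|x|$, and $\s\geq\s_0$ together with $r\leq 1$ gives $r^\s\leq r^{\s_0}=1+t$, so the parabolic cylinder $B_r(x)\times[t-r^\s,t]$ lies inside $B_1\times[-1,0]$. I split the proof according to whether $(y,s)$ lies in the inner half cylinder $B_{r/2}(x)\times[t-(r/2)^\s,t]$, writing $d':=|x-y|\vee(t-s)^{1/\s}$.

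In the far case $d'\geq r/2$, I apply the hypothesis twice with initial point $(x,-1)\in B_1\times\{-1\}$: once against $(x,t)$ to get $|u(x,t)-u(x,-1)|\leq\rho(t+1)$, and once against $(y,s)$ to get $|u(y,s)-u(x,-1)|\leq\rho(|x-y|\vee(s+1))$. Since $t+1=r^{\s_0}\leq(2d')^{\s_0}$ and $s+1\leq t+1$, the triangle inequality yields a bound by $2\rho(d'\vee(2d')^{\s_0})$, which is a modulus of $d'$ vanishing at $0$. In the close case $d'<r/2$, I rescale via $v(\xi,\tau):=u(x+r\xi,t+r^\s\tau)-u(x,-1)$ to the unit parabolic cylinder. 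Then $v$ satisfies the rescaled extremal inequalities on $B_1\times[-1,0]$ with right-hand side $r^\s C_0$, and the hypothesized modulus $\rho$ controls both $\|v\|_{L^\8}$ by $\rho(r\vee r^{\s_0})$ and $\|v\|_{C(-1,0;L^1(\w))}$ by a tail integral $I_r\to 0$ as $r\to 0$ (by monotone convergence, since $\rho$ is bounded and $\rho(0)=0$). Applying the interior H\"older theorem to $v$ at the origin and undoing the scaling gives $|u(x,t)-u(y,s)|\leq C m_r r^{-\a}(d')^\a$ with $m_r:=r^\s C_0+\rho(r\vee r^{\s_0})+I_r\to 0$.

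Combining both cases exactly as in Lemma \ref{bddry2}, the final bound is controlled by $2\rho(d'\vee(2d')^{\s_0})+\sup_{r\in[2d',1]}C m_r(d'/r)^\a$, which vanishes as $d'\to 0$ by the two-subsequence argument: either $r\to 0$, in which case $m_r\to 0$, or $r$ stays bounded away from $0$, in which case $(d'/r)^\a\to 0$. To express the modulus $\s$-independently in $d=|x-y|\vee(t-s)$, I use $\s\leq 2$ and $t-s\leq 2$ to bound $d'\leq (2d)^{1/2}$. The main technical obstacle is maintaining uniformity in $\s\in[\s_0,2)$: this is what forces the choice $r=(1+t)^{1/\s_0}$ rather than $(1+t)^{1/\s}$, and the final $\s$-free passage from $d'$ to $d$. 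A secondary but necessary point is that the absolute value in the conclusion requires both extremal inequalities on $u$; in particular the supersolution inequality $u_t-\cM^-_{\cL_0}u\geq -C_0$ (presumably implicit in the statement, as in Lemma \ref{bddry2}) is needed to apply the interior H\"older estimate to both $v$ and $-v$.
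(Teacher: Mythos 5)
Your argument follows the same structure as the paper's: pick a parabolic cylinder centered at $(x,t)$ whose bottom reaches $\{s=-1\}$, split according to whether $(y,s)$ lies in the inner half-cylinder, use the hypothesis modulus plus the triangle inequality in the far case and rescale-plus-interior-H\"older in the close case, and finish with the two-subsequence argument from Lemma \ref{bddry2}. The one substantive difference is your choice $r=(1+t)^{1/\s_0}$ (so $r^{\s_0}=1+t$) versus the paper's $r^\s=1+t$. Your choice is in fact the safer one: from $(1-|x|)^{\s_0}\geq 1+t$ you get $r\leq 1-|x|$ directly, and since $\s\geq\s_0$ and $r\leq 1$ also $r^\s\leq r^{\s_0}=1+t$, so $B_r(x)\times[t-r^\s,t]\subset B_1\times[-1,0]$; with the paper's $r=(1+t)^{1/\s}$ one only obtains $r\leq(1-|x|)^{\s_0/\s}$, which can exceed $1-|x|$ when $\s_0<\s$, so strictly speaking an extra cap (e.g. $r=\min\{(1+t)^{1/\s},\,1-|x|\}$) is needed there. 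Your remark that the supersolution inequality $u_t-\cM^-_{\cL_0}u\geq -C_0$ must be added to the hypotheses (it appears in Lemma \ref{bddry2} but is missing in the statement of Lemma \ref{ival2}) is also correct: without it one cannot invoke the interior H\"older theorem for $v$ and $-v$, and only a one-sided conclusion would follow, as in Lemmas \ref{bddry1} and \ref{ival1}.
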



\begin{proof}
Fix $(x,t)\in B_1\times[-1,0]$ with $(1-|x|)^{\s_0} \geq (1+t)$. Let $r^\s=1+t$. As in the proof of Lemma \ref{bddry2} we split it into two situations depending if $(y,s)$ is contained or not in $B_{r/2}(x)\times[t-(r/2)^\s,t]$. In the case when $(y,s) \notin B_{r/2}(x)\times[t-(r/2)^\s,t]$ we estimate in the following way. Recall first that $r/2 \leq |x-y|\vee(t-s)^{1/2}$. Then,
\begin{align*}
|u(y,s)-u(x,t)| &\leq \rho(|x-y|\vee (s+1)) + \rho(r^\s),\\
&\leq \rho(|x-y|\vee r^\s) + \rho(16(|x-y|^{\s_0}\vee (t-s)^{\s_0/2})),\\
&\leq 2\rho(16(|x-y|^{\s_0}\vee (t-s)^{\s_0/2})).
\end{align*}

In the case when $(y,s) \in B_{r/2}(x)\times[t-(r/2)^\s,t]$ we use the interior estimates as in the proof of Lemma \ref{bddry2}. We rescale $B_r(x)\times[t-r^\s,t]$ to $B_1\times[-1,0]$ to obtain
\begin{align*}
|u(y,s)-u(x,t)| &\leq C(\rho(r\vee r^{\s_0}) + r^\s C_0 + I_r)\1\frac{|x-y|\vee(t-s)^{1/\s}}{r}\2^\a,\\
&\leq C\frac{m_r}{r^\a}\1|x-y|\vee(t-s)^{1/\s}\2^\a,
\end{align*}
where $I_r$ is the term that comes from the $C(-1,0,L^1(\w))$ norm and $m_r = \rho(r\vee r^{\s_0}) + r^\s C_0 + I_r$ goes to zero as $r$ goes to zero uniformly in $\s \in[\s_0,2)$. As before this gives a modulus of continuity independent of $\s$.

\end{proof}


\begin{proof}[Proof of Theorem (\ref{bddryreg})]
By applying the four previous lemmas we get a modulus of continuity for $(x,t) \in B_1\times[-1,0]$ and $(y,s) \in \R^n \times[-1,0]$ if $s\leq t$. Now if $s\geq t$ then we only have to consider $y \in \R^n \sm B_1$, otherwise we just have to interchange the points and apply the lemmas again. In this case let $x_0 \in \p B_1$ with $r = \dist(x,\p B_1) = |x-x_0|$. Then
\begin{align*}
|u(x,t)-u(y,s)| &\leq \bar\rho(r) + \rho(|x_0-y|\vee(s-t)),\\
&\leq 2\bar\rho(|x-y|\vee(s-t)).
\end{align*}
This tell us how we need to modify the modulus of continuity to conclude with the theorem.
\end{proof}


\section{Further regularity in time}\label{SFRT}

In this section we impose some regularity to the boundary data in order to get better regularity estimates in the time variable. 



\begin{theorem}\label{regularityint}
Let $u:\R^n\times[-1,0]\to \R$ be a viscosity solution of
\begin{align*}
u_t-Iu=0, \ \ \hbox{in}\ B_1\times(-1,0],
\end{align*}
where $I$ is elliptic with respect to $\cL_0$. Assume that $u$ is Lipschitz in $\R^n\sm B_1\times(-1,0]$ and that for $u_0(x)=u(x,-1)$, $|\cM^\pm_{\cL_0}u_0| < C_0$. Then for $(x,t), (y,s)\in B_{1/2}\times(-1/2,0]$ we have 
\begin{align*}
\frac{|u_t(x,t)-u_t(y,s)|}{(|x-y|+|t-s|^{1/\s})^\a}\leq& C\1\|u\|_{L^\infty(\bar{B}_1\times[-1,0])}+C_0+\|u_t\|_{L^\infty(\R^n\sm B_1\times[-1,0])})\2
\end{align*}
Here $\a>0$ and $C$ only depend on $C_0$ and the ellipticity constants.
\end{theorem}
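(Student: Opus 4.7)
The plan is a time-difference-quotient argument. Assuming $I$ is translation invariant in time (implicit in the hypotheses, since otherwise $u(\cdot,t+h)$ solves no useful equation), for small $h>0$ set $v_h(x,t)=u(x,t+h)-u(x,t)$ on $\R^n\times[-1,-h]$. Since $u(\cdot,\cdot+h)$ satisfies the same equation as $u$, the ellipticity of $I$ yields that $v_h$ is a viscosity solution of the Pucci inequalities
\[
(v_h)_t - \cM^+_{\cL_0}v_h \le 0, \qquad (v_h)_t - \cM^-_{\cL_0}v_h \ge 0,
\]
in $B_1\times(-1,-h]$. The goal is to show $|v_h|\le Kh$ uniformly and then apply the interior H\"older regularity theorem (from the Previous Results) to $v_h/h$, passing to the limit $h\to 0$.

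For the uniform bound, set $K=\max(C_0,\|u_t\|_{L^\8(\R^n\sm B_1\times[-1,0])})$. The lateral bound $|v_h(x,t)|\le Kh$ is immediate from the Lipschitz hypothesis. For the initial time, I would use the barriers $u_0(x)\pm K(t+1)$ on $B_1\times(-1,0]$: since $I$ annihilates functions constant in $x$ and is time-independent, $I(u_0\pm K(t+1))(x,t)=Iu_0(x)$, and ellipticity plus $|\cM^\pm_{\cL_0}u_0|\le C_0$ yields $|Iu_0|\le C_0\le K$ on $B_1$. These barriers are thus super/sub-solutions of $u_t-Iu=0$, agree with $u$ at $t=-1$, and by the choice of $K$ dominate/are dominated by $u$ on the lateral boundary. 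Comparison then gives $|u(x,t)-u_0(x)|\le K(t+1)$ on $B_1\times(-1,0]$, so in particular $|v_h(x,-1)|\le Kh$ on $B_1$. Since constants are solutions of the Pucci extremal equations, comparing $v_h$ with $\pm Kh$ using the initial and lateral bounds yields $|v_h|\le Kh$ everywhere on $\R^n\times[-1,-h]$.

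Applying the interior H\"older regularity theorem to $w_h:=v_h/h$, whose $L^\8$ and $C(-1,-h;L^1(\w))$ norms are bounded by $CK$ uniformly in $h$ (using the integrability of $\w$), gives after a standard rescaling (justified because $\cL_0$ is invariant under the parabolic scaling $(x,t)\mapsto(\b x,\b^\s t)$)
\[
|w_h(x,t)-w_h(y,s)|\le CK(|x-y|+|t-s|^{1/\s})^\a
\]
for $(x,t),(y,s)\in B_{1/2}\times(-1/2,-h]$, with universal $\a,C$. As $h\to 0^+$, $w_h\to u_t$ pointwise a.e.\ (the existence and $L^\8$-bound of $u_t$ on $B_{1/2}\times(-1/2,0]$ follow from the preceding step, since $u$ is uniformly Lipschitz in time there), and the H\"older estimate passes to the limit, yielding the theorem. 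The main obstacle I expect is Step 1: rigorously interpreting $|\cM^\pm_{\cL_0}u_0|\le C_0$ in the viscosity sense so that the initial-time barriers are valid sub/supersolutions, together with the implicit assumption of translation-invariance of $I$ in time; without the latter, $v_h$ fails to satisfy the extremal inequalities and an additional error term involving the time modulus of $I$ would have to be absorbed into the right-hand side.
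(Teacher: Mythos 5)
Your proposal is correct and follows essentially the same route as the paper's proof: build Lipschitz control at $t=-1$ by comparison with the barriers $u_0(x)\pm M(1+t)$, propagate it to all times by applying the maximum principle to the difference quotient $v_h$, then apply the interior H\"older estimate to $v_h/h$ and pass to the limit. Your closing observation that $I$ must implicitly be translation invariant in time (so that $v_h$ satisfies the Pucci extremal inequalities) is exactly the hypothesis the paper invokes when it says $w$ is ``the difference of two solutions of a translation invariant equation.''
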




As pointed out before, at the end of Section 2, $C^{1,\a}$ regularity in time is not always available. By the non locality, any discontinuity it is immediately seen by the solution. In order to bypass such obstacle we decided to ask enough regularity to the solution and use the comparison principle to propagate such regularity to the interior of the domain. After doing this we can improve the regularity a little bit more by the interior estimates.

\begin{proof}[Proof of Theorem \ref{regularityint}]
Define $M$ by
\begin{align*}
M=\max\{C_0, \|u_t\|_{L^{\infty}(\R^n\sm B_1\times(-1,0])}\},
\end{align*}
and construct the barrier
\begin{align*}
\varphi(x,t)=u_0(x)+M(1+t).
\end{align*}
Note now that $\varphi$ satisfies, 
\begin{align*}
\varphi_t(x,t)-\cM^+_{\cL_0}\varphi(x,t)=M-\cM^+_{\cL_0}u_0\geq 0.
\end{align*}
Moreover $\varphi\geq u$ in $\R^n\sm B_1\times[-1,0]\cup \R^n\times\{-1\}$. Hence by the comparison principle we conclude $\varphi\geq u$ in $\R^n\times[-1,0]$. In particular this implies
\begin{align*}
u(x,t)-u(x,-1)\leq M(t+1).
\end{align*}
To get the other side of the bound just consider $\varphi(x,t)=u_0(x)-M(1+t)$ instead. This gives us Lipschitz regularity at $t=-1$.

To get Lipschitz regularity for $t>-1$ we define
\begin{align*}
w(x,t)=u(x,t+h)-u(x,t),
\end{align*}
for $t$ in $(-1,-h]$ and $x$ in $\R^n$. From the previous computations $w(x,-1)\leq Mh$. Also, since $u$ is Lipschitz in $\R^n\sm B_1\times(-1,0]$, we have the estimate $w(x,t)\leq Mh$ in $(\R^n\sm B_1)\times(-1,-h]$. Note that $w$ satisfies  $w_t-\cM^+w\leq 0$ (see Theorem 2.1 in \cite{CD}). We conclude by the maximum principle that
\begin{align*}
\max\limits_{\R^n\times[-1,-h]} w&\leq \max\limits_{(\R^n\sm B_1)\times[-1,h]}\\
&\leq Mh,
\end{align*}
which gives us one side of the inequality. To get the other inequality we consider $w=u(x,t)-u(x,t+h)$ instead.

Finally for $h>0$ define
\begin{align*}
w(x,t) = \frac{u(x,t+h)-u(x,t)}{h},
\end{align*}
and thanks to the previous computations we know that $w$ is bounded in $\R^n \times (-1,-h]$, uniformly in $h$, and therefore also belongs to $L^1(\w)$ uniformly in time. Moreover $w$ satisfies the following equations by being the difference of two solutions of a translation invariant equation:
\begin{alignat*}{2}
w_t - \cM^+_{\cL_0} w &\leq 0 &&\text{ in $B_1\times(-1,-h]$},\\
w_t - \cM^-_{\cL_0}w &\geq 0 &&\text{ in $B_1\times(-1,-h]$}.
\end{alignat*}
We can use now the interior H\"older regularity from \cite{CD} to conclude that $w$ is $C^\a(B_{1/2}\times(-1/2,-h])$ uniformly in $h$. Passing to the limit $h\to 0$ we conclude the desired result.
\end{proof}

\begin{remark}
The proof in Theorem \ref{regularityint} works also in the case when the boundary data has a general modulus of continuity $\rho(\tau)$ in time. In the interior the solution will have a better modulus of continuity. It would be of the form $\tau^\a\rho(\tau)$ if $\limsup_{\t\to0}\tau^{\a-1}\rho(\tau) > 0$ or it would imply a modulus of continuity for $u_t$ of the form $\tau^{\a-1}\rho(\tau)$ if $\limsup_{\t\to0}\tau^{\a-1}\rho(\tau) = 0$. See Lemma 5.6 in \cite{CC}.
\end{remark}



\section{Stability and Compactness}\label{SSC}

\begin{definition}[Weak convergence for non local operators]
We say that a sequence $\{I_k\}$ of continuous operators in $\W\times(-T,0]$ converges weakly to an operator $I$ in $\W\times(-T,0]$ if for every test function $(v,B_\r(x)\times(t-\t,t])\in S$ we have that $I_kv \to Iv$ uniformly in $B_{\r/2}(x)\times(t-\t/2,t]$.
\end{definition}

\begin{lemma}
If a sequence $\{I_k\}$ of continuous operators in $\W\times(-T,0]$ converges weakly to an operator $I$ in $\W\times(-T,0]$ then $I$ is also continuous in $\W\times(-T,0]$.
\end{lemma}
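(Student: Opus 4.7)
The plan is to exploit the fact that the class $S$ of test functions is stable under shrinking the cylinder: if $(v,B_r(x_0)\times(t_0-\tau,t_0])\in S$ and $B_{r'}(x_1)\times(t_1-\tau',t_1]\ss B_r(x_0)\times(t_0-\tau,t_0]$, then $(v,B_{r'}(x_1)\times(t_1-\tau',t_1])$ is again in $S$, because $v$ restricted to the smaller cylinder is still a quadratic parabolic polynomial and the requirement $v\in C(t_1-\tau',t_1;L^1(\w))$ follows from $v\in C(t_0-\tau,t_0;L^1(\w))$.

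Granted this, I would fix an arbitrary test function $(v,B_r(x_0)\times(t_0-\tau,t_0])\in S$ and an arbitrary point $(x_1,t_1)$ in its cylinder, with the goal of showing that $Iv$ is parabolically continuous at $(x_1,t_1)$. I would pick $r',\tau'>0$ so that $B_{r'}(x_1)\times(t_1-\tau',t_1]\ss B_r(x_0)\times(t_0-\tau,t_0]$, produce the new test function $(v,B_{r'}(x_1)\times(t_1-\tau',t_1])\in S$ by the observation above, and apply the weak convergence hypothesis to this new test function. This yields $I_k v\to Iv$ uniformly on $B_{r'/2}(x_1)\times(t_1-\tau'/2,t_1]$, which is a parabolic neighborhood of $(x_1,t_1)$.

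Each $I_k$ being continuous on $\W\times(-T,0]$, every $I_k v$ is parabolically continuous on the cylinder of any test function it is evaluated on, in particular on $B_{r'/2}(x_1)\times(t_1-\tau'/2,t_1]$. A uniform limit of parabolically continuous functions is parabolically continuous, hence $Iv$ is continuous at $(x_1,t_1)$. Since $(x_1,t_1)$ was arbitrary in $B_r(x_0)\times(t_0-\tau,t_0]$, $Iv$ is continuous on the entire cylinder, which is the required property of a continuous non local operator. I do not foresee any genuine obstacle: the only step needing verification is the stability of $S$ under shrinking the cylinder, and this is a direct read-off from the definition.
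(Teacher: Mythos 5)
Your proof is correct and follows essentially the same route as the paper: recenter a smaller test cylinder at the point of interest, invoke weak convergence to get uniform convergence of $I_k v$ to $I v$ there, and conclude continuity of $Iv$ as a uniform limit of continuous functions. The one difference is cosmetic — you explicitly isolate the shrinking-stability of $S$ and cite the ``uniform limit of continuous functions is continuous'' principle, whereas the paper leaves the shrinking implicit and writes out the underlying three-term triangle inequality; the content is identical.
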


\begin{proof}
Let $(v,B_\r(x)\times(t-\t,t]) \in S$, for $(x_i,t_i)\to(x_0,t_0^-) \in B_\r(x)\times(t-\t,t]$ we can assume that $(x_i,t_i) \in B_{r/2}(x_0)\times(t_0-t_1/2,t_0]$ with $B_r(x_0)\times(t_0-t_1,t_0] \ss B_\r(x)\times(t-\t,t]$. We estimate by the triangular inequality,
\begin{align*}
|Iv(x_i,t_i) - Iv(x_0,t_0)| &\leq |Iv(x_i,t_i) - I_kv(x_i,t_i)| + |I_kv(x_i,t_i) - I_kv(x_0,t_0)|\\
&{} + |I_kv(x_0,t_0) - Iv(x_0,t_0)|.
\end{align*}
The first and third term can be made arbitrarily small by the uniform convergence of $I_kv$ to $Iv$ in $B_{r/2}(x_0) \times (t_0 - t_1/2,t_0]$. The second term can also be made arbitrarily small once we fix $k$ by the continuity of $I_k$.
\end{proof}

\begin{remark}
In the proof of Theorem \ref{compactness} and \ref{stacomp} we will need additionally that the Dirichlet problem given by $I$ has unique solutions. Notice that this will actually be inherited from the fact that the limit operator $I$ is also elliptic with respect to $\cL_0$ is the same holds for the sequence $\{I_k\}$.
\end{remark}

\subsection{Stability}

The definition of $\G$-convergence has to be done with respect to the parabolic topology.

\begin{definition}
We say that a sequence of lower semicontinuous functions (in the parabolic topology) $u_k$ $\G$-converge to $u$ in a set $\W\times(-T,0]$ if the two following conditions hold
\begin{enumerate}
\item[(i)] For every sequence $(x_k,t_k)\to(x,t^-)$ in $\W\times(-T,0]$,
\begin{align*}\liminf_{\k\to\infty}u_k(x_k,t_k)\geq u(x,t).
\end{align*}
\item[(ii)] For every $(x,t)\in\W\times(-T,0]$, there is a sequence $(x_k,t_k)\to(x,t^-)$ in $\W\times(-T,0]$ such that
\begin{align*}
\limsup\limits_{k\to\infty}u_k(x_k,t_k)=u(x,t).
\end{align*}  
\end{enumerate}
\end{definition}

\begin{theorem}[Stability]\label{stability}
Let $I_k$ be a sequence of uniformly elliptic operators with respect to $\cL_0$. Let $\{u_k\}_{k\geq 1} \ss LSC(\bar\W\times(-T,0])\cap C(-T,0;L^1(\w))$ such that
\begin{enumerate}
\item[(i)] $(u_k)_t - I_ku_k \geq f_k$ in the viscosity sense in $\W\times(-T,0]$,
\item[(ii)] $u_k \to u$ in the $\G$ sense in $\W\times(-T,0]$ and in $C(-T,0;L^1(\w))$,
\item[(iii)] $f_k \to f$ locally uniformly in $\W\times(-T,0]$,
\item[(iv)] $I_k \to I$ weakly in $\W\times(-T,0]$ (with respect to $\w$),
\item[(v)] $|u_k(x,t)| \leq C$ for every $(x,t) \in \W\times(-T,0]$.
\end{enumerate}
Then also $u_t - Iu \geq f$ in the viscosity sense in $\W\times(-T,0]$.
\end{theorem}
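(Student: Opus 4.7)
Take any test function $(v, Q) \in S$ with $Q = B_r(x_0) \times (t_0 - \tau, t_0]$ touching $u$ from below at $(x_0, t_0) \in \W \times (-T, 0]$; the goal is $v_{t^-}(x_0, t_0) - Iv(x_0, t_0) \geq f(x_0, t_0)$. As a preliminary reduction, replace $v$ by the perturbation $\tilde v = v - \e(|y - x_0|^2 + (t_0 - s))$ inside $Q$ (keeping $\tilde v = u$ outside), which touches $u$ \emph{strictly} from below at $(x_0, t_0)$ on every closed subcylinder of $Q$. Since $\tilde v - v$ is supported in $Q$, vanishes at $(x_0, t_0)$, and is $O(\e)$, an ellipticity/Pucci bound gives $I\tilde v(x_0, t_0) \to Iv(x_0, t_0)$ as $\e \to 0$; it therefore suffices to prove the inequality for $\tilde v$ at each $\e > 0$.

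Next, $\G$-convergence delivers approximate contact points for $u_k$. On a closed subcylinder $\bar Q' \subset Q$ containing $(x_0, t_0)$, the USC function $w_k = \tilde v - u_k$ attains its maximum $c_k$ at some $(x_k, t_k) \in \bar Q'$. The recovery sequence at $(x_0, t_0)$ forces $\limsup_k c_k \geq 0$, while the $\liminf$ condition combined with strict touching forces $(x_k, t_k) \to (x_0, t_0^-)$ and $c_k \to 0$. I then build the test function
\begin{align*}
v_k(y, s) = \begin{cases} \tilde v(y, s) - c_k - \eta\1|y - x_k|^2 + (t_k - s)\2, & (y, s) \in Q_k, \\ u_k(y, s), & (y, s) \notin Q_k, \end{cases}
\end{align*}
with $Q_k = B_{r''}(x_k) \times (t_k - \tau'', t_k]$ for fixed small $r'', \tau''$ (so $Q_k \subset Q'$ eventually) and fixed $\eta > 0$. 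Then $(v_k, Q_k) \in S$ touches $u_k$ strictly from below at $(x_k, t_k)$, so hypothesis (i) yields
\begin{align*}
\tilde v_{t^-}(x_k, t_k) + \eta - I_k v_k(x_k, t_k) \geq f_k(x_k, t_k),
\end{align*}
with $f_k(x_k, t_k) \to f(x_0, t_0)$ by (iii).

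The main obstacle is the passage $I_k v_k(x_k, t_k) \to Iv_\infty(x_0, t_0)$, where $v_\infty \in S$ equals $\tilde v$ on $Q_\infty := B_{r''}(x_0) \times (t_0 - \tau'', t_0]$ and $u$ outside. Decompose
\begin{align*}
I_k v_k(x_k, t_k) - Iv_\infty(x_0, t_0) &= [I_k v_k - I_k v_\infty](x_k, t_k) \\
&\quad + [I_k v_\infty - I v_\infty](x_k, t_k) \\
&\quad + [I v_\infty(x_k, t_k) - I v_\infty(x_0, t_0)].
\end{align*}
The third term vanishes by continuity of the limit operator (the preceding lemma), and the second by the weak convergence $I_k \to I$ applied to the fixed $v_\infty$, uniformly on $B_{r''/2}(x_0) \times (t_0 - \tau''/2, t_0]$. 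For the first, ellipticity gives $|I_k v_k - I_k v_\infty|(x_k, t_k) \leq \max(|\cM^+_{\cL_0}(v_k - v_\infty)|, |\cM^-_{\cL_0}(v_k - v_\infty)|)(x_k, t_k)$, and the integrand splits into three regions of $y$: near the origin, $v_k - v_\infty = O(c_k + \eta)$ and the quadratic vanishing of $\d$ gives $O(c_k + \eta)$; on the shrinking symmetric difference $Q_k \triangle Q_\infty$ (of measure $O(|x_k - x_0| + (t_0 - t_k))$), the difference is uniformly bounded by (v) while the kernel is bounded off $0$, giving $o(1)$; outside $Q_k \cup Q_\infty$, $v_k - v_\infty = u_k - u$, and assumption (iv) together with the $C(-T, 0; L^1(\w))$-convergence control the tail.

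Combining, $\tilde v_{t^-}(x_0, t_0) + \eta - Iv_\infty(x_0, t_0) \geq f(x_0, t_0)$. Since $v_\infty \geq \tilde v$ globally with equality at $(x_0, t_0)$, the same $\d \geq 0$ argument and ellipticity used in the opening reduction yields $I v_\infty(x_0, t_0) \geq I \tilde v(x_0, t_0)$, so $\tilde v_{t^-}(x_0, t_0) + \eta - I\tilde v(x_0, t_0) \geq f(x_0, t_0)$. Sending $\eta \to 0$, then using the first-step convergence $I\tilde v(x_0, t_0) \to Iv(x_0, t_0)$ and $\tilde v_{t^-}(x_0, t_0) = v_{t^-}(x_0, t_0) + \e$ to send $\e \to 0$, yields the desired supersolution inequality $v_{t^-}(x_0, t_0) - Iv(x_0, t_0) \geq f(x_0, t_0)$.
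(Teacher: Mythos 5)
Your proof is correct and follows essentially the same strategy as the paper's: extract approximate contact points $(x_k,t_k)$ for $u_k$ from the $\Gamma$-convergence, paste the test polynomial with $u_k$ to form admissible test functions $v_k$, and pass to the limit by splitting $I_k v_k(x_k,t_k) - I v(x_0,t_0)$ into the same three pieces (ellipticity plus $C(-T,0;L^1(\w))$-convergence for $I_k v_k - I_k v$, weak convergence for $I_k v - I v$, and continuity of $I$ for $I v(x_k,t_k) - I v(x_0,t_0)$). The only real difference is that you introduce the $\e$- and $\eta$-perturbations to enforce strict touching and you recenter the cylinders at $(x_k,t_k)$; the paper's Definition~\ref{viscosity} already builds strictness into the notion of touching, and simply shifts the constant $d_k$ while keeping the cylinder fixed, so your extra perturbations are safe but not strictly necessary. (Small slip: controlling the tail of $v_k-v_\infty$ outside $Q_k\cup Q_\infty$ uses hypothesis (ii), the $C(-T,0;L^1(\w))$-convergence of $u_k$ to $u$, not hypothesis (iv).)
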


\begin{proof}
Let $p$ a parabolic second order polynomial touching $u$ from below at a point $(x,t)$ in a neighborhood $B_r(x)\times(t-r,t]$.

Since $u_k$ $\G$-converges to $u$ in $\W\times(-T,0]$, for large $k$, we can find $(x_k,t_k)\to(x,t^-)$ and $d_k\to0$ such that $p+d_k$ touches $u_k$ at $(x_k,t_k) \in B_r(x)\times(t-r,t]$. If we let
\begin{align*}
v_k &= \begin{cases}
p+d_k &\text{ in } B_r(x)\times(t-r,t],\\
u_k &\text{ in } \R^{n+1} \sm (B_r(x)\times(t-r,t]),
\end{cases},\\
v &= \begin{cases}
p &\text{ in } B_r(x)\times(t-r,t],\\
u &\text{ in } \R^{n+1} \sm (B_r(x)\times(t-r,t]).
\end{cases}
\end{align*}
For $(y,s) \in B_{r/2}(x)\times(t-r/2,t]$,
\begin{align*}
|I_kv_k(y,s) - Iv(x,t)| &\leq |I_kv_k(y,s) - I_kv(y,s)| + |I_kv(y,s) - Iv(y,s)|\\
&{} + |Iv(y,s) - Iv(x,t)|.
\end{align*}

The last term goes to zero by the continuity of $I$. The second term goes to zero too and it does it independently of $(y,s) \in B_{r/2}(x)\times(t-r/2,t]$ by the weak convergence of $I_k$ to $I$. We use the ellipticity to look at the first term,
\begin{align*}
|I_kv_k(y,s) - I_kv(y,s)| &\leq C\int_{\R^n \sm B_{r/2}}\frac{|\d(v_k-v,y,s;z)|}{|z|^{n+\s}}dz,\\
&\leq C(r)(d_k + \|u_k(\cdot,s)-u(\cdot,s)\|_{L^1(\w)}).
\end{align*}
So this term also goes to zero because $u_k \to u$ in $C(-T,0;L^1(\w))$.

Looking at the equation we have
\begin{align*}
(v_k)_{t^-}(x_k,t_k) - I_kv_k(x_k,t_k) \geq f_k(x_k,t_k).
\end{align*}
Sending $k\to\8$ makes all the term go to the respective ones. $(v_k)_{t^-}(x_k,t_k) = v_{t^-}(x,t)$ because of the way they are defined for $(x_k,t_k) \in B_r(x)\times(t-r,t]$, $I_kv_k(x_k,t_k) \to Iv(x,t)$ was just shown and $f_k(x_k,t_k)\to f(x,t)$ is one of the hypothesis. This completes the proof.
\end{proof}


\subsection{Compactness}

\begin{lemma}\label{comp1}
Let $I_k$ be a sequence of continuous, translation invariant elliptic operators with respect to $\cL_0$. Let $(v,B_r\times(-\t,0]) \in S$ be a fixed test function. There is a subsequence $I_{k_j}$ such that $f_{k_j} := v_{t^-} - I_{k_j}v$ converges uniformly in $B_{r/2}\times(-\t/2,0]$.
\end{lemma}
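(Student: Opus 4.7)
The strategy is a standard Arzelà--Ascoli extraction on the compact parabolic cylinder $\bar B_{r/2}\times[-\tau/2,0]$. Since $v$ restricted to $B_r\times(-\tau,0]$ is a parabolic quadratic polynomial, the time derivative $v_{t^-}$ equals the constant coefficient of $t$ on the relevant region and is independent of $k$, so it is enough to produce a subsequence along which $I_{k_j}v$ converges uniformly; then $f_{k_j}=v_{t^-}-I_{k_j}v$ converges too.

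\emph{Uniform boundedness.} Ellipticity of each $I_k$ with respect to $\cL_0$, applied to $v$ versus the zero function, gives $\cM^-_{\cL_0}v(y,t)\le I_k v(y,t)\le \cM^+_{\cL_0}v(y,t)$. On $\bar B_{r/2}\times[-\tau/2,0]$ both bounds are uniformly finite: $v$ is smooth in space there (polynomial on $B_r$), so the singular part of $\cM^\pm v$ integrates up with the usual $(2-\sigma)$ cancellation, while the tail is controlled by $\|v\|_{C(-\tau,0;L^1(\omega))}$ once the base-point shift is absorbed via condition (4) of Lemma \ref{defisok}.

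\emph{Equicontinuity.} Translation invariance is the critical ingredient. For $y_1,y_2\in B_{r/2}$, write $V:=v(\cdot,t)$ and $h:=y_2-y_1$; translation invariance yields $I_kv(y_2,t)-I_kv(y_1,t)=I_k(V(\cdot+h))(y_1)-I_kV(y_1)$, and ellipticity bounds this by the Pucci operators $\cM^\pm_{\cL_0}$ applied to $W:=V(\cdot+h)-V$ at $y_1$. Since $V$ is quadratic on $B_r$, $W$ is affine on $B_{r/4}(y_1)$ when $|h|\le r/4$, so $\delta(W,y_1;z)=0$ on this local ball and only the tail contributes; the tail is dominated by $C(r)\|W\|_{L^1(\omega)}$, which vanishes as $h\to 0$ by continuity of translations in $L^1(\omega)$ (a density argument with $C_c(\R^n)$, using condition (4) of Lemma \ref{defisok} to dominate). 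For time increments, $I_kv(y,t)$ depends only on the spatial slice $v(\cdot,t)$, and $v(x,t_2)-v(x,t_1)=c(t_2-t_1)$ is constant in $x$ on $B_r$ by the form of a parabolic quadratic; thus the same sandwich argument gives a vanishing local part and a tail bounded by $\|v(\cdot,t_2)-v(\cdot,t_1)\|_{L^1(\omega)}$, which tends to $0$ as $t_2\to t_1^-$ by the very definition of $v\in C(-\tau,0;L^1(\omega))$.

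\emph{Conclusion.} Combining the spatial and time estimates, $\{I_k v\}$ is equicontinuous on $\bar B_{r/2}\times[-\tau/2,0]$ in the parabolic topology, and Arzelà--Ascoli delivers a uniformly convergent subsequence. The main technical hurdle is carrying out the Pucci tail estimates uniformly in $k$; this is handled cleanly because the kernels in $\cL_0$ are pointwise dominated by $(2-\sigma)\Lambda|z|^{-n-\sigma}$, which on $\R^n\setminus B_\rho$ is uniformly comparable to $\omega(z)$ with constants depending only on $\rho$ and $\sigma_0$, so the tails reduce to $L^1(\omega)$ norms of the perturbations as claimed.
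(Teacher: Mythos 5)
Your proof follows the same route as the paper: uniform boundedness via the Pucci sandwich $\cM^-_{\cL_0}v \le I_kv \le \cM^+_{\cL_0}v$, then equicontinuity (uniform in $k$) obtained by using translation invariance to write increments of $I_kv$ as $I_k$ applied to a difference, bounding again by Pucci operators, noting that the local part of $\delta$ vanishes because the polynomial difference is affine, controlling the tail by $L^1(\omega)$ norms, and finishing with Arzel\`a--Ascoli. The only cosmetic difference is that you split the increment into a pure spatial shift and a pure time shift (invoking density of $C_c$ and the weight condition to get $L^1(\omega)$-continuity of translations), whereas the paper performs a single joint space-time shift $\tau_{(y-x,s-t)}$ and controls it with the same two ingredients; your version is, if anything, slightly more explicit about the spatial translation estimate.
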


\begin{proof}
We prove that there is a uniform modulus of continuity with respect to the parabolic topology for $f_k$ in $B_{r/2}\times(-\t/2,0]$ and use Arzela-Ascoli to get the uniform convergence up to a subsequence. The functions $f_k$ are uniformly bounded because $v_{t^-}$ is constant in $B_{r/2}\times(-\t,0]$ and from $|I_kv(x,t)| \leq |I0(x,t)| + |\cM^+_{\cL_0}v(x,t)|$, we use that the right hand side is a continuous function varying over a compact set.

Looking at the difference for $(x,t), (y,s) \in B_{r/2}\times(-\t,0]$ with $|x-y|\leq r/8$ and $t\leq s$.
\begin{align*}
f_k(x,t) - f_k(y,s) &= (v_t - I_kv)(x,t) - (v_t - I_kv)(y,s),\\
&\leq (I_kv - I_k\t_{(y-x,s-t)}v)(x,t),\\
&\leq \cM^+_{\cL_0}(v-\t_{(y-x,s-t)}v)(x,t),\\
&\leq C\int_{\R^n \sm B_{r/4}}\frac{|\d(v-\t_{(y-x,s-t)}v,x,t;z)|}{|z|^{n+\s}}dz,\\
&\leq C(r)(|v(x,t)-v(y,s)| + \|v(\cdot+x,t)-v(\cdot+y,s)\|_{L^1(\w)})\\
&\leq C(r)(|v(x,t)-v(y,s)| + \|v(\cdot,t)-v(\cdot,s)\|_{L^1(\w)}).
\end{align*}
Which gives us the desired modulus of continuity for $(x,t), (y,s) \in B_{\r/2}\times(-\t/2,0]$ with $|x-y|\leq \r/8$ and $s\to t^-$.
\end{proof}

\begin{theorem}[Compactness]\label{compactness}
Let $I_k$ be a sequence of operators, elliptic with respect to $\cL_0$ in $\W\times(-T,0]$. Then there is a subsequence $I_{k_j}$ that converges weakly in $\W\times(-T,0]$ to another operator $I$ also elliptic with respect to $\cL_0$ in $\W\times(-T,0]$.
\end{theorem}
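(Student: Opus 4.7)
The strategy is to reduce to the translation-invariant compactness already obtained in Lemma \ref{comp1} by freezing coefficients at a dense set of base points, then extract a convergent subsequence via a diagonal argument and extend the limit to a global operator on $\W \times (-T, 0]$. First I would fix a countable dense family $\{(v_m, C_m)\}_{m \geq 1} \subset S$ of test functions (quadratic parabolic polynomials with rational coefficients on rational parabolic cylinders, equipped with tails drawn from a countable $C(\cdot;L^1(\w))$-dense set) and a countable dense set $D = \{(y_\ell, s_\ell)\}_{\ell \geq 1} \subset \W \times (-T, 0]$.

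For each $\ell$, the frozen operator $J_k^\ell := I_k(\cdot, y_\ell, s_\ell)$ is continuous, translation invariant, and elliptic with respect to $\cL_0$, as discussed in the preliminaries. Applying Lemma \ref{comp1} to the sequence $\{J_k^\ell\}$ against the test function $v_m$ produces a subsequence along which $J_k^\ell v_m$ converges uniformly on a sub-cylinder. Diagonalizing over all pairs $(\ell, m)$ yields a single subsequence $I_{k_j}$ such that $J_{k_j}^\ell v_m$ converges uniformly for every $(\ell, m)$. Since $I_{k_j} v_m(y_\ell, s_\ell) = J_{k_j}^\ell v_m(y_\ell, s_\ell)$, this produces a candidate value $Iv_m(y_\ell, s_\ell)$ at each $\ell$ and $m$.

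I would then construct the limit operator as follows. For each $(y_0, s_0) \in D$, the pointwise limits, combined with the ellipticity bound $|J_k^\ell u - J_k^\ell v| \leq \max(|\cM^\pm_{\cL_0}(u - v)|)$, extend by density to a continuous, translation invariant, $\cL_0$-elliptic operator $J^{(y_0, s_0)}$. For $(y_0, s_0) \notin D$, a further diagonal extraction along any sequence $(y_{\ell_m}, s_{\ell_m}) \to (y_0, s_0)$ in $D$, again via Lemma \ref{comp1}, produces $J^{(y_0, s_0)}$. Set $Iv(y_0, s_0) := J^{(y_0, s_0)} v(y_0, s_0)$. Ellipticity of $I$ with respect to $\cL_0$ is then preserved by passing to the limit in the inequalities $\cM^-_{\cL_0}(u - v) \leq I_{k_j} u - I_{k_j} v \leq \cM^+_{\cL_0}(u - v)$ at each base point, and continuity of $I$ will follow from Lemma \ref{comp1} applied to the limit once weak convergence is established.

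The hard part will be to upgrade pointwise convergence of $I_{k_j} v$ at the dense base points $D$ to the \emph{uniform} convergence on $B_{\r/2}(x)\times(t-\t/2,t]$ required by the weak convergence definition. In the translation invariant setting of Lemma \ref{comp1} uniform equicontinuity of $\{I_k v\}$ came essentially for free from the identity $I_k v(y, s) = I_k(\t_{(y - x, s - t)} v)(x, t)$, which converts a shift of the evaluation point into a shift of the test function and is then handled by ellipticity. For non-translation-invariant operators this identity fails, and uniform equicontinuity instead has to be pieced together from the translation-invariant estimates at each frozen point $(y_\ell, s_\ell) \in D$, the continuity of each individual $I_{k_j}$, and a covering/ellipticity argument bounding $|I_{k_j} v(y, s) - I_{k_j} v(y_\ell, s_\ell)|$ in terms of quantities that do not depend on $j$. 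This is the delicate step and the main technical heart of the proof.
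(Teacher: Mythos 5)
Your route and the paper's are genuinely different. The paper does \emph{not} freeze coefficients: it chooses a countable dense family $\{(v_i,B_{r_i}(x_i)\times(t_i-\tau_i,t_i])\}$ of test functions (rational parabolic polynomials on rational cylinders glued to a dense sequence in $C(-T,0;L^1(\w))$), applies Lemma \ref{comp1} \emph{directly} to the sequence $\{I_kv_i\}_k$ for each fixed $i$, and diagonalizes to get uniform convergence on all the sub-cylinders at once. The remaining work is the extension to a general $(v,B_r(x)\times(t-\tau,t])\in S$: the paper bounds $|I_kv-I_kv_i|$ by $\sup_{L\in\cL}|L(v-v_i)|$ via ellipticity, splits the integral into $B_r(x)$ and its complement, controls the near piece by the uniform integrability of $\cL_0$ kernels (once the polynomial coefficients of $v_i$ are close to those of $v$) and the far piece by $\|v-v_i\|_{C(-T,0;L^1(\w))}$, and deduces a Cauchy estimate uniform over the sub-cylinder. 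The operator is then extended to $C^{1,1}(x)\cap L^1(\w)$ functions by sandwiching with time-independent polynomial caps and checking the limit is independent of the cap. These extension steps are essential and your proposal only gestures at them (``extend by density'').

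Your alternative of freezing at a countable dense set of base points and applying Lemma \ref{comp1} to each frozen family is a reasonable instinct, but it has the genuine gap you yourself flag, and that gap is fatal as written. Pointwise convergence of $I_{k_j}v$ on a dense set of $(y_\ell,s_\ell)$ does not upgrade to uniform convergence on a sub-cylinder without equicontinuity of $\{I_{k_j}v\}_j$ in the evaluation point, and for non-translation-invariant $I_k$ the identity $I_kv(y,s)=I_k(\tau_{(y-x,s-t)}v)(x,t)$ that produced equicontinuity in the proof of Lemma \ref{comp1} is unavailable. Neither the continuity of each individual $I_k$ nor ellipticity with respect to $\cL_0$ controls $|I_kv(y,s)-I_kv(y',s')|$ uniformly in $k$; that would need something like uniform equicontinuity of the kernels (condition (2) of Lemma \ref{defisok}), which is not among the hypotheses. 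So the ``delicate step'' you name is not merely delicate, it is missing. It is fair to observe that the paper's own direct appeal to Lemma \ref{comp1} is itself somewhat terse here, since that lemma is stated for translation-invariant operators while Theorem \ref{compactness} does not assume translation invariance; but whichever way one resolves that, your proposal still lacks both the equicontinuity bridge and the concrete $v_i\to v$ and $C^{1,1}$ extension arguments that the paper carries out.
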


\begin{proof}
First note that $C(-T,0;L^1(\w))$ is a separable space and there exists a dense sequence $\{u_i\}_{i\geq 1} \ss C(-T,0;L^1(\w))$. Let also $\{p_j\}_{j\geq1}$ be a numeration of all possible quadratic parabolic polynomials with rational coefficients. By a diagonal argument we can construct a sequence $\{(v_i,B_{r_i}(x_i)\times(t_i-\t_i,t_i])\}_{i\geq1} \ss S$ such that it covers all possible functions of the form
\begin{align*}
v = \begin{cases}
p_j &\text{ in $B_{r}(x)\times(t-\t,t]$},\\
u_i &\text{ in $B_{r}(x)\times(t-\t,t]$},
\end{cases}
\end{align*}
where $r$, $x$, $\t$ and $t$ vary over all possible rational numbers.

For each $v_i$ we can apply Lemma \ref{comp1} and, after a diagonal argument, extract a subsequence still denoted by $I_k$ such that for every $i$, $\{I_kv_i\}_k$ converges uniformly in $B_{r_i/2}(x_i)\times(t_i-\t_i/2,t_i]$ to some continuous function in $B_{r_i/2}(x_i)\times(t_i-\t_i/2,t_i]$, which we will denote $Iv_i(x,t)$.

We need to show that for any other $(v,B_r(x)\times(t-\t,t]) \in S$, $I_kv$ also converges uniformly in $B_{r/2}(x)\times(t-\t/2,t]$ to some continuous function in $B_{r/2}(x)\times(t-\t/2,t]$, that we will denote $Iv(x,t)$. Finally, we will need to show that $I$, which it is a priori defined on $S$, can be extended to be an elliptic operator with respect to $\cL$ in $\W\times(-T,0]$.

Let $(v,B_r(x)\times(t-\t,t]) \in S$. For $\e>0$ fixed we want to show that there exists some $k_0$ such that for every $k,j \geq k_0$ and every $(y,s) \in B_{r/2}(x)\times(t-\t/2,t]$
\begin{align*}
|I_kv(y,s) - I_jv(y,s)| < \e.
\end{align*}
We proceed by the triangular inequality using the sequence $\{(v_i,B_{r_i}(x_i)\times(t_i-\t_i,t_i])\}_{i\geq1}$ of the test functions defined above. Let $(v_i,B_{r_i}(x_i)\times(t_i-\t_i,t_i])$ such that $B_r(x)\times(t-\t,t] \ss B_{r_i}(x_i)\times(t_i-\t_i,t_i]$. We use the ellipticity to estimate, for any $(y,s) \in B_{r/2}(x)\times(t-\t/2,t]$,
\begin{align*}
|I_kv(y,s) - I_kv_i(y,s)| \leq \sup_{L \in \cL} |L(v-v_i)(y,s)|.
\end{align*}
If $L \in \cL$ has associated the kernel $K$ then,
\begin{align*}
|L(v-v_i)(y,s)| &\leq \int |\d(v-v_i,y,s;z)|K(z)dz,\\
&= \int_{B_r(x)} + \int_{\R^n \sm B_r(x)}.
\end{align*}
The first integral only sees the polynomials parts of $v$ and $v_i$ and can be made smaller that $\e/6$, uniformly in $L\in\cL$, if the coefficients of $v_i$ are sufficiently close to the coefficients of $v$, because of the uniform integrability condition of the kernels. The second integral can also be made smaller than $\e/6$ if $v_i$ is sufficiently close to $v$ in the $C(-T,0;L^1(\w))$ norm. We conclude that there exists some $v_i$ such that for every $k$ and any $(y,s) \in B_{r/2}(x)\times(t-\t/2,t]$,
\begin{align*}
|I_kv(y,s) - I_kv_i(y,s)| \leq \frac{\e}{3}.
\end{align*}
Now we choose $k_0$ such that for every $k,j \geq k_0$ and every $(y,s) \in B_{r_i/2}(x_i)\times(t_i-\t_i/2,t_i]$
\begin{align*}
|I_kv_i(y,s) - I_kv_i(y,s)| \leq \frac{\e}{3}.
\end{align*}
Then by the triangular inequality,
\begin{align*}
|I_kv(y,s) - I_jv(y,s)| &\leq |I_kv(y,s) - I_kv_i(y,s)| + |I_kv_i(y,s) - I_jv_i(y,s)|\\
&{} + |I_jv_i(y,s) - I_jv(y,s)|,\\
&\leq \e.
\end{align*}
Therefore $Iv(y,s)$ is a well defined continuous function in $B_{r/2}(x)\times(t-\t/2,t]$.

Moreover, the ellipticity gets also inherited for functions in $S$ by passing to the limit in the following expression,
\begin{align*}
\cM^-_\cL(v_1-v_2) \leq I_kv_1 - I_kv_2 \leq \cM^+_\cL(v_1-v_2). 
\end{align*}

For a function $u$ and a fixed point $(x,t)$, such that $u(\cdot,t) \in C^{1,1}(x) \cap L^1(\w)$, we have quadratic polynomials $p$ and $q$, independent of time, such that $p$ and $q$ touch $u$ by above  and below respectively at $x$ in $B_r(x)$. We would like to define $Iu(x,t)$ to be
\begin{align*}
Iu(x,t) = \limsup_{\r\to 0} Iv_{p,\r}(x,t)
\end{align*}
where $v_{p,\r} \in S$ is independent of time, such that
\begin{align*}
v_{p,\r} = \begin{cases}
p &\text{ in $B_\r(x)$},\\
u(\cdot,t) &\text{ in $\R^n \sm B_\r(x)$}.
\end{cases}
\end{align*}
All we need to check is that $Iu(x,t)$ is finite and does not depend on $p$.

$Iv_{p,\r}(x,t)$ is bounded by below by $Iv_{q,r}(x,t) > -\8$, because of the monotonicity of $I$ in $S$ (which follows from the ellipticity). In a similar way $Iv_{p,\r}(x,t)$ is also bounded by above by $Iv_{p,r}(x,t) < \8$. Therefore $\limsup_{\r\to 0} Iv_{p,\r}(x,t)$ is a finite number.

To check that the definition of $Iu(x,t)$ is independent of $p$ we need to see that for any other polynomial $P$, independent of time, such that $P$ also touches $u$ by above at $x$ in some neighborhood,
\begin{align*}
\limsup_{\r\to 0} Iv_{p,\r}(x,t) = \limsup_{\r\to 0} Iv_{P,\r}(x,t).
\end{align*}
By the ellipticity of $I$ in $S$,
\begin{align*}
|Iv_{p,\r}(x,t) - Iv_{P,\r}(x,t)| \leq \sup_{L\in\cL} |L(v_{p,\r}-v_{P,\r})(x,t)|.
\end{align*}
For a given $L\in\cL$ with kernel $K$,
\begin{align*}
|L(v_{p,\r}-v_{P,\r})(x,t)| \leq \int_{B_\r} |\d(p-P,x,t;y)|K(z)dz,
\end{align*}
which can be arbitrarily small, uniformly in $L$, because of the uniform integrability condition for the kernels. Therefore the definition of $Iu(x,t)$ is consistent and once again the ellipticity for $I$ gets inherited from the ellipticity for $I_k$.
\end{proof}


\begin{definition}[Norm of a non local operator]
Given a nonlocal operator $I$ with respect to $\w$ and $\W\times(-T,0]$ we define $\|I(t)\|$, for $t \in (-T,0]$ as
\begin{align*}
\|I(t)\| := &\sup\{|I(u,x,t)|/(1+M) : x \in \W, u(\cdot,t) \in L^1(\w) \cap C^{1,1}(x)\\
&\|u(\cdot,t)\|_{L^1(\w)} \leq M,\\
&|u(y,t) - u(x,t) - (y-x)\cdot u_x(x,t)| \leq M|y-x|^2 \text{ for } y\in B_1(x)\}.
\end{align*}
We also write $\|I\| := \sup_{t\in(-T,0]}\|I(t)\|$.
\end{definition}


\begin{theorem}[Stability by compactness]\label{stacomp}
For some $\s \geq \s_0 > 1$ and $\a < \s_0-1$ we consider operators $I_0$, $I_1$ and $I_2$ elliptic respect to $\cL_0(\s,\L)$ with $\w = 1/(1+|y|^{n+\s})$. Then, given $M > 0$, a modulus of continuity $\r$ and $\e > 0$, there is an $\eta > 0$ sufficiently small and $R>0$ sufficiently large so that if $I_0$, $I_1$ and $I_2$ satisfy
\begin{align*}
\|I_1 - I_0\| &\leq \eta,\\
\|I_2 - I_0\| &\leq \eta,\\
\end{align*}
the functions $u,v \in C(B_1\times[-1,0])\cap C(-1,0;L^1(\w))$ satisfy in the viscosity sense,
\begin{alignat*}{2}
v_t - I_0(v,x) &= 0 &&\text{ in $B_1\times[-1,0]$},\\
u_t - I_1(u,x) &\geq -\eta &&\text{ in $B_1\times[-1,0]$},\\
u_t - I_2(u,x) &\leq \eta &&\text{ in $B_1\times[-1,0]$},\\
u&=v &&\text{ in $((\R^n \sm B_1)\times[-1,0])\cup(B_1\times\{-1\})$},
\end{alignat*}
and for every $(x,t)\in (B_R\sm B_1)\times[-1,0] \cup (B_R\times\{-1\})$ and $y\in ((\R^n\sm B_1)\times[-1,0])\cup \R^n\times\{-1\}$.
\begin{align*}
|u(x,t) - u(y,s)| &\leq \rho(|x - y|\vee|t-s|),\\
|u(y,s)| &\leq M\max(|y|^{1+\a},1),
\end{align*}
then $|u-v|<\e$ in $B_1\times[-1,0]$.
\end{theorem}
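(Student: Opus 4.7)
The plan is a compactness-by-contradiction argument that combines Theorems \ref{bddryreg}, \ref{stability} and \ref{compactness}. Suppose the conclusion fails. Then there exist $M$, a modulus $\r$, $\e>0$, and for each $k\ge1$ a triple of operators $(I_0^k,I_1^k,I_2^k)$ with $\|I_1^k-I_0^k\|,\|I_2^k-I_0^k\|\le 1/k$, a radius $R_k\uparrow\8$, and functions $u^k,v^k$ satisfying all stated hypotheses with these constants, yet $\|u^k-v^k\|_{L^\8(B_1\times[-1,0])}\ge\e$. By Theorem \ref{compactness}, a subsequence of $\{I_0^k\}$ converges weakly to an operator $I_0$ elliptic with respect to $\cL_0$. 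For any test function $(w,B_r(x)\times(t-\t,t])\in S$, the slice $w(\cdot,s)$ is uniformly a quadratic polynomial plus a fixed $L^1(\w)$ tail, hence admissible in the definition of $\|\cdot\|$; so $\|I_j^k-I_0^k\|\le 1/k$ forces $(I_j^k-I_0^k)w\to0$ uniformly on $B_{r/2}(x)\times(t-\t/2,t]$. Combined with the weak convergence of $I_0^k$, this shows $I_1^k, I_2^k\to I_0$ weakly as well.

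Next I would extract a compact family of solutions. The boundary data $u^k=v^k$ on the parabolic boundary of $B_1\times[-1,0]$ has modulus $\r$ and is bounded by $M$, so the maximum principle gives a uniform $L^\8(B_1\times[-1,0])$ bound. Applying Theorem \ref{bddryreg} to both $u^k$ and $v^k$ (which by ellipticity satisfy $w_t-\cM^+_{\cL_0}w\le 1/k$ and $w_t-\cM^-_{\cL_0}w\ge -1/k$, with $C_0=1/k$ uniformly bounded) yields a uniform modulus of continuity $\bar\r$ on $\bar B_1\times[-1,0]$, independent of $k$. Arzela-Ascoli then gives a further subsequence with $u^k\to u$ and $v^k\to v$ uniformly on $\bar B_1\times[-1,0]$. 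Outside $B_1$ the boundary values $u^k=v^k$ have uniform modulus $\r$ on $(B_{R_k}\sm B_1)\times[-1,0]\cup(B_{R_k}\times\{-1\})$ and satisfy the growth bound $|u^k(y,s)|\le M\max(|y|^{1+\a},1)$; since $\a<\s_0-1\le\s-1$, the majorant $M|y|^{1+\a}\w(y)$ is integrable, providing a uniform tail bound. Passing to a further subsequence and applying dominated convergence on each compact annulus, we obtain $u^k\to u$ and $v^k\to v$ in $C(-1,0;L^1(\w))$.

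Now I would pass to the limit. Uniform convergence on compact subsets of $\R^n\times[-1,0]$ implies $\G$-convergence; together with the $C(-1,0;L^1(\w))$ convergence of the functions, the weak convergence $I_j^k\to I_0$, and $f^k=\pm 1/k\to 0$ uniformly, Theorem \ref{stability} applied to the sub- and supersolution inequalities yields
\begin{align*}
u_t-I_0u=0\ \text{and}\ v_t-I_0v=0 \ \text{in}\ B_1\times(-1,0],
\end{align*}
with $u=v$ on $((\R^n\sm B_1)\times[-1,0])\cup(B_1\times\{-1\})$. By uniqueness for the Dirichlet problem associated with $I_0$ (inherited from ellipticity with respect to $\cL_0$, as noted in the remark preceding Theorem \ref{stability}), $u\equiv v$ in $B_1\times[-1,0]$, contradicting $\|u-v\|_{L^\8(B_1\times[-1,0])}\ge\e$.

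The main obstacle is securing $C(-1,0;L^1(\w))$ convergence of the exterior data, since the modulus $\r$ is only known on the growing ball $B_{R_k}$ while outside we have only polynomial growth. This is precisely where the hypotheses $R=R_k\uparrow\8$ and $\a<\s_0-1$ enter: the first provides equicontinuity on every fixed annulus, and the second guarantees that the growth $|y|^{1+\a}$ is integrable against $\w(y)=1/(1+|y|^{n+\s})$, legitimizing the dominated convergence step that promotes pointwise convergence to convergence in $C(-1,0;L^1(\w))$.
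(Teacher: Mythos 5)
Your proof is correct and follows essentially the same route as the paper: a contradiction argument built on Theorem \ref{compactness} for weak convergence of the operators, boundary regularity (Theorem \ref{bddryreg}) for equicontinuity of $u^k,v^k$ on growing compact sets, Arzela--Ascoli plus dominated convergence (using $\a<\s_0-1$ so that $|y|^{1+\a}\w(y)\in L^1$) to upgrade to $C(-1,0;L^1(\w))$ convergence, the Stability Theorem \ref{stability} to pass to the limit equation, and uniqueness to reach a contradiction. The only difference is that you spell out the easy justification, which the paper leaves implicit, that $\|I_j^k-I_0^k\|\to 0$ forces $I_1^k,I_2^k$ to converge weakly to the same limit $I_0$.
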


\begin{proof}
Assume that for some $\e>0$ the result does not hold. Then there is a sequence $R_k$, $I^k_0$, $I^k_2$, $I^k_2$, $\eta_k$, $u_k$, $v_k$, $I_k$, $f_k$ such that $R_k \to \8$, $\eta_k \to 0$, all the assumptions of the lemma are valid but $\sup_{B_1\times[-1,0]}|u_k - v_k| \geq \e$.

By Lemma \ref{compactness} we can assume that $I^k_0 \to I$ weakly. Because $I^k_1$, $I^k_2$ get closer and closer in norm to $I^k_0$ we also get that they converge weakly to $I$. 

By the boundary regularity we get that each $u_k$ and $v_k$ is not only equicontinuous in $((\R^n \sm B_1)\times[-1,0])\cup(B_1\times\{-1\})$ but also in $B_{R_k} \times[-1,0]$. By Arzela-Ascoli we can extract a subsequence (denoted the same) that converges uniformly on compact sets to functions $u$ and $v$ respectively. By dominated convergence theorem, $u_k$ and $v_k$ also converge in $C(-1,0;L^1(\w))$. Indeed,
\begin{align*}
\sup_{t\in(-1,0]} \|u_k(\cdot,t) - u(\cdot,t)\|_{L^1(\w)} &= \int \frac{\sup_{t\in(-1,0]}|u_k(y,t) - u(y,t)|}{1+|y|^{n+\s}}dy,
\end{align*}
and the integrand is dominated by
\begin{align*}
\frac{\sup_{t\in(-1,0]}|u_k(y,t) - u(y,t)|}{1+|y|^{n+\s}} \leq \frac{2M\max(|y|^{1+\a},1)}{1+|y|^{n+\s}} \in L^1.
\end{align*}

Then by the Stability Theorem \ref{stability}, $u$ and $v$ solve the same equation $w_t - Iw = 0$ with the same boundary data therefore by maximum principle $u = v$ and this contradicts that $\sup_{B_1\times[-1,0]}|u_k - v_k| \geq \e$.
\end{proof}


\section{$C^{1,\a}$ Estimates}\label{C1a section}

In this section we prove parabolic $C^{1,\a}$ estimates for equations that are close to be translation invariant. In particular, by applying this at very small scales, the regularity also applies for equations where the operator is continuous.

The strategy consists in proving an improvement of flatness for the graph of the solution $u$ and iterate it at smaller scales. This requires that the norm operator $I$ remains bounded uniformly on the small scales. For this we introduce the norm at scale $\s$ of $I$,
\begin{align*}
\|I(t)\|_\s=\sup\limits_{\b<1}\left\|I_\b(t)\right\|,
\end{align*}
where $I_\b u(x,t) = \b^\s (Iu(\b^{-1}\cdot, \b^{-\s}\cdot))(\b x,\b^\s t)$. Notice that by rescaling $I$, also the domain in $\R^n\times\R$ where $I$ is defined gets rescaled and then the norm $\|I_\b\|$ has to be computed with respect to such domain.
 

\begin{theorem}\label{C1a theorem}
Assume $\s>\s_0>1$. Let $I^{(0)}$ be a fixed translation invariant nonlocal operator in a class $\cL\ss\cL_0$ with scale $\s$, such that the equation $v_t-I^{(0)}v=0$ has interior $C^{1,\bar \a}$ estimates in space and time given that the boundary and initial data is regular enough (See Theorems \ref{C1aold} and \ref{regularityint}). Let $I^{(1)}$ and $I^{(2)}$ be two nonlocal operators, elliptic with respect $\cL_0$, translation invariant only in time and assume that
\begin{align*}
\left\|I^{(0)}-I^{(j)}\right\|_\s<\eta,\ \ j=1,2
\end{align*}
for some small $\eta$.

Let $u$ be a bounded function that solves 
\begin{align*}
u_t-I^{(1)}u\leq f_1(x,t) \text{ in } B_1\times(-1,0]\\
u_t-I^{(2)}u\geq f_2(x,t) \text{ in } B_1\times(-1,0]
\end{align*}
for a couple of bounded functions $f_1, f_2$. Furthermore, assume that $u$ is Lipschitz in time in $\R^n\sm B_1\times(-1,0]$ and that for $u_0(x)=u(x,-1)$, $|\cM^\pm_{\cL_0}u_0|$ is bounded in $B_1$.

Then for $\a<\min(\bar \a,\s_0-1)$ we have that $u$ is $C^{1,\a}(B_{1/2}\times(-1/2,0])$ in all of its variables. Moreover we have for $(x,t), (y,s) \in B_{1/2}\times(-1/2,0]$,
\begin{align*}
\frac{|u_{x_i}(x,t)-u_{x_i}(y,s)|}{(|x-y|+|t-s|^{1/\s})^\a} &\leq C\1\|u\|_{L^\8(\R^n\times[-1,0])} +\|u_t\|_{L^\8(\R^n\sm B_1\times[-1,0])} \right.\\
&\left. {} + \|\cM^\pm_{\cL_0}u_0\|_{L^\8(B_1)} + \|f_1\|_{L^\8((-1,0])} + \|f_2\|_{L^\8((-1,0])}\2
\end{align*}
for $i=1,\ldots,n+1$, where we use the convention $x_{n+1}=t$.
\end{theorem}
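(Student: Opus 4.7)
The plan is to run a Caffarelli--Silvestre style perturbative argument adapted to the parabolic nonlocal setting, using Theorem~\ref{stacomp} as the compactness engine and iterating at geometric scales; the scale-$\s$ norm is what allows the smallness $\|I^{(0)}-I^{(j)}\|_\s<\eta$ to survive rescaling. First I would normalize so that $\|u\|_\8$, $\|u_t\|_{L^\8(\R^n\sm B_1\times[-1,0])}$, $\|\cM^\pm_{\cL_0} u_0\|_{L^\8(B_1)}$ and $\|f_i\|_\8$ are all smaller than a small $\eta$ to be fixed. Theorem~\ref{bddryreg} then supplies a universal modulus of continuity for $u$ on $(B_R\sm B_1)\times[-1,0]\cup(B_R\times\{-1\})$, and Theorem~\ref{regularityint} gives Lipschitz-in-time control on the same set.

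The core of the argument is an improvement-of-flatness lemma: \emph{there exist universal $\r\in(0,1)$, $\eta_0>0$ and an affine-in-space function $\ell$ with $|\ell|+|\n\ell|\le C$ such that, under the above normalizations with $\eta\le\eta_0$, $|u-\ell|\le \r^{1+\a}$ on $B_\r\times(-\r^\s,0]$.} I would prove it by contradiction. Given sequences $(u_k,I^{(0)}_k,I^{(1)}_k,I^{(2)}_k,f_k^i,\eta_k)$ with $\eta_k\to 0$ violating the lemma, by Theorem~\ref{compactness} pass to a weak limit $I^{(0)}_k\to I$, which is elliptic and translation invariant. Let $v_k$ solve $(v_k)_t-I^{(0)}_k v_k=0$ in $B_1\times[-1,0]$ with the same lateral and initial data as $u_k$. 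Theorem~\ref{stacomp} forces $u_k-v_k\to 0$ uniformly on $B_1\times[-1,0]$, and Arzel\`a--Ascoli (using the equicontinuity from Theorem~\ref{bddryreg}) together with Theorem~\ref{stability} produce a limit $v$ satisfying $v_t-Iv=0$ with regular boundary data. Theorem~\ref{C1aold} (spatial $C^{1,\bar\a}$) and Theorem~\ref{regularityint} ($C^{1,\bar\a}$ in time) yield an affine $\ell$ with $|v-\ell|\le C\r^{1+\bar\a}$ on $B_\r\times(-\r^\s,0]$; choosing $\r$ so that $C\r^{1+\bar\a}<\tfrac14\r^{1+\a}$ (using $\a<\bar\a$) contradicts the failure of the lemma.

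For the iteration, set $w_k(x,t)=\r^{-k(1+\a)}(u-\ell_k)(\r^k x,\r^{k\s}t)$, where $\ell_k$ aggregates the affine corrections produced at scales $1,\r,\ldots,\r^{k-1}$. Then $w_k$ satisfies the same system of inequalities with operators $I^{(j)}_{\r^k}$, and the scale-$\s$ norm hypothesis guarantees $\|I^{(0)}_{\r^k}-I^{(j)}_{\r^k}\|\le\eta$ uniformly in $k$. The restriction $\a<\s_0-1$ is used to verify that the exterior bound $|w_k(y,t)|\le C\max(|y|^{1+\a},1)$, the boundedness of $\|w_k(\cdot,t)\|_{L^1(\w)}$ and the Lipschitz-in-time bound on the exterior all survive the rescaling: the slope of $\ell_k$ grows like $\r^{-k\a}$, and $|y|^{1+\a}\w(y)$ is integrable only if $1+\a<\s_0$. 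Summing the telescoping series $\sum_k(\ell_{k+1}-\ell_k)$ (which is geometric with ratio $\r^\a$) yields a limit $\ell_\8(x,t)=a(t)+b(t)\cdot x$ with $|u(x,t)-\ell_\8(x,t)|\le C(|x|+|t|^{1/\s})^{1+\a}$ near the origin. Translating this pointwise expansion to every $(x_0,t_0)\in B_{1/2}\times(-1/2,0]$ and comparing at two points gives the stated H\"older bound for $u_{x_i}$; the case $i=n+1$ follows by combining the spatial $C^{1,\a}$ estimate with Theorem~\ref{regularityint}.

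The main obstacle I expect is not the compactness step, which is provided cleanly by Theorem~\ref{stacomp}, but the bookkeeping of the iteration: one must verify simultaneously that (i) the rescaled $w_k$ retains the exterior growth and integrability hypotheses used to invoke Theorem~\ref{stacomp}, (ii) the rescaled operators still satisfy the perturbation bound in $\|\cdot\|$, and (iii) the boundary regularity required of the limit $v$ by Theorems~\ref{C1aold} and~\ref{regularityint} is available after each rescaling. The constraint $\a<\min(\bar\a,\s_0-1)$ is exactly what balances these three requirements.
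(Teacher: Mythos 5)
Your proposal is correct and follows essentially the same iterative, perturbative scheme as the paper: normalize, establish time regularity via Theorem~\ref{regularityint}, build affine approximations at dyadic scales with the tail growth $|w_k|\leq|x|^{1+\a_1}$ controlled by the condition $\a<\s_0-1$, and use the compactness machinery to compare each rescaled $w_k$ to a solution of the translation-invariant model. The one cosmetic difference is that you wrap the comparison step inside an improvement-of-flatness lemma proved by a sequential contradiction argument (re-invoking Theorem~\ref{compactness} and Theorem~\ref{stability}), whereas the paper simply solves the auxiliary translation-invariant Dirichlet problem for $h$ at each scale and applies Theorem~\ref{stacomp} directly to get $|w_k-h|<\e$ -- this is slightly leaner since the $C^{1,\bar\a}$ estimate is already available for each $h$ without passing to a limit operator.
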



\begin{proof}
By scaling the problem, we can assume that 
\begin{align}
\label{cond00} &\|f_{j}\|_\8 < \eta/2, \ \ j=1,2\\
\label{cond0} &\|u\|_{L^\8(\R^n\times[-1,0])} +\|u_t\|_{L^\8(\R^n\sm B_1\times[-1,0])} + \|\cM^\pm_{\cL_0}u_0\|_{L^\8(B_1)} < 1,
\end{align}
and $u$ solves the equation in some large domain $B_{2R}\times(-(2R)^\s,0]$. Note also that by the hypothesis of the theorem and Theorem \ref{regularityint} $u$ is Lipschitz in time in $\R^n\times[-1,0]$ and $C^{1,\alpha^*}(B_{1/2}\times[-1/2,0])$ for some $\a^*$. Hence we only need to prove the estimates in space.

We will prove that there is $\l>0$ and a sequence of functions
\begin{align*}
l_k(x) = a_k +b_k\cdot x,
\end{align*}
such that
\begin{align}
\label{cond1} \sup\limits_{B_{\l^k\times(-\l^{\s k},0)}}|u-l_k|\leq \l^{k(1+\a)},\\
\label{cond2} |a_{k+1}-a_k|\leq C\l^{k(1+\a)},\\
\label{cond3} \l^k|b_{k+1}-b_k|\leq C\l^{k(1+\a)},
\end{align}
for some $\l \in (0,1)$ and $C>0$ universals. That would prove a geometric decay at the origin which implies the desired H\"older modulus of continuity in $B_{1/2}\times(-1/2,0]$ by a covering argument.

Let $l_0=0$. We will construct the sequence by induction. Assume that conditions \eqref{cond1} through \eqref{cond3} hold up some value $k$, we will see how to prove the step $k+1$.

Consider
\begin{align*}
w_k(x,t)=\frac{[u-l_k](\l^k x, \l^{k\s}t)}{\l^{k(1+\a)}}.
\end{align*}
Since $I$ has scale $\s$, then a simple computation tells us that $w_k$ solves an equation of the same ellipticity type with $I_k^{(1)} := I^{(1)}_{\l^{-k(1+\a)},\l^k}$, the rescaled operator, namely
\begin{align*}
(w_k)_t-I_k^{(1)}w_k \leq \l^{k(\s-1-\a)}f_1(\l^k x, \l^{k\s}t)< \eta,
\end{align*}
in an even larger domain ($\l<1$). The other inequality holds for $I^{(2)}_k$, defined similarly, replacing $f_1$ by $f_2$. Since $\s-1-\a\geq\s_0-1-\a>0$ the right hand sides becomes smaller as $k$ increases. Because $I^{(1)}$ and $I^{(2)}$ are close to $I$ at every scale we get that,
\begin{align*}
\left\|I^{(j)}_k-I_k^{(0)}\right\|\leq\left\|I^{(j)}-I^{(0)}\right\|<\eta, \text{ for $j=1,2$}.
\end{align*}
By the inductive hypothesis we also have that $|w_k|\leq 1$ in $B_1\times(-1,0)$, but we only have a control on the tail that deteriorates with every rescaling. Let $\a_1$ such that $\a<\a_1<\min\{\bar \a,\s_0\}$, we will show that we also can construct the sequence $l_k$ so that
\begin{align}\label{cond4}
w_k(x,t)\leq |x|^{1+\a_1}, \text{ in } \1(\R^n\sm B_1)\times[-1,0]\2 \cup \1\R^n\times\{-1\}\2.
\end{align}
Therefore we are also assuming \eqref{cond4} with the set of inductive hypothesis \eqref{cond1}, \eqref{cond2} and \eqref{cond3}. The initial case clearly holds because $u$ is bounded by one.

For $\e>0$ sufficiently small, to be fixed, we chose $\eta$ and $R$ from Theorem \ref{stacomp} so that $w_k(x,t)$ is H\"older continuous in $B_R\sm B_1\times[-1,0]\cup B_R\times\{-1\}$ and the constant does not degenerate with $k$, since $|w_k| < \max(1,|x|^{\alpha_1 + 1})$. Let $h$ such that 
\begin{align*}
h_t-I_k^{(0)}h&=0, \text{ in } B_1\times(-1,0],\\
h(x,t)&=w_k(x,t), \text{ in } \1\R^n\sm B_1\times[-1,0]\2 \cup \1B_1\times\{-1\}\2, 
\end{align*}
and note that thanks to Theorem \ref{stacomp} it satisfies $|w_k-h|<\e$ in $B_1\times[-1,0]$. Since $I^{(0)}_k$ is translation invariant and elliptic with respect to $\cL$, then $h$ has interior $C^{1,\bar \a}$ estimates in space (see \cite{CD}). Note also $w_k$ is Lipschitz in time in $\R^n\sm B_1\times[-1,0]$ and $\cM^{\pm}(w_k(x,-1))$ is bounded in $B_1$, $h$ also has interior $C^{1,\bar \a}$ estimates in time. Let 
\begin{align*}
\bar l_0(x,t) = \bar a +\bar b \cdot x+\bar ct, 
\end{align*}
be the linear part of $h$ at the origin. Since $h\leq 1+\e$ in $B_1\times[-1,0]$, then $\bar a \leq 1+\e$. By the interior estimates we also have that $\bar b, \bar c \leq C_1$, where $C_1$ is a universal constant. Denote now by $\bar l(x)=\bar a +\bar b \cdot x$, we have then
\begin{align*}
|h(x,t)-\bar l(x,t)| \leq C_2\1|x|^{1+\bar \a}+|t|\2,  \text{ in } B_{1/2}\times[-1/2,0],
\end{align*}
and notice that $C_2$ is a universal constant because both the boundary and initial data for $h$ are bounded by $\max(1,|x|^{\alpha_1 + 1})$.

Let $\bar w_k=w_k-\bar l$, we have the following estimates,
\begin{align*}
|\bar w_k(x,t)| &\leq |w_k-h|+|h-\bar l|\\
&\leq \e+C_2(|x|^{1+\bar \a}+|t|),   \text{ in } B_{1/2} \times[-(1/2)^\s,0],\\
|\bar w_k(x,t)| &\leq |w_k|+|\bar l|\\
&\leq 1+(1+\e)+C_1,  \text{ in } \1B_1\sm B_{1/2}\2\times[-(1/2)^\s,0],\\
|\bar w_k(x,t)| &\leq |w_k|+|\bar l|\\
&\leq |x|^{1+\a_1}+(1+\e) +C_1|x|, \text{ in } \1\R^n\sm B_1\2\times[-(1/2)^\s,0].
\end{align*}
Define
\begin{align*}
l_{k+1}(x)&=l_k(x)+\l^{k(1+\a)}\bar l(x/\l^k),\\
w_{k+1}(x,t)&=\frac{[u-l_{k+1}](\l^{k+1}x,\l^{(k+1)\s} t)}{\l^{(k+1)(1+\a)}}=\frac{\bar w_k(\l x, \l^\s t)}{\l^{1+\a}}.
\end{align*}
By taking $\eta$ sufficiently small and $R$ sufficiently large we can assume that $\e$ is such that $\e\leq\l^{1+\bar \a}$. The bounds for $w_{k+1}$ are then
\begin{align*}
|w_{k+1}(x,t)| &\leq \l^{\bar \a-\a}+C_2\l^{\bar \a-\a}|x|^{1+\bar \a}\\
&+C_2\l^{\s-1-\a}|t|, \text{ in } B_{\l^{-1}/2} \times[-(\l^{-1}/2)^\s,0],\\
|w_{k+1}(x,t)|&\leq \l^{\bar \a-\a} +(2+2C_1)\l^{-(1+\a)}, \text{ in } B_{\l^{-1}}\sm B_{\l^{-1}/2}\times[-(\l^{-1}/2)^\s,0],\\
|w_{k+1}(x,t)|&\leq \l^{\bar \a-\a}+\l^{-1-\a}+\l^{\a_1-\a}|x|^{1+\a_1}\\
&+C_1\l^{-\a}|x|, \text{ in } \R^n\sm B_{\l^{-1}}\times[-(\l^{-1}/2)^\s,0].
\end{align*}
Recall that $\s-1-\a>0$, hence we can make $\l^{\bar \a-\a}(1+C_2)+\l^{\s-1-\a}\leq 1$. Therefor we have $w_{k+1}\leq 1$ in $B_{\l^{-1}/2}\times[-(\l^{-1}/2)^\s,0]$, which in particular implies the bounds in the smaller domain $B_1\times[-1,0]$. We also want to check that these bounds allows us to get $|w_{k+1}(x)|\leq |x|^{1+\a_1}$ in $\R^n\sm B_1\times[-1,0]$.

Let $(x,t)\in B_{\l^{-1}/2}\sm B_1\times[-1,0]$, as mentioned before $w_{k+1}(x,t)\leq 1\leq |x|^{1+\a_1}$.
Now let $(x,t)\in B_{\l^{-1}}\sm B_{\l^{-1}/2}\times[-1,0]$, we have
\begin{align*}
|w_{k+1}(x)|&\leq \l^{\bar \a-\a} +(2+2C_1)\l^{-(1+\a)},\\
&\leq (\l^{-1}/2)^{1+\a_1},\\
&\leq |x|^{1+\a_1},
\end{align*}
which holds for $\l$ small enough. Finally let $(x,t)\in\R^n\sm B_{\l^{-1}}\times[-1,0]$, then
\begin{align*}
|w_{k+1}(x)| &\leq \l^{\bar \a-\a}+(\l^{-1-\a}+C_1\l^{-\a})+\l^{\a_1-\a}|x|^{1+\a_1}+C_1\l^{-\a}|x|,\\
&\leq (\l^{-1-\a}+C_1\l^{-\a}) + (1+|x|+|x|^{1+\a_1})/2,\\
&\leq |x|^{1+\a_1},
\end{align*}
again for $\l$ small enough.

Moreover we have
\begin{align*}
|a_{k+1}-a_k| &\leq 2\l^{k(1+\a)}\\
\l^k|b_{k+1}-b_k| &\leq C_1\l^{k(1+\a)}.
\end{align*}
Making $\l$ smaller we can get the desired conditions \eqref{cond2}, \eqref{cond3}.
Finally, we also have that
\begin{align*}
|u(x,t)-l_{k+1}(x)|&=|\l^{(k+1)(1+\a)}w_{k+1}(x/\l^{k+1},t/\l^{(k+1)\s})|\\
&\leq \l^{(k+1)(1+\a)}, \text{ in } B_{\l^{k+1}}\times[-\l^{(k+1)\s},0].
\end{align*}
This completes the proof.
\end{proof}
%

\subsection{Applications}

As in \cite{C2} we proceed to apply Theorem \ref{C1a theorem} to some particular cases. The first case would be an application to a linear operator with variable coefficients and would play the parabolic non local version of the classical Cordes-Nirenberg estimates.

\begin{corollary}\label{apl1}
Let $\s>1$, $f$ a bounded function and let $u$ be a bounded solution of 
\[
u_t-Iu=f, \ \text{in}\ B_1\times[-1,0],
\]
where $I$ is given by
\[
Iu=\int_{\R^n}\d u(x,y)\frac{(2-\s)a(x,y)}{|y|^{n+\s}}dy.
\]
Assume that $u$ is Lipschitz in time in $\R^n\sm B_1\times(-1,0]$, $|\cM^\pm_{\cL_0}u_0|$ is bounded in $B_1$, where $u_0(x)=u(x,-1)$. Suppose also that there is a some small $\eta$ such that the coefficients satisfy 
\[
|a(x,y)-a_0(y)|<\eta, \ \text{in} \ B_1,
\]
where $a_0$ is a bounded function such that the linear operator with kernel $K(y)=(2-\s)a_0(y)/|y|^{n+\s}$ is in $\cL_1$ (see Section \ref{VSP}). Then for any $\a<\s-1$ we have that $u$ is $C^{1,\a}(B_{1/2}\times[-1/2,0])$ in the spatial variable, provided $\eta$ is small enough. 
\end{corollary}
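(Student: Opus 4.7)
The plan is to reduce directly to Theorem \ref{C1a theorem} by taking $I^{(0)}$ to be the translation invariant operator with kernel $K_0(y)=(2-\s)a_0(y)/|y|^{n+\s}$, which lies in $\cL_1(\s,\L)$ by hypothesis, and setting $I^{(1)}=I^{(2)}=I$ with $f_1=f_2=f$. Then $I^{(0)}$ enjoys the interior $C^{1,\bar\a}$ estimates in space from Theorem \ref{C1aold} and in time from Theorem \ref{regularityint}. The operator $I$ itself is elliptic with respect to $\cL_0(\s,\L')$ for some $\L'$, because the smallness of $\eta$ keeps $a(x,y)$ within a range comparable to $a_0$ (which is already pinched between the $\cL_1$ ellipticity constants).

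The main technical point is to verify that $\|I^{(0)}-I\|_\s$ can be made as small as the threshold required by Theorem \ref{C1a theorem}. Since $I-I^{(0)}$ is linear with kernel $(2-\s)(a(x,y)-a_0(y))/|y|^{n+\s}$, the rescaled operator $(I-I^{(0)})_\b$ has kernel $(2-\s)(a(\b x,\b y)-a_0(\b y))/|y|^{n+\s}$ for $\b\in(0,1)$. The hypothesis $|a(x,y)-a_0(y)|<\eta$ on $B_1$ yields $|a(\b x,\b y)-a_0(\b y)|<\eta$ whenever $\b x\in B_1$, and in particular on $B_1\subseteq B_{\b^{-1}}$. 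For any admissible test function $u$ with $\|u(\cdot,t)\|_{L^1(\w)}\leq M$ and $|u(y,t)-u(x,t)-(y-x)\cdot u_x(x,t)|\leq M|y-x|^2$ for $y\in B_1(x)$, splitting the kernel integral at $B_1$ gives a near piece bounded by $\eta M(2-\s)\int_{B_1}|y|^{2-n-\s}dy\leq C\eta M$, while the tail piece is controlled by $C\eta(\|u(\cdot,t)\|_{L^1(\w)}+|u(x,t)|)\leq C\eta(1+M)$. Hence $\|I^{(0)}-I\|_\s\leq C\eta$, uniformly in $\b$.

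Given $\a<\s-1$, pick $\s_0\in(1,\s)$ with $\a<\s_0-1$ and $\a<\bar\a(\s_0)$. Theorem \ref{C1a theorem} then supplies a threshold $\eta'>0$; shrinking $\eta$ so that $C\eta<\eta'$ produces $u\in C^{1,\a}(B_{1/2}\times[-1/2,0])$ in the spatial variable with the quantitative estimate in terms of $\|u\|_\8$, $\|u_t\|_{L^\8(\R^n\sm B_1\times[-1,0])}$, $\|\cM^\pm_{\cL_0}u_0\|_{L^\8(B_1)}$ and $\|f\|_\8$. The verification of the hypotheses of Theorem \ref{C1a theorem} about boundary and initial regularity of $u$ is immediate from the assumptions of the corollary.

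The only subtlety worth flagging is the uniformity in $\b$ of the bound $\|I^{(0)}-I\|_\s\leq C\eta$. It works because both the singular factor $(2-\s)/|y|^{n+\s}$ and the pointwise coefficient inequality are preserved verbatim under the parabolic rescaling $(x,t)\mapsto(\b x,\b^\s t)$; no factor of $\b$ appears in the rescaled coefficient, so the estimate is genuinely scale invariant and Theorem \ref{C1a theorem} applies at every scale as required by the iteration in its proof.
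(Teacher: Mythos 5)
Your proposal is correct and takes the same route as the paper: set $I^{(0)}=L_{K_0}$ with $K_0(y)=(2-\s)a_0(y)/|y|^{n+\s}\in\cL_1$, take $I^{(1)}=I^{(2)}=I$ and $f_1=f_2=f$, verify $\|I-I^{(0)}\|_\s\leq C\eta$, and invoke Theorem \ref{C1a theorem}. The only difference is that you make explicit the norm computation that the paper outsources to \cite{C2}: the observation that the rescaled kernel $(2-\s)(a(\b x,\b y)-a_0(\b y))/|y|^{n+\s}$ inherits the same pointwise bound $\eta(2-\s)/|y|^{n+\s}$ without any factor of $\b$, and the split of the integral at $B_1$, are exactly what makes the estimate scale invariant. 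Your remark about choosing $\s_0\in(1,\s)$ with $\a<\min(\bar\a(\s_0),\s_0-1)$ is a sensible precision that the paper's statement glosses over. One small imprecision you share with the cited source: in the tail estimate, the term $|u(x,t)|$ arising from $-2u(x,t)$ in $\d$ is not directly controlled by the normalization $\|u(\cdot,t)\|_{L^1(\w)}\leq M$ in the definition of $\|I(t)\|$; this is handled in \cite{C2} by the way the operator norm is set up, and does not affect the correctness of the argument, but it is worth noting that the bound $|u(x,t)|\leq C(1+M)$ is not literally a consequence of the constraints as written.
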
 
\begin{proof}
First note that the operator $I$ is translation invariant in time. Now define
\[
I^{(0)}u=(2-\s)\int_{\R^n}\d u(x,y)\frac{a_0(y)}{|y|^{n+\s}}dy,
\]
and note that is translation invariant in space, hence $\partial_t -I^{0}$ has a priori $C^{1,\a}$ estimates (see \cite{CD}). As in \cite{C2}, we can estimate $\left\|I-I^{0}\right\|_\s\leq C\eta$ for some universal constant $C$. Applying Theorem \ref{C1a theorem} we conclude the desired result.
\end{proof}

Another application would be on non linear equations with variable coefficients, using the interior estimates derived in \cite{CD}. A precise example of this would be to consider the non translation invariant operator
\[
Iu=\inf\limits_\a\sup\limits_\b\int_{\R^n}\d u(x,y)\frac{(2-\s)a_{\a\b}(x,y)}{|y|^{n+\s}},
\]
where we ask the coefficients the following conditions
\begin{align*}
|a_{\a\b}-a_0(y)|<\eta, \ \forall \ \a,\b,\\
\L^{-1}\leq a_0(y)\leq \L, \\
|\nabla a_0(y)|\leq C|y|^{-1},
\end{align*}
for a universal $C$ and some bounded function $a_0$. A precise statement of this corollary is the following.
\begin{corollary}\label{apl2}
Assume that $u$ is Lipschitz in time in $\R^n\sm B_1\times(-1,0]$, $|\cM^\pm_{\cL_0}u_0|$ is bounded in $B_1$, where $u_0(x)=u(x,-1)$ and let $\s>\s_0>1$. Let $I^{0}$ be a translation invariant operator in the class $\cL_1$ and let $I$ be a uniformly elliptic operator with respect to $\cL_0$, translation invariant in time. Suppose $u$ solves
\[
u_t-Iu=f \ \text{in} \ B_1\times[-1,0]
\]
for some bounded $f$ and assume that $\left\|I-I^{0}\right\|_\s<\eta$ for every $x\in B_1$ and for some small $\eta$. Then $u$ is $C^{1,\a}(B_{1/2}\times[-1/2,0])$ in space.
\end{corollary}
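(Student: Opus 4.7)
The plan is to reduce the statement directly to Theorem \ref{C1a theorem} by taking $I^{(1)} = I^{(2)} = I$, $f_1 = f_2 = f$, and keeping the given $I^{(0)}$. Since $u$ is assumed to be a genuine (viscosity) solution of $u_t - Iu = f$, the two one-sided inequalities $u_t - I^{(1)}u \le f$ and $u_t - I^{(2)}u \ge f$ both hold automatically. Thus the main work is verifying the remaining structural hypotheses of Theorem \ref{C1a theorem}.

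First I would check the hypotheses on $I^{(0)}$. By assumption $I^{(0)} \in \cL_1 \subseteq \cL_0$ and is translation invariant in space and time, so Theorem \ref{C1aold} provides interior $C^{1,\bar\a}$ spatial estimates for solutions of $v_t - I^{(0)} v = 0$ in terms of a universal $\bar\a \in (0,1)$. Combining this with Theorem \ref{regularityint} yields the corresponding $C^{1,\bar\a}$ estimates in time, provided the boundary data is Lipschitz in time on $\R^n \setminus B_1$ and $|\cM^{\pm}_{\cL_0} u_0|$ is bounded on $B_1$ — which is exactly what is assumed of $u$ in the corollary. Next, the hypothesis $\|I - I^{(0)}\|_\s < \eta$ is precisely the closeness condition $\|I^{(j)} - I^{(0)}\|_\s < \eta$ needed in Theorem \ref{C1a theorem}, and the required ellipticity of $I^{(1)} = I^{(2)} = I$ with respect to $\cL_0$, as well as translation invariance in time, are exactly the standing assumptions on $I$.

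Having verified all hypotheses, Theorem \ref{C1a theorem} applies for any $\a < \min(\bar\a, \s_0 - 1)$ and yields
\begin{align*}
\frac{|u_{x_i}(x,t) - u_{x_i}(y,s)|}{(|x-y| + |t-s|^{1/\s})^\a}
\le C\left(\|u\|_{L^\8} + \|u_t\|_{L^\8(\R^n\sm B_1 \times[-1,0])} + \|\cM^{\pm}_{\cL_0} u_0\|_{L^\8(B_1)} + \|f\|_{L^\8}\right)
\end{align*}
for $(x,t),(y,s) \in B_{1/2}\times(-1/2,0]$, which is exactly the claimed $C^{1,\a}$ regularity in space (and in fact also in time).

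There is essentially no obstacle here beyond bookkeeping: the corollary is a direct specialization of the main theorem. The only mild subtlety is that Theorem \ref{C1a theorem} is stated with two auxiliary operators $I^{(1)}, I^{(2)}$ flanking $I^{(0)}$, while here we have a single $I$ — but since the hypotheses only use $I^{(j)}$ through the one-sided inequalities and the norm bound, collapsing $I^{(1)} = I^{(2)} = I$ is automatic for an exact solution.
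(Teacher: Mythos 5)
Your proposal is correct and is exactly the argument the paper intends: the paper omits an explicit proof for this corollary, treating it as a direct specialization of Theorem \ref{C1a theorem} with $I^{(1)}=I^{(2)}=I$ and $f_1=f_2=f$, which is precisely what you spell out. The verification that $I^{(0)}\in\cL_1$ yields the required a priori $C^{1,\bar\a}$ estimates via Theorems \ref{C1aold} and \ref{regularityint} is the right bookkeeping.
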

\begin{remark}
In both Corollaries \ref{apl1} and \ref{apl2} we have also that $u$ is $C^{1,\a}$ in time thanks to the hypothesis on the boundary data.
\end{remark}

\end{document}